\documentclass[reqno]{amsart}

\usepackage{amscd,amsmath,amssymb,amsfonts}
\usepackage{mathrsfs}
\usepackage[dvips]{color} 
\usepackage[all]{xy}

\numberwithin{equation}{section}

\theoremstyle{plain}
    \newtheorem{thm}{Theorem}[section]
    \newtheorem{lem}[thm]{Lemma}
    
    \newtheorem{prop}[thm]{Proposition}
    \newtheorem{cor}[thm]{Corollary}

\theoremstyle{definition}

    \newtheorem{rem}[thm]{Remark}

\begin{document}
\title{CM Periods, CM Regulators and Hypergeometric Functions, II}
\author{Masanori Asakura and Noriyuki Otsubo}
\address{Department of Mathematics, Hokkaido University, Sapporo, 060-0810 Japan}
\email{asakura@math.sci.hokudai.ac.jp}
\address{Department of Mathematics and Informatics, Chiba University, Chiba, 263-8522 Japan}
\email{otsubo@math.s.chiba-u.ac.jp}
\date{March 14, 2016}
\subjclass[2000]{14D07, 19F27, 33C20 (primary), 11G15, 14K22,  (secondary)}
\keywords{Periods, Regulators, Complex multiplication, Hypergeometric functions}

\begin{abstract}
We study periods and regulators of a certain class of fibrations of varieties whose relative $H^1$ has multiplication by a number field. 
Both are written in terms of values of hypergeometric functions ${}_3F_2$ and the former reduces to values of the $\Gamma$-function, which provide examples of the conjecture of Gross-Deligne. 
\end{abstract}

\maketitle

\def\E{{\ol{D}^{(l)}_{ss}}}  

\def\can{\mathrm{can}}
\def\cano{\mathrm{canonical}}
\def\ch{{\mathrm{ch}}}
\def\Coker{\mathrm{Coker}}
\def\crys{\mathrm{crys}}
\def\dlog{d{\mathrm{log}}}
\def\dR{{\mathrm{d\hspace{-0.2pt}R}}}            
\def\et{{\mathrm{\acute{e}t}}}  
\def\Frac{{\mathrm{Frac}}}
\def\phami{\phantom{-}}
\def\id{{\mathrm{id}}}              
\def\Image{{\mathrm{Im}}}        
\def\Hom{{\mathrm{Hom}}}  
\def\Ext{{\mathrm{Ext}}}
\def\MHM{{\mathrm{MHM}}}  
\def\MdRH{{\mathrm{MdRH}}}  
  
\def\ker{{\mathrm{Ker}}}          
\def\Ker{{\mathrm{Ker}}}          
\def\mf{{\text{mapping fiber of}}}
\def\Pic{{\mathrm{Pic}}}
\def\CH{{\mathrm{CH}}}
\def\NS{{\mathrm{NS}}}
\def\NF{{\langle D\rangle}}
\def\End{{\mathrm{End}}}
\def\Aut{{\mathrm{Aut}}}
\def\pr{{\mathrm{pr}}}
\def\Proj{{\mathrm{Proj}}}
\def\ord{{\mathrm{ord}}}
\def\tr{{\mathrm{tr}}}
\def\reg{{\mathrm{reg}}}          %
\def\res{{\mathrm{res}}}          %
\def\Res{\mathrm{Res}}
\def\Spec{{\mathrm{Spec}}}     
\def\syn{{\mathrm{syn}}}
\def\cont{{\mathrm{cont}}}
\def\zar{{\mathrm{zar}}}
\def\HS{{\mathrm{HS}}}
\def\MHS{{\mathrm{MHS}}}
\def\VHS{{\mathrm{VHS}}}
\def\VMHS{{\mathrm{VMHS}}}
\def\CM{{\mathrm{CM}}}
\def\prim{{\mathrm{prim}}}
\def\fib{{\mathrm{fib}}}
\def\bA{{\mathbb A}}
\def\bC{{\mathbb C}}
\def\C{{\mathbb C}}
\def\G{{\mathbb G}}
\def\bE{{\mathbb E}}
\def\bF{{\mathbb F}}
\def\F{{\mathbb F}}
\def\bG{{\mathbb G}}
\def\bH{{\mathbb H}}
\def\bJ{{\mathbb J}}
\def\bL{{\mathbb L}}
\def\cL{{\mathscr L}}
\def\bN{{\mathbb N}}
\def\bP{{\mathbb P}}
\def\P{{\mathbb P}}
\def\bQ{{\mathbb Q}}
\def\Q{{\mathbb Q}}
\def\bR{{\mathbb R}}
\def\R{{\mathbb R}}
\def\bZ{{\mathbb Z}}
\def\Z{{\mathbb Z}}
\def\cH{{\mathscr H}}
\def\cD{{\mathscr D}}
\def\cE{{\mathscr E}}
\def\cO{{\mathscr O}}
\def\O{{\mathscr O}}
\def\cR{{\mathscr R}}
\def\cS{{\mathscr S}}
\def\cX{{\mathscr X}}
\def\cM{{\mathscr M}}
\def\cV{{\mathscr X}}
%
\def\g{\varepsilon}
\def\vG{\varGamma}
\def\vg{\varGamma}

%
%
%
%
\def\lra{\longrightarrow}
\def\lla{\longleftarrow}
\def\Lra{\Longrightarrow}
\def\hra{\hookrightarrow}
\def\lmt{\longmapsto}
\def\ot{\otimes}
\def\op{\oplus}
\def\wt#1{\widetilde{#1}}
\def\wh#1{\widehat{#1}}
\def\spt{\sptilde}
\def\ol#1{\overline{#1}}
\def\ul#1{\underline{#1}}
\def\us#1#2{\underset{#1}{#2}}
\def\os#1#2{\overset{#1}{#2}}

\def\om{{\Omega}}
\def\eq{{e_{\Q(\zeta_l)}}}
\def\dhcm{\check{H}_\CM}
\def\dk{\langle D\rangle^k}
\def\dinfk{\langle D_\infty\rangle^k}
\def\dssk{\langle \E\rangle^k}

\def\a{\alpha}\def\b{\beta}
\def\F#1#2#3{F\left({#1 \atop #2};#3\right)}

\section{Introduction}

In a previous paper \cite{asakura-otsubo}, we proved the Gross-Deligne period conjecture for a particular fibration of curves over the projective line. We also proved a formula which expresses regulators in terms of hypergeometric functions ${}_3F_2$. 
The aim of this paper is to prove similar results for more general fibrations. 
Firstly, the dimension of the fiber is arbitrary, but we put assumptions on the relative $H^1$. It is assumed to have multiplication by a number field $K$ and the dimension over $K$ is two and the monodromy is restricted. Secondly, the number field $K$ need not be abelian over $\Q$. This is beyond the scope of the original conjecture of Gross-Deligne  \cite{gross}. 

Let $f\colon X \to \P^1$ be a fibration equipped with $K$-multiplication on $R^1f_*\Q$ and satisfying our hypotheses (see Sect. 2). 
For a positive integer $l$, let $X^{(l)}$ be a desingularization of the base change of $X$ by the map $\pi:\P^1 \to \P^1; t \mapsto t^l$. 
Then, our first objective is the de Rham-Hodge structure 
\[
H^{(l)}:=H^1(\P^1,j_*(\pi_*\Q\ot R^1f_*\Q)),\quad
j:\P^1\setminus\{0,1,\infty\}\hra \P^1
\]
with $K[\Aut(\pi)]$-multiplication.
One easily sees that $H^{(l)}$ is a subquotient of $H^2(X^{(l)})$ and the complementary
space can be written explicitly
(see \S \ref{coh-sect} for the detail)

Letting $e_i:K[\Aut(\pi)]\to K_i$ be a projection to a number field, one has
a de Rham-Hodge structure $e_iH^{(l)}$ with $K_i$-multiplication.
Under some assumption on the monodromy, one can show $\dim_{K_i}e_i H^{(l)}=1$ and
hence we can discuss the {\it period} of the eigen-component 
$(H^{(l)})^\chi$ for each $\chi:K_i\hra\C$:
\[
(H^{(l)}_\dR)^\chi \cong \mathrm{Period}[(H^{(l)})^\chi]
\cdot (H^{(l)}_B)^\chi.
\]
If $K$ is abelian over $\Q$, the Gross-Deligne conjecture \cite{gross} states that the period is a product of values of the gamma function at rational numbers which reflect the Hodge decomposition of $H$. If $H$ is associated with $H^1$ of an abelian variety with complex multiplication by an abelian field, this is due to 
Shimura \cite{shimura} and Anderson \cite{anderson}. 
The elliptic case is the well-known Lerch-Chowla-Selberg formula \cite{lerch}. 

Our first main result is to compute the periods of $H^{(l)}$ and
verify the Gross-Deligne conjecture partially.
We note that our motive is not necessarily related with an abelian variety and $K$ may be non-abelian.

Our second main result of this paper is a regulator formula (Theorem \ref{main-reg}).
Beilinson's regulator map is a vast generalization of the classical regulator of units, and conjecturally describes a special value of the $L$-function.  
Our result describes a part of the regulator map 
$$\reg \colon H^3_\mathscr{M}(X^{(l)},\Q(2)) \to H^3_\mathscr{D}(X^{(l)},\Q(2))$$
from the motivic cohomology to the Deligne cohomology
in terms of special values of hypergeometric functions ${}_3F_2$. 
Recall that the classical polylogarithms are special case of hypergeometric functions and 
the regulators of Fermat curves are also written in terms of ${}_3F_2$-values \cite{otsubo1}. 
In our previous work \cite{asakura-otsubo}, we gave such a formula for a fibration of curves and proved the non-vanishing of the regulator. 
This amounts to compute the connecting homomorphism 
induced from the localization sequence of mixed Hodge structures (MHS) 
$$\rho \colon H_1(D_{ss},\Q) \to \Ext_\MHS^1(\Q, H_B^{(l)}\ot \Q(2))$$
where $D_{ss}$ denotes the fibers over $\mu_l$. 
Unfortunately our regulator formula does not guarantee
the non-vanishing of $\rho$, though we expect it in general.
We note that in our previous paper \cite{asakura-otsubo}, the non-vanishing of
regulator map is verified
in the case of the {\it hypergeometric fibrations} by developing a new technique.

The precise statements of our main theorems (the period formula and the regulator formula) shall be given in \S 4 (Theorems \ref{main}, \ref{main-reg}). 
The main ingredients of our method are {\it Fuchs equations} of
the hypergeometric functions and the theory of {\it limiting mixed Hodge structures}. 
We apply the theory of  Fuchs equations
to compute the periods of certain rational
relative $1$-forms and describe them in terms of hypergeometric functions.
Moreover, we use the theory of limiting mixed Hodge structures by Schmid to
determine certain coefficients of hypergeometric functions,
and 
in the proof of the regulator formula, {\it Dixon's formula} on ${}_3F_2$ also 
plays an important role.

Concerning the period conjecture of Gross-Deligne,  a further progress
was recently made by J. Fres\'{a}n and the first author
by a completely different method.
It covers our period formula (Theorem \ref{main}).
However, it seems impossible 
to obtain the regulator formula by the same method. 
Indeed we use lots of computational results in our proof of the period formula 
to prove the regulator formula. 

\medskip
This work is supported by JSPS Grant-in-Aid for Scientific Research, 24540001 and 25400007.

\medskip


\medskip

\noindent{\bf Notations}
Throughout this paper, we fix an embedding $\ol{\Q}\hra \C$.
For an algebraic variety $X$ defined over $\ol{\Q}$, we denote
$X^{an}:=\Hom_{\Spec\ol{\Q}}(\Spec\C, X)$ the associated analytic space.
We often omit ``$an$\rq\rq{} as it is clear from the context what is meant.
For example we denote
$H_B^\bullet(X,\Q)$ and
$H^B_\bullet(X,\Q)$ the Betti (co)homology of $X^{an}$. 
The hypergeometric function ${}_pF_q$ is defined by
$${}_pF_q\left({\alpha_1,\dots ,\alpha_p\atop \beta_1,\dots, \beta_q};x\right)=\sum_{n=0}^\infty \frac{\prod_{i=1}^p(\alpha_i)_n}{\prod_{j=1}^q(\beta_j)_n} \frac{x^n}{n!}$$
where $(\alpha)_n=\prod_{i=1}^n (\alpha+i-1)$ is the Pochhammer symbol. 
Recall that ${}_pF_q$ converges at $x=1$ if and only if $\sum\b_j-\sum\a_i>0$. 
We write 
$$\Gamma\left({\alpha_1,\dots, \alpha_p \atop \beta_1,\dots, \beta_q}\right) = \frac{\prod_{i=1}^p \Gamma(\alpha_i)}{\prod_{j=1}^q \Gamma(\beta_j)}.$$
Then, $B(\alpha,\beta)=\Gamma\left({\alpha,\beta \atop \alpha+\beta}\right)$ is the beta function.

\section{De Rham-Hodge Structure with Multiplication and Periods}\label{dR-H.sect}
\subsection{De Rham-Hodge structure}
A {\it de Rham-Hodge structure} (over $\ol{\Q}$) is a datum
$H=(H_\dR, H_B, \iota, F^\bullet)$ consisting of 
a finite dimensional vector space over $\ol\Q$ (resp. $\Q$)
$H_\dR$ (resp. $H_B$), a comparison isomorphism
$\iota\colon \C \ot_{\ol\Q} H_\dR \cong \C\ot_\Q H_B$ and
a filtration $F^\bullet$ on $H_\dR$ which induces a $\Q$-Hodge structure on $H_B$ via $\iota$. 
A {\it mixed de Rham-Hodge structure} (over $\ol{\Q}$) is a datum
$H=(H_\dR, H_B, \iota, F^\bullet,W_{\dR,\bullet},W_{B,\bullet})$ with
increasing filtrations $W_{\dR,\bullet}\subset H_\dR$, $W_{B,\bullet}\subset H_B$
such that each graded piece $\mathrm{Gr}^W_jH$ is a de Rham-Hodge structure of weight $j$.

Let $K$ be a $\Q$-algebra.
We call a ring homomorphism $\rho:K\to \End(H)$
a {\it multiplication} by $K$ where $\End$ denotes the endomorphism ring
of the mixed de Rham-Hodge structure. 
For an embedding $\chi:K\hra\ol{\Q}$, we set the eigenspaces as
\begin{align*}
&H_B^\chi:=\{x\in \ol{\Q}\ot_\Q H_B \mid gx=\chi(g)x, \forall g\in K\},\\
&H_\dR^\chi:=\{x\in H_\dR \mid gx=\chi(g)x, \forall g\in K\}. 
\end{align*}
If $K$ is a semisimple, commutative and 
finite dimensional $\Q$-algebra,
then one has the eigen-decompositions 
$\ol\Q \ot_\Q H_B=\bigoplus_\chi H_B^\chi$, $H_\dR=\bigoplus_\chi H_\dR^\chi$. 
If $K$ is a number field and $\dim_KH_B=1$ ($\Leftrightarrow$
$\dim_\Q H_B=[K:\Q]$), then the multiplication $\rho$ is called {\it maximal}.
In this case $H$ cannot have mixed weights.
Since $H_B$ is one-dimensional over $K$, one has $\dim_{\ol\Q} H_B^\chi
=\dim_{\ol\Q} H_\dR^\chi=1$ and then
there is a unique integer $p_\chi$ such that $H_B^\chi$
belongs to the Hodge component $H^{p_\chi,q_\chi}$.
The formal sum $T=\sum p_\chi\chi$ is called the {\it Hodge type} of $H$.

\subsection{Periods of de Rham-Hodge structure}
Let $H$ be a de Rham-Hodge structure with maximal multiplication.
For $\chi:K\hra\ol{\Q}$, 
there is a nonzero complex number $\mathrm{Period}(H^\chi)$ such that
$$\iota(e_\dR) = \mathrm{Period}(H^\chi) e_B$$
where $e_\dR \in H_\dR^\chi$ (resp. $e_B \in H^\chi_B$) is a basis. 
We call it the {\it period} of the $\chi$-part $H^\chi$.
This is well-defined up to multiplication by $\ol{\Q}^\times$.

\subsection{Variation of de Rham-Hodge structure}
Let $U$ be a smooth variety over $\ol{\Q}$. One can define
a variation of de Rham-Hodge structure $\cH=(H_\dR,H_B,\iota,F^\bullet,\nabla)$ on $U$
in a natural way.
A ring homomorphism $\rho:K\to \End(\cH)$ is called a {\it (relative) multiplication}
where
$\End$ denotes the endomorphism ring of the variation of de Rham-Hodge structure.

\section{Fibration with Relative Multiplication}\label{settei-sect}
We work over the base field $\ol{\Q}$.
We mean by a {\it fibration} a surjective morphism
\[
f:X\lra C
\]
from a smooth projective variety $X$ onto a smooth projective curve $C$. 
We mean by a {\it (relative)
multiplication} on $R^if_*\Q$ 
a ring homomorphism $\rho:K\to \End(R^if_*\Q|_U)$ with 
$U\subset C$ a (sufficiently small) non-empty Zariski open set.
\subsection{Setting and notation}
We begin with a fibration
\[
f:X\lra\P^1
\]
equipped with a relative multiplication on $R^1f_*\Q$
by a number field $K$ 
which satisfies the following conditions.
Hereafter we fix a coordinate $t\in \bA^1\subset \P^1$.
\begin{enumerate}
\item[\bf(a)]
The rank of the multiplication is two, i.e. $\dim_KR^1f_*\Q=2$.
\item[\bf(b)]
$f$ is smooth over $\P^1\setminus\{0,1,\infty\}$.
\item[\bf(c)]
The local monodromy $T=T_1$ on $H^1_B(X_t,\Q)$ at $t=1$
is maximally unipotent, i.e.
the rank of $N:=\log(T)$ is $\frac{1}{2}\dim_\Q H^1_B(X_t,\Q)$.
\end{enumerate}
Let $l\geq 1$ be an integer.
We then consider the commutative diagram
\[
\xymatrix{
X^{(l)}\ar[rd]_{f^{(l)}}\ar[r]^i&
X^*_l\ar@{}[rd]|{\square}\ar[r]\ar[d]&X\ar[d]^f\\
&\P^1\ar[r]^\pi&\P^1
}
\]
where $\pi(t)=t^l$ and $i$ is a desingularization.
Put $G^{(l)}:=\Aut(\pi)$. Note that $G^{(l)}$ is 
naturally isomorphic to
the group of $l$th roots of unity $\mu_l\subset \ol{\Q}^\times$.
We write by $\tau_\zeta$ the automorphism corresponding to $\zeta\in\mu_l$, namely
$\tau_\zeta(t)=\zeta t$.
There is a canonical isomorphism 
\[K[G^{(l)}]\cong \prod_i K_i\] of $\Q$-algebras where
$K_i$ are field extensions of $K$.
Let $e_i\in K[G^{(l)}]$ be the idempotent element corresponding to
the projection $K[G^{(l)}]\to K_i$ (i.e. $e_i^2=e_i$ and $e_iK[G^{(l)}]=K_i$).

For $k\in (\Z/l\Z)^\times$
let $\varepsilon_k:\Q[G^{(l)}]\to\ol{\Q}\subset \C$ be a homomorphism of $\Q$-algbra
given by $\varepsilon_k(\tau_\zeta)=\zeta^k$.
There is a one-to-one correspondence
\[
\begin{matrix}
\Hom_{\Q\mbox{-alg}}(K[G^{(l)}],\ol{\Q})
&\longleftrightarrow&
(\Z/l\Z)^\times
\times \Hom_{\Q\mbox{-alg}}(K,\ol{\Q}).\\
\varepsilon_k\ot\chi&\longleftrightarrow&
(k,\chi)
\end{matrix}
\]
Put
\begin{equation}\label{index}
I_i:=\{\varepsilon_k\ot\chi\mid \varepsilon_k\ot\chi\mbox{ factors through } K_i\}.
\end{equation}

Let $\Delta_p^*$ denote the punctured disk at $p=0,1$ or $\infty$.
Let $\phi:\pi_1(\Delta_p^*)\to \mathrm{GL}(H^1_B(X_t,\Q))$ 
be the local monodromy representation. Since the monodromy action is commutative
with the multiplication by $K$, it induces a two-dimensional representation
$\phi^\chi:\pi_1(\Delta_p^*)\to \mathrm{GL}(H^1_B(X_t,\Q)^\chi)\cong
\mathrm{GL}(2,\ol{\Q})$ for each $\chi:K\hra\ol{\Q}$ by the condition {\bf (a)}.
Its semisimplification $(\phi^\chi)^{ss}$ induces two homomorphisms
$\pi_1(\Delta_p^*)\to \mu_\infty\subset\ol{\Q}^\times$.
Under the canonical identifications
$\pi_1(\Delta_p^*)\cong H_1(\Delta_p^*,\Z)\cong\Z(1)$, $\mu_\infty\cong\Q/\Z(1)$
and $\Hom(\Z(1),\Q/\Z(1))\cong \Q/\Z$
they give rise to two rational numbers modulo integers, which we write by
$\alpha_1^\chi$ and $\alpha_2^\chi$ for $p=0$
and by $\beta_1^\chi$ and $\beta_2^\chi$ for $p=\infty$.
In other words, $e^{2\pi i\alpha^\chi_1}$ and
$e^{2\pi i\alpha^\chi_2}$ are eigenvalues of $T_0$ (=the local monodromy at $t=0$ in counter-clockwise
direction), and
$e^{2\pi i\beta^\chi_1}$ and
$e^{2\pi i\beta^\chi_2}$ are eigenvalues of $T_\infty$.

\subsection{Motivic sheaf $\cM^{(l)}$}\label{m1}
Put
\[
\cM^{(l)}:=\pi_*\Q\ot R^1f_*\Q\cong \pi_*(R^1f^{(l)}_*\Q)
\]
a variation of de Rham-Hodge structure on $\P^1\setminus\{0,1,\infty\}$ 
equipped with a multiplication
by the ring $K[G^{(l)}]$.
The stalk of $\cM^{(l)}$ is a free $K[G^{(l)}]$-module of rank $2$. 
The eigenvalues of $T_0$ (resp. $T_\infty$) 
on the $\varepsilon_k\ot\chi$-part of $\cM^{(l)}$ are
\[
\exp2\pi i\left(\frac{k}{l}+\alpha^\chi_1\right),\quad
\exp2\pi i\left(\frac{k}{l}+\alpha^\chi_2\right),
\]
\[(\mbox{resp. }
\exp2\pi i\left(-\frac{k}{l}+\beta^\chi_1\right),\quad
\exp2\pi i\left(-\frac{k}{l}+\beta^\chi_2\right)).
\]
\begin{lem}\label{ind-lemma}
Let $K[G^{(l)}]\to K_i$ be a projection and $e_i$ the associated idempotent.
Let $I_i$ be as in \eqref{index}. Fix an arbitrary $\varepsilon_{k_0}\ot\chi_0\in I_i$.
Then for any $\varepsilon_{k}\ot\chi\in I_i$ there is some 
$s\in \hat{\Z}^\times$ such that
\[
s\left(\frac{k_0}{l}+\alpha^{\chi_0}_j\right)=\frac{k}{l}+\alpha^\chi_j,\quad
s\left(\frac{k_0}{l}+\beta^{\chi_0}_j\right)=\frac{k}{l}+\beta^\chi_j\quad
\mbox{ in }\Q/\Z
\]
where $\hat{\Z}^\times:=\varprojlim_n(\Z/n\Z)^\times\cong \Aut(\Q/\Z)$
(by changing the numbering $\alpha_j^\chi$ and $\beta^\chi_j$ suitably).
\end{lem}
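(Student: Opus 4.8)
The plan is to reduce the statement to a Galois-theoretic fact about the field $K_i$ and then exploit the compatibility between the monodromy exponents and the idempotent decomposition $K[G^{(l)}]\cong\prod_i K_i$. First I would unwind what membership in $I_i$ means: by the correspondence $\varepsilon_k\ot\chi\leftrightarrow(k,\chi)$, two characters $\varepsilon_{k_0}\ot\chi_0$ and $\varepsilon_k\ot\chi$ lie in the same $I_i$ precisely when they factor through the same field $K_i$, which happens if and only if they are conjugate over $\Q$, i.e. there is a field automorphism of $\ol\Q$ (or already of the Galois closure of $K_i$) carrying one to the other. Thus there exists $\sigma\in\Aut(\ol\Q/\Q)$ with $\sigma\circ(\varepsilon_{k_0}\ot\chi_0)=\varepsilon_k\ot\chi$ as homomorphisms $K[G^{(l)}]\to\ol\Q$. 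Restricting to $G^{(l)}\cong\mu_l$, this forces $\sigma(\zeta^{k_0})=\zeta^k$ for all $\zeta\in\mu_l$, so the action of $\sigma$ on $\mu_l$ is raising to the power $k k_0^{-1}$; choosing $s\in\hat\Z^\times$ lifting the class of $k k_0^{-1}$ in $(\Z/l\Z)^\times$ (and matching $\sigma$'s action on all roots of unity, which is possible since $\Aut(\Q/\Z)\cong\hat\Z^\times$) gives $s\cdot\frac{k_0}{l}=\frac{k}{l}$ in $\Q/\Z$.

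Next I would bring in the exponents $\alpha_j^\chi,\beta_j^\chi$. The key input is that these are not arbitrary: $e^{2\pi i\alpha_j^\chi}$ are the eigenvalues of $T_0$ on $H^1_B(X_t,\Q)^\chi$, and the Betti local system $R^1f_*\Q$ is defined over $\Q$, so Galois acts on the set of eigen-lines $\{H^1_B(X_t,\Q)^\chi\}_\chi$ compatibly with its action on the eigenvalues. Concretely, for $\sigma$ as above one has $\sigma(H^1_B^{\chi_0})=H^1_B^\chi$ (since $\sigma$ sends the idempotent cutting out the $\chi_0$-part to the one cutting out the $\chi$-part), hence $\sigma$ permutes the two monodromy eigenvalues at $t=0$: $\{e^{2\pi i\alpha_1^\chi},e^{2\pi i\alpha_2^\chi}\}=\{\sigma(e^{2\pi i\alpha_1^{\chi_0}}),\sigma(e^{2\pi i\alpha_2^{\chi_0}})\}$. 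Writing $\sigma$'s action on roots of unity as the $s$-th power map (the very same $s$ fixed above, since $s$ is determined on all of $\mu_\infty$), this reads $e^{2\pi i\alpha_j^\chi}=e^{2\pi i s\alpha_j^{\chi_0}}$ after a suitable relabelling of $j$, i.e. $s\,\alpha_j^{\chi_0}=\alpha_j^\chi$ in $\Q/\Z$. The same argument applied at $t=\infty$ with $T_\infty$ gives $s\,\beta_j^{\chi_0}=\beta_j^\chi$ in $\Q/\Z$, and one must check the relabellings at $0$ and $\infty$ can be made simultaneously — this is where condition \textbf{(a)} ($\dim_K R^1f_*\Q=2$) is used, so each $\chi$-part is exactly $2$-dimensional and there is a single $2$-element set of indices $j\in\{1,2\}$ to permute, with the permutation forced to be consistent once we track the Galois action on the rank-$2$ local system $R^1f_*\Q^\chi$ globally rather than separately at each puncture.

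Finally, I would combine the two pieces: from $s\cdot\frac{k_0}{l}=\frac{k}{l}$ and $s\,\alpha_j^{\chi_0}=\alpha_j^\chi$ in $\Q/\Z$ I get $s(\frac{k_0}{l}+\alpha_j^{\chi_0})=\frac{k}{l}+\alpha_j^\chi$, and likewise for the $\beta$'s, which is exactly the assertion. The main obstacle I anticipate is justifying the \emph{simultaneous} relabelling of the indices $j$ at $0$ and at $\infty$ — a priori the Galois element could permute the two exponents at $t=0$ without permuting those at $t=\infty$, or vice versa. The resolution is to argue with the global rank-$2$ $\ol\Q$-local system $R^1f_*\ol\Q^\chi$ over $\P^1\setminus\{0,1,\infty\}$: its isomorphism class is Galois-conjugate to that of $R^1f_*\ol\Q^{\chi_0}$ via an $s$-semilinear identification (coming from the $\Q$-structure), and this single identification simultaneously conjugates $T_0$ to $T_0$ and $T_\infty$ to $T_\infty$, so one coherent choice of how the two eigen-lines are labelled (e.g. by diagonalizing $T_0$ when it is semisimple, or by the monodromy weight filtration otherwise) transports under $\sigma$ and pins down the relabelling for all punctures at once. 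Condition \textbf{(c)} is not needed here — it concerns $t=1$, which plays no role in this lemma — but conditions \textbf{(a)} and \textbf{(b)} are essential, the former to have $2$-dimensional $\chi$-parts and the latter to ensure the local system is defined and lisse on $\P^1\setminus\{0,1,\infty\}$ so that the monodromy representation makes sense.
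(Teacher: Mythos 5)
Your proposal is correct and follows essentially the same route as the paper: both arguments produce $\sigma\in\mathrm{Gal}(\ol\Q/\Q)$ with $\sigma\circ(\varepsilon_{k_0}\ot\chi_0)=\varepsilon_k\ot\chi$ and then apply the cyclotomic character $s=\varepsilon_{\mathrm{cyc}}(\sigma)$ to the monodromy eigenvalues, the only cosmetic difference being that the paper works with the characteristic polynomial of $T$ on the $\varepsilon_k\ot\chi$-part of $\cM^{(l)}$, which packages $k/l$ and $\alpha_j^\chi$ into a single exponent rather than treating them separately. The ``simultaneous relabelling'' issue you spend the last paragraph on is actually moot: the lemma explicitly allows renumbering the $\alpha_j^\chi$ and the $\beta_j^\chi$ independently (there is no canonical pairing between the eigenvalue labels at $0$ and at $\infty$), so equality of the two unordered eigenvalue pairs at each puncture separately is all that is required.
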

\begin{proof}
The Galois group $\mathrm{Gal}(\ol\Q/\Q)$ acts on $\ol\Q\ot_\Q e_i\cM^{(l)}$
by $\sigma\ot \id$ for $\sigma\in\mathrm{Gal}(\ol\Q/\Q)$.
Then letting $(\cM^{(l)})^{\varepsilon_{k}\ot\chi}\subset \ol\Q\ot_\Q e_i\cM^{(l)}$ 
be the $\varepsilon_{k}\ot\chi$-part, one has
\[
\sigma[(\cM^{(l)})^{\varepsilon_{k}\ot\chi}]\subset (\cM^{(l)})^{\sigma\circ(\varepsilon_{k}\ot\chi)}
\]
by definition.
There is $\sigma\in \mathrm{Gal}(\ol\Q/\Q)$ such that
$\sigma\circ(\varepsilon_{k_0}\ot\chi_0)=\varepsilon_{k}\ot\chi$.
For monodromy $T$ on $e_i\cM^{(l)}$, one has
\[
\det(1-xT|(\cM^{(l)})^{\varepsilon_{k}\ot\chi})
=\det(1-xT|(\cM^{(l)})^{\sigma\circ(\varepsilon_{k_0}\ot\chi_0)})
=\sigma\det(1-xT|(\cM^{(l)})^{\varepsilon_{k_0}\ot\chi_0})
\]
and hence
\[
\exp2\pi i\left(\frac{k}{l}+\alpha^\chi_j\right)
=\sigma\exp2\pi i\left(\frac{k_0}{l}+\alpha^{\chi_0}_j\right)
=\exp2\pi i \cdot s\left(\frac{k_0}{l}+\alpha^{\chi_0}_j\right),
\]
\[
\exp2\pi i\left(\frac{k}{l}+\beta^\chi_j\right)
=\sigma\exp2\pi i\left(\frac{k_0}{l}+\beta^{\chi_0}_j\right)
=\exp2\pi i\cdot s\left(\frac{k_0}{l}+\beta^{\chi_0}_j\right)
\]
where $s:=\varepsilon_{\mathrm{cyc}}(\sigma)\in\hat{\Z}^\times$ ($\varepsilon_{\mathrm{cyc}}=$ the cyclotomic character).
\end{proof}
\subsection{Monodromy on $H^1(X_t)$}
Let the notation and the assumption be
as above.
Let $X_t$ denote the general fiber of $f^{(l)}$.
Put 
$$S:=\P^1\setminus\{0,1,\infty\}, \quad S^{(l)}:=\pi^{-1}(S), \quad U^{(l)}:=(f^{(l)})^{-1}(S^{(l)}).$$
By {\bf(a)}, $H^1(X_t,\Q)$ is equipped with
an action of $K$ which commutes with the action of $\pi_1(S^{(l)},t)$.
Let $N:H^1(X_t,\Q)\to H^1(X_t,\Q)$ be the log monodromy at $t=1$.
Put $g=[K:\Q]$. The condition {\bf (b)} and the fact $N^2=0$ implies 
\[
\Ker(N)=\Image(N)\cong \Q^{\op g},\quad
\Coker(N)\cong \Q^{\op g}
\]and hence
\begin{equation}\label{lem4}
\ker(N)=\Image(N)\cong K,\quad \Coker(N)\cong K
\end{equation}
as $K$-modules.
By the theory of limiting MHS (\cite{Schmid}, \cite{steenbrink}),
the limit $H^1_{\mathrm{lim}}(X_t,\Q)$ at $t=1$ 
is a mixed Tate Hodge structure
such that 
\begin{equation}\label{lem4-0}
\mathrm{Gr}^W_iH^1_{\mathrm{lim}}(X_t,\Q)=
\begin{cases}
\ker(N)\cong \Q^{\op g}&i=0\\
\Coker(N)\cong \Q(-1)^{\op g}&i=2\\
0&\mbox{otherwise}.
\end{cases}
\end{equation}
\begin{lem}\label{monod-lem3}
Let $\Delta_1^*\subset \P^1$ be the punctured neighborhood of $t=1$.
Then there are isomorphisms
$H^1(\Delta_1^*,\cM^{(l)})\cong 
\Q(-2)^{\op gl}$ as de Rham-Hodge structure and
$H^1(\Delta_1^*,\cM^{(l)})\cong
K[G^{(l)}]$ as $K[G^{(l)}]$-module.
\end{lem}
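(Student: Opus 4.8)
The plan is to reduce everything to the structure of $R^1f_*\Q$ near $t=1$ already recorded in \eqref{lem4} and \eqref{lem4-0}, using that $\pi$ is unramified there, and then to invoke Schmid's theory of limiting mixed Hodge structures for the weight. \emph{Step 1 (local structure near $t=1$).} Since $\pi\colon t\mapsto t^l$ is étale over a neighbourhood of $t=1$, the preimage $\pi^{-1}(\Delta_1)$ is a disjoint union $\coprod_{\zeta\in\mu_l}\Delta^{(\zeta)}$ of small disks indexed by $\mu_l$, each mapped isomorphically onto $\Delta_1$, and $G^{(l)}=\mu_l$ permutes them simply transitively. Hence $\pi_*\Q|_{\Delta_1^*}$ has trivial monodromy and is, $G^{(l)}$-equivariantly, the constant variation of de Rham-Hodge structure of weight $0$ and Hodge type $(0,0)$ whose underlying module is the regular representation $\Q[G^{(l)}]$. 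Therefore, as a variation of de Rham-Hodge structure with $K[G^{(l)}]$-multiplication,
$$\cM^{(l)}|_{\Delta_1^*}\cong \Q[G^{(l)}]\ot_\Q\big(R^1f_*\Q|_{\Delta_1^*}\big),$$
with $G^{(l)}$ acting on the first factor and $K$ on the second; in particular the log monodromy at $t=1$ acts as $1\ot N$ and $H^1_{\mathrm{lim}}(\cM^{(l)})\cong \Q[G^{(l)}]\ot_\Q H^1_{\mathrm{lim}}(X_t,\Q)$ as limiting mixed Hodge structures.

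\emph{Step 2 (the $K[G^{(l)}]$-module).} As $\Delta_1^*$ is homotopy equivalent to a circle, $H^j(\Delta_1^*,\cM^{(l)})=0$ for $j\ge 2$ and $H^1(\Delta_1^*,\cM^{(l)})=\Coker(T_1-1)$. Since $N^2=0$ we have $T_1=1+N$, so this equals $\Coker(N)$ on the stalk, which by Step 1 is $\Q[G^{(l)}]\ot_\Q\Coker(N)$. By \eqref{lem4}, $\Coker(N)\cong K$ as a $K$-module, hence
$$H^1(\Delta_1^*,\cM^{(l)})\cong \Q[G^{(l)}]\ot_\Q K\cong K[G^{(l)}]$$
as $K[G^{(l)}]$-modules, a free module of rank one (of $\Q$-dimension $gl$).

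\emph{Step 3 (the de Rham-Hodge structure).} Equip $H^1(\Delta_1^*,\cM^{(l)})$ with the de Rham-Hodge structure coming from the Deligne canonical extension of $\cM^{(l)}$ across $t=1$ (defined over $\ol\Q$ since $T_1$ is unipotent) together with Schmid's limiting mixed Hodge structure. The standard computation of $H^1$ of a punctured disk with coefficients in a variation $\cH$ with unipotent local monodromy gives a natural isomorphism of de Rham-Hodge structures
$$H^1(\Delta_1^*,\cH)\cong \Coker\!\big(N\colon H^1_{\mathrm{lim}}(\cH)\lra H^1_{\mathrm{lim}}(\cH)\big)(-1),$$
where $\Coker$ carries the quotient Hodge structure induced from the limiting mixed Hodge structure; the twist by $\Q(-1)$ is the residue twist (already visible in the case of the constant sheaf, $H^1(\Delta^*,\Q)=\Q(-1)$). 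Applying this with $\cH=\cM^{(l)}$ and using Step 1: by \eqref{lem4} and $N^2=0$ one has $\Image(N)=\Ker(N)=W_0H^1_{\mathrm{lim}}(X_t,\Q)$, so $\Coker(N)$ on $H^1_{\mathrm{lim}}(X_t,\Q)$ equals $\mathrm{Gr}^W_2H^1_{\mathrm{lim}}(X_t,\Q)\cong\Q(-1)^{\op g}$ by \eqref{lem4-0}; tensoring with the type-$(0,0)$ object $\Q[G^{(l)}]$ and twisting by $\Q(-1)$ yields $H^1(\Delta_1^*,\cM^{(l)})\cong\Q(-2)^{\op gl}$ as de Rham-Hodge structure, compatibly with the module structure of Step 2.

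The topological and module-theoretic parts (Steps 1–2) are routine; the step that needs care is Step 3, namely making precise that $H^1(\Delta_1^*,\cM^{(l)})$ genuinely carries a de Rham-Hodge structure (Betti and de Rham realizations with their comparison) and that it is computed by the Tate-twisted cokernel of $N$ on the limiting mixed Hodge structure — in particular accounting correctly for the twist by $\Q(-1)$, so that the answer is $\Q(-2)$ (weight $4$) rather than $\Q(-1)$. This is where Schmid's theorem and the fact that the whole situation is defined over $\ol\Q$ are used.
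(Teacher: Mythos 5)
Your proof is correct and follows essentially the same route as the paper: the paper likewise identifies $H^1(\Delta_1^*,\cM^{(l)})$ with $\Coker[T_1-1:\cM^{(l)}\to\cM^{(l)}\ot\Q(-1)]$, notes that $T_1$ acts trivially on $\pi_*\Q$ (your Step 1), and then reads off both the $K[G^{(l)}]$-module structure and the Hodge type from \eqref{lem4} and \eqref{lem4-0}. Your Steps 2 and 3 simply spell out in more detail what the paper compresses into ``the assertion is immediate,'' including the correct bookkeeping of the residue twist $\Q(-1)$ on top of the weight-$2$ piece of the limiting mixed Hodge structure.
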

\begin{proof}
Let $T_1$ be the local monodromy on $\cM^{(l)}$ at $t=1$.
Then there is a natural isomorphism
\[
H^1(\Delta_1^*,\cM^{(l)})\cong\Coker[T_1-1:\cM^{(l)}\to\cM^{(l)}\ot\Q(-1)].
\]
Now the assertion is immediate from \eqref{lem4}, \eqref{lem4-0} and the fact that the action of $T_1$ 
on $\pi_*\Q$ is trivial.
\end{proof}

\begin{lem}\label{lem5}
The invariant part of $H^1(X_t,\Q)$ by $\pi_1(S^{(l)},t)$ is zero.
\end{lem}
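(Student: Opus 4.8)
The plan is to argue by contradiction. Suppose the invariant subspace $H^1(X_t,\Q)^{\pi_1(S^{(l)},t)}$ is nonzero. Since the $\pi_1(S^{(l)},t)$-action commutes with the $K$-action and $\dim_K H^1(X_t,\Q)=2$ by {\bf(a)}, the invariant subspace is a $K$-subspace, hence has $K$-dimension $1$ or $2$. I would first dispose of the case of $K$-dimension $2$: then $\pi_1(S^{(l)},t)$ acts trivially on all of $H^1(X_t,\Q)$, so in particular the local monodromy $T_1$ at $t=1$ is trivial, contradicting assumption {\bf(c)} (which says $N=\log T_1$ has rank $\frac12\dim_\Q H^1_B(X_t,\Q)=g>0$). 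Here one uses that $S^{(l)}=\pi^{-1}(S)$ is a finite cover of $S=\P^1\setminus\{0,1,\infty\}$, so the loop around $t=1$ still lies in $\pi_1(S^{(l)},t)$ (the cover $\pi:t\mapsto t^l$ is unramified at $t=1$).

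It remains to rule out $K$-dimension exactly $1$; call the invariant line $L\subset H^1(X_t,\Q)$, a rank-one $K$-submodule stable under the monodromy. The key point is to play off the monodromies at $t=1$ against those at $t=0$ and $t=\infty$. At $t=1$, since $L$ is $\pi_1(S^{(l)})$-invariant and $N$ has rank $g$ with $\Ker(N)=\Image(N)\cong K$ by \eqref{lem4}, the line $L$ must equal $\Ker(N)=\Image(N)$ (any monodromy-invariant vector lies in $\Ker(N)$, and $L$ is exactly a $K$-line, matching $\Ker(N)$). Thus $L$ is the weight-$0$ part of the limiting MHS at $t=1$ by \eqref{lem4-0}. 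On the other hand, being globally monodromy-invariant, $L$ underlies a sub-VHS of $R^1f^{(l)}_*\Q|_{S^{(l)}}$ of rank $g$ over $\Q$, i.e.\ a constant local system, which extends to a subsheaf invariant under $T_0$ and $T_\infty$ as well. But then the eigenvalues of $T_0$ on $L^\chi$ are $1$ for every $\chi$, forcing $\frac{k}{l}+\alpha_1^\chi\equiv 0$ (for the appropriate branch), and similarly at $\infty$; I would derive a numerical contradiction with the fact that $L$ has weight $0$ in the limiting MHS at $t=1$ — a nonzero constant subvariation of a polarized VHS of weight $1$ over a quasi-projective curve cannot exist unless it is purely of some Hodge type, whereas weight considerations (Hodge type must be split between $(0,0)$ and $(1,0)+(0,1)$ pieces incompatibly) force it to vanish. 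Concretely, a constant sub-VHS is semisimple and of weight $1$, but the limit computation \eqref{lem4-0} shows the induced limiting structure on $L$ is of weight $0$, which is absurd for a genuine weight-$1$ sub-Hodge structure that is already pure (the limiting MHS of a constant VHS is the VHS itself).

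The main obstacle I anticipate is making the last step rigorous: one must be careful that a monodromy-invariant \emph{vector} in the fiber indeed spreads out to a \emph{sub-variation of Hodge structure} — this requires the theorem of the fixed part (Deligne), which applies because $f^{(l)}$ is projective and $S^{(l)}$ is quasi-projective. Once that is in hand, the contradiction is clean: a sub-VHS is pure of weight $1$ and its own limiting MHS, so it cannot sit inside the weight-$0$ graded piece forced by \eqref{lem4-0}. An alternative, possibly cleaner route avoiding Hodge theory: use Lemma \ref{monod-lem3}, which identifies $H^1(\Delta_1^*,\cM^{(l)})$; combined with the global-to-local restriction and the fact (to be shown, or cited from the construction of the fibration) that $\cM^{(l)}$ has no global sections, one gets that $H^0(S^{(l)},R^1f^{(l)}_*\Q)=0$ directly. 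I would choose whichever is shorter given what the paper has already set up; the fixed-part argument seems most self-contained here.
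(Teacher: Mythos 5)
Your proposal is correct and follows essentially the same route as the paper: Deligne's theorem of the fixed part makes the invariant part a constant sub-VHS, pure of weight $1$, whose limiting MHS at $t=1$ must therefore inject into $\mathrm{Gr}^W_1H^1_{\mathrm{lim}}(X_t,\Q)=\Ker(N)/\Image(N)=0$ by \eqref{lem4}. The case split on the $K$-dimension and the digression on eigenvalues of $T_0$ and $T_\infty$ are superfluous, but the weight argument you ultimately settle on is exactly the paper's proof.
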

\begin{proof}
Let $M=H^1(X_t,\Q)^{\pi_1(S^{(l)},t)}$ be the invariant part.
By \cite{HodgeII} (4.1.2), $M$ is a sub-Hodge structure of pure weight $1$.
It defines a constant VHS, and hence the limiting MHS 
 $M_{\mathrm{lim}}$ at $t=1$ is of pure weight $1$.
Hence
\[
M=M_{\mathrm{lim}}\subset
\mathrm{Gr}^W_1H^1_{\mathrm{lim}}(X_t,\Q)=\Ker(N)/\Image(N)
\]
and the last term vanishes by \eqref{lem4}. Hence $M=0$.
\end{proof}
\begin{cor}\label{monod-lem1}
$\vg(S,\cM^{(l)})=0$ and
$\vg(S,(\cM^{(l)})^*)=0$ where $(-)^*$ denotes the dual sheaf.
\end{cor}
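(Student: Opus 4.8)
The plan is to deduce the vanishing of global sections of $\cM^{(l)}$ (and its dual) on $S$ from the vanishing of the monodromy-invariant part established in Lemma \ref{lem5}, using the standard dictionary between local systems on a punctured curve and their monodromy representations. First I would recall that, since $S$ is a smooth affine curve (a thrice-punctured $\P^1$) and $\cM^{(l)}$ underlies a local system, $\vg(S,\cM^{(l)}) = H^0(S,\cM^{(l)})$ is computed by the invariants of $\cM^{(l)}$ under $\pi_1(S,t)$; concretely $\vg(S,\cM^{(l)}) = (\cM^{(l)}_t)^{\pi_1(S,t)}$, the $\pi_1(S,t)$-invariants of the stalk. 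Because $\cM^{(l)} = \pi_*\Q \ot R^1f_*\Q$ and $\pi\colon S^{(l)}\to S$ is finite étale (Galois with group $G^{(l)}$), the stalk at $t$ is $\bigoplus_{t'\in \pi^{-1}(t)} H^1(X_{t'},\Q)$, and taking $\pi_1(S,t)$-invariants of an induced representation gives, by Frobenius reciprocity (Shapiro's lemma for group cohomology in degree $0$), $(\cM^{(l)}_t)^{\pi_1(S,t)} \cong H^1(X_t,\Q)^{\pi_1(S^{(l)},t)}$. The right-hand side is exactly the invariant part shown to be zero in Lemma \ref{lem5}. Hence $\vg(S,\cM^{(l)})=0$.

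For the dual sheaf I would argue similarly: $(\cM^{(l)})^*$ underlies the contragredient local system, so $\vg(S,(\cM^{(l)})^*) \cong \bigl((\cM^{(l)}_t)^*\bigr)^{\pi_1(S,t)} = \Hom_{\pi_1(S,t)}(\cM^{(l)}_t,\Q)$, the space of $\pi_1(S,t)$-coinvariant functionals. Since the coefficient local system is polarizable (it is a variation of Hodge structure, hence carries a $\pi_1$-invariant nondegenerate pairing into a Tate twist), the representation $\cM^{(l)}_t$ is self-dual up to a twist, so its invariants and its coinvariants have the same dimension; thus $\vg(S,(\cM^{(l)})^*)=0$ as well. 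Alternatively, without invoking polarizability, one can use that $\Hom_{\pi_1}(\cM^{(l)}_t,\Q)$ is dual to the space of $\pi_1$-coinvariants $(\cM^{(l)}_t)_{\pi_1}$, and by the same Shapiro argument this equals $H_0(\pi_1(S^{(l)},t),H^1(X_t,\Q)) = H^1(X_t,\Q)_{\pi_1(S^{(l)},t)}$; this is the cokernel of the augmentation-ideal action, which by Poincaré duality on the fibers (identifying $H^1(X_t)$ with a twist of $H^{2\dim X_t-1}(X_t)$) is dual to the invariant part $H^1(X_t,\Q)^{\pi_1(S^{(l)},t)}=0$ of Lemma \ref{lem5}.

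The main obstacle, or rather the one point demanding care, is justifying the first reduction — that $\vg(S,-)$ really computes $\pi_1$-invariants of the stalk. This is clean when one works with $\cM^{(l)}$ as a constructible sheaf/local system on the analytic (or étale) site of $S$: for a local system $\mathcal{L}$ on a connected space, $H^0 = \Gamma$ is the invariants of the monodromy representation, full stop. If instead one wants the statement at the level of the variation of de Rham-Hodge structure, one notes that a flat section is in particular a locally constant section, so the same identification holds, and the de Rham incarnation $\vg(S,\cM^{(l)}_\dR)$ as the kernel of $\nabla$ matches via the comparison isomorphism. So the corollary follows by combining this standard identification with Lemma \ref{lem5} (applied directly for $\cM^{(l)}$, and together with fiberwise Poincaré duality for the dual). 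No hypergeometric input or limiting-MHS machinery beyond what already went into Lemma \ref{lem5} is needed here.
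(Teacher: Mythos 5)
Your argument is correct and essentially identical to the paper's: the paper likewise reduces $\vg(S,\cM^{(l)})=\vg(S,\pi_*R^1f^{(l)}_*\Q)=\vg(S^{(l)},R^1f^{(l)}_*\Q)$ to the $\pi_1(S^{(l)},t)$-invariants of $H^1(X_t,\Q)$, which vanish by Lemma \ref{lem5}, and handles the dual via the self-duality $R^1f^{(l)}_*\Q\cong(R^1f^{(l)}_*\Q)^*$ coming from Hard Lefschetz. Your Shapiro-reciprocity phrasing of the pushforward step and your appeal to polarizability (or fiberwise Poincar\'e duality) for the dual are just equivalent formulations of the same two ingredients.
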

\begin{proof}
Indeed $\vg(S,\cM^{(l)})=\vg(S,\pi_*R^1f^{(l)}_*\Q)=\vg(S^{(l)},R^1f^{(l)}_*\Q)$
is the invariant part of $H^1(X_t,\Q)$ by $\pi_1(S^{(l)},t)$.
By the Hard Lefschetz theorem, $R^1f^{(l)}_*\Q\cong
(R^1f^{(l)}_*\Q)^*$ and hence the latter also follows.
\end{proof}
\begin{lem}\label{monod-lem2}
$H^2(\P^1,j_*\cM^{(l)})=0$ where $j:S\hra \P^1$ is the open immersion.
Hence $E=\vg(\P^1,R^1j_*\cM^{(l)})$ (see \eqref{local1} for the definition of $E$).
\end{lem}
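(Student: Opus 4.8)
The plan is to compute the hypercohomology of $j_*\cM^{(l)}$ on $\P^1$ by combining the local vanishing of cohomology over the interior $S$ (Corollary \ref{monod-lem1}) with control of the local monodromy contributions at the three boundary points $0,1,\infty$. The spectral sequence (or Leray for $j$) reduces everything to sheaf cohomology on $\P^1$ of a constructible sheaf that is a local system on $S$ and whose stalks at $0,1,\infty$ are the invariants of the respective local monodromy. The key point is that $H^2$ of such a sheaf on the curve $\P^1$ is computed by global sections of $R^1j_*$-type data, and one shows it vanishes because the relevant Euler characteristic, together with the vanishing $H^0(\P^1,j_*\cM^{(l)})=\vg(S,\cM^{(l)})=0$ from Corollary \ref{monod-lem1}, forces $H^2=0$ once $H^1$ is identified with $E$.

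Concretely, first I would write down the exact sequence on $\P^1$ relating $j_*\cM^{(l)}$, the (derived) pushforward $Rj_*\cM^{(l)}$, and the skyscraper contributions $R^1j_*\cM^{(l)}$ supported on $\{0,1,\infty\}$: there is a triangle whose long exact cohomology sequence reads
\[
0 \to H^0(\P^1,j_*\cM^{(l)}) \to H^0(S,\cM^{(l)}) \to H^0(\P^1,R^1j_*\cM^{(l)}) \to H^1(\P^1,j_*\cM^{(l)}) \to H^1(S,\cM^{(l)}) \to \cdots
\]
together with $H^2(\P^1,j_*\cM^{(l)}) \hookrightarrow H^2(S,\cM^{(l)})$ modulo the skyscraper terms, which vanish in degree $\geq 1$. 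Since $S = \P^1\setminus\{0,1,\infty\}$ is affine of dimension $1$, $H^2(S,\cM^{(l)})=0$, and by Corollary \ref{monod-lem1} also $H^0(S,\cM^{(l)})=\vg(S,\cM^{(l)})=0$. Reading off the sequence, $H^2(\P^1,j_*\cM^{(l)})$ is squeezed between these vanishing groups and is therefore $0$; then the same sequence collapses to $H^1(\P^1,j_*\cM^{(l)})\cong \vg(\P^1,R^1j_*\cM^{(l)})=E$, which is the definition recalled in \eqref{local1}.

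The main obstacle — and the step requiring care — is justifying that the skyscraper sheaves $R^qj_*\cM^{(l)}$ for $q\geq 1$ genuinely have no higher cohomology on $\P^1$ (automatic, since they are supported on points) \emph{and} that the $H^2$ term truly vanishes rather than merely injecting into $H^2(S,\cM^{(l)})$: one needs that the affine vanishing applies to the locally constant sheaf $\cM^{(l)}$ on the curve $S$, which is the Artin vanishing (or, in the analytic/Betti setting, the fact that an open Riemann surface has cohomological dimension $1$). Once this is in place, the argument is forced. I would also remark that $R^1j_*\cM^{(l)}$ at $t=1$ is controlled by Lemma \ref{monod-lem3} and at $t=0,\infty$ by the eigenvalue computation of \S\ref{m1}, so that $E$ is finite-dimensional, but this is not needed for the vanishing statement itself — only for the subsequent identification of $E$.
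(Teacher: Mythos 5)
There is a genuine gap, and it sits exactly at the step you yourself flag as ``requiring care''. Your long exact sequence is mis-ordered. In the Leray sequence for $j$ (equivalently, the triangle $j_*\cM^{(l)}\to Rj_*\cM^{(l)}\to (R^1j_*\cM^{(l)})[-1]$), the term $\vg(\P^1,R^1j_*\cM^{(l)})=E$ does not sit between $H^0(S,\cM^{(l)})$ and $H^1(\P^1,j_*\cM^{(l)})$; it sits between $H^1(S,\cM^{(l)})$ and $H^2(\P^1,j_*\cM^{(l)})$. The correct sequence is
\[
0\to H^1(\P^1,j_*\cM^{(l)})\to H^1(S,\cM^{(l)})\to E\to H^2(\P^1,j_*\cM^{(l)})\to H^2(S,\cM^{(l)})=0 ,
\]
so that $H^2(\P^1,j_*\cM^{(l)})\cong\Coker[M^{(l)}\to E]$. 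The obstruction to injectivity of $H^2(\P^1,j_*\cM^{(l)})\to H^2(S,\cM^{(l)})$ is the \emph{degree-zero} cohomology of the skyscraper $R^1j_*\cM^{(l)}$, which is $E\ne 0$, not the higher degrees. Consequently $H^2(\P^1,j_*\cM^{(l)})$ is not squeezed between $\vg(S,\cM^{(l)})=0$ and $H^2(S,\cM^{(l)})=0$, and Artin/affine vanishing on $S$ proves nothing by itself: what remains to be shown is the surjectivity of $M^{(l)}\to E$, which is precisely the exactness of \eqref{local1} at $E$ that the lemma is meant to establish, so the argument as written is circular. The two facts you actually invoke ($\vg(S,\cM^{(l)})=0$ and $H^2(S,\cM^{(l)})=0$) do not imply the conclusion for a general local system: a nonsplit extension $0\to L\to\cM\to\Q\to 0$ on $S$ with $L$ a nontrivial rank-one local system has $\vg(S,\cM)=0$, yet $H^2(\P^1,j_*\cM)\cong \vg(S,\cM^*)^*\ne 0$ because $\Q\subset\cM^*$.

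The missing input is the vanishing of invariants of the \emph{dual} local system, $\vg(S,(\cM^{(l)})^*)=0$ --- the second half of Corollary \ref{monod-lem1}, which you never use. The paper's proof goes through $j_!$ rather than $Rj_*$: the sequence $0\to j_!\cM^{(l)}\to j_*\cM^{(l)}\to i_*i^*j_*\cM^{(l)}\to 0$ gives $H^2(\P^1,j_!\cM^{(l)})\cong H^2(\P^1,j_*\cM^{(l)})$ (the skyscraper has no cohomology in degrees $\ge 1$), and Verdier duality identifies $H^2(\P^1,j_!\cM^{(l)})$ with $\vg(S,(\cM^{(l)})^*)^*$, which vanishes. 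Equivalently, in your setup, $\Coker[M^{(l)}\to E]$ is dual to $\vg(S,(\cM^{(l)})^*)$; some form of duality is unavoidable here. Once $H^2=0$ is in hand, your reading of the tail of the sequence should also be corrected: the conclusion is the exactness of \eqref{local1} at $E$, i.e.\ that the third term equals $\vg(\P^1,R^1j_*\cM^{(l)})$, not an isomorphism $H^1(\P^1,j_*\cM^{(l)})\cong E$ (that $H^1$ is $H^{(l)}$, not $E$).
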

\begin{proof}
Let $i:\P^1\setminus S\hra \P^1$ be the closed immersion.
There is an exact sequence
\[
0\lra j_!\cM^{(l)}\lra j_*\cM^{(l)}\lra i_*i^*j_*\cM^{(l)}\lra 0 
\]
of constructible sheaves.
This yields $H^2(\P^1,j_!\cM^{(l)})=H^2(\P^1,j_*\cM^{(l)})$.
By the Verdier duality theorem
\[
H^2(\P^1,j_!\cM^{(l)})=H^0(\P^1,Rj_*(\cM^{(l)})^*)^*=\vg(S,(\cM^{(l)})^*)^*.
\]
Now the assertion follows from Corollary \ref{monod-lem1}.
\end{proof}
\begin{lem}\label{lem6}
$H^1_B(X^{(l)},\Q)=0$.
\end{lem}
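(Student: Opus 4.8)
The plan is to deduce the vanishing from the Leray spectral sequence for $f^{(l)}$ together with Lemma~\ref{lem5}. Set $Z:=\{0,\infty\}\cup\mu_l=\P^1\setminus S^{(l)}$, let $j^{(l)}\colon S^{(l)}\hra\P^1$ be the open immersion and $L:=R^1f^{(l)}_*\Q|_{S^{(l)}}$ the monodromy local system. The Leray spectral sequence $E_2^{p,q}=H^p(\P^1,R^qf^{(l)}_*\Q)\Rightarrow H^{p+q}(X^{(l)},\Q)$ gives the exact sequence
\[
0\lra H^1(\P^1,R^0f^{(l)}_*\Q)\lra H^1(X^{(l)},\Q)\lra H^0(\P^1,R^1f^{(l)}_*\Q).
\]
Since $f^{(l)}$ is surjective, the unit map $\Q_{\P^1}\to R^0f^{(l)}_*\Q$ is injective with cokernel a skyscraper sheaf supported on $Z$, so $H^1(\P^1,R^0f^{(l)}_*\Q)=0$ because $H^1(\P^1,\Q)=0$ and $H^1$ of a skyscraper vanishes. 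Hence $H^1(X^{(l)},\Q)$ embeds into $H^0(\P^1,R^1f^{(l)}_*\Q)$, and it suffices to prove that the latter is zero.

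First I would restrict a global section of $R^1f^{(l)}_*\Q$ to $S^{(l)}$, where it becomes an element of $H^0(S^{(l)},L)=H^1(X_t,\Q)^{\pi_1(S^{(l)},t)}$, which vanishes by Lemma~\ref{lem5}. Hence every global section is supported on the finite set $Z$, and therefore
\[
H^0(\P^1,R^1f^{(l)}_*\Q)=\bigoplus_{p\in Z}\Ker(sp_p),\qquad sp_p\colon(R^1f^{(l)}_*\Q)_p\lra(j^{(l)}_*L)_p.
\]
By the proper base change theorem $(R^1f^{(l)}_*\Q)_p\cong H^1((f^{(l)})^{-1}(p),\Q)$ and $(j^{(l)}_*L)_p=H^1(X_t,\Q)^{T_p}$ with $T_p$ the local monodromy, and under these identifications $sp_p$ is the specialization map from the cohomology of the fibre over $p$ to the monodromy invariants of the general fibre. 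So the problem is reduced to the injectivity of $sp_p$ for every $p\in Z$.

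To prove this I would first pass to a convenient $l$. As $H^1$ of a smooth projective variety is a birational invariant, $H^1(X^{(l)},\Q)$ does not depend on the chosen desingularization; moreover, for each $m\geq 1$ there is, after replacing $X^{(lm)}$ by a suitable smooth projective model, a dominant generically finite morphism $X^{(lm)}\to X^{(l)}$, for which the pullback $H^1(X^{(l)},\Q)\to H^1(X^{(lm)},\Q)$ is injective (compose with the Gysin map to get multiplication by the degree). Hence it is enough to treat $l$ divisible by the least common multiple of the orders of the semisimple parts of the local monodromies of $f$ at $0$ and at $\infty$. For such $l$ the monodromy $T_p$ of $f^{(l)}$ is unipotent at every $p\in Z$: at a point of $\mu_l$ this follows from condition~{\bf(c)} together with the $G^{(l)}$-symmetry conjugating $T_1$ to $T_\zeta$, and at $0$ and $\infty$ it is an $l$th power of the corresponding local monodromy of $f$. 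By semi-stable reduction --- which requires no base change once the monodromy is unipotent --- we may, after a further modification of $X^{(l)}$ supported over $Z$ (not changing $H^1(X^{(l)},\Q)$), assume the fibre of $f^{(l)}$ over each $p\in Z$ is a reduced normal crossing divisor. Then the Clemens--Schmid exact sequence applies at $p$: since $(f^{(l)})^{-1}(p)$ has complex dimension $\dim X^{(l)}-1$, the term $H_{2\dim X^{(l)}-1}((f^{(l)})^{-1}(p),\Q)$ preceding $H^1((f^{(l)})^{-1}(p),\Q)$ vanishes, so $\Ker(sp_p)=0$. This yields $H^0(\P^1,R^1f^{(l)}_*\Q)=0$, hence $H^1(X^{(l)},\Q)=0$.

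I expect the last step to be the main obstacle: the singular fibres of $f$ over $0$ and $\infty$ are not constrained by the hypotheses, and one needs the kernel of the specialization map there to vanish. The mechanism that makes this work is the reduction to large $l$ through the covers $X^{(lm)}\to X^{(l)}$, which drags all the relevant degenerations into the unipotent --- hence potentially semi-stable --- range, where limiting mixed Hodge structure theory (Schmid, Steenbrink; compare~\eqref{lem4-0}) controls $sp_p$. By contrast, the first two steps are formal once Lemma~\ref{lem5} and the structure of the Leray spectral sequence are available.
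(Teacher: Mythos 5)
Your reduction of the statement to the vanishing of $H^0(\P^1,R^1f^{(l)}_*\Q)$, and then (via Lemma \ref{lem5}) to the injectivity of the specialization maps $sp_p$ for $p\in Z$, is sound. The genuine gap is exactly at the step you flag as the main obstacle: the assertion that semistable reduction ``requires no base change once the monodromy is unipotent''. This is not a theorem in general. The semistable reduction theorem of Kempf--Knudsen--Mumford--Saint-Donat produces a reduced normal crossing fibre only after a ramified base change $t\mapsto t^N$ followed by a modification, and unipotence of the local monodromy is not known to allow $N=1$ outside of special cases (relative dimension one, abelian fibrations, some classes of surfaces); here the relative dimension is arbitrary. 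Moreover, your passage to $X^{(lm)}$ only realizes base changes ramified at $0$ and $\infty$, so it cannot supply the ramified covers at the points of $\mu_l$ that a KKMS-type argument would require there; taking $l$ even larger does not close the gap. (The conclusion $\Ker(sp_p)=0$ is in fact true for $H^1$ of an arbitrary degeneration with connected fibres, without any semistability: with $n=\dim X^{(l)}$, $T_p$ the preimage of a small disk around $p$ and $T_p^*=T_p\setminus X_p$, duality gives $H^1(T_p,T_p^*)\cong H_{2n-1}(X_p)=0$, so $H^1(X_p)\cong H^1(T_p)\hra H^1(T_p^*)$; the one-dimensional kernel of $H^1(T_p^*)\to H^1(X_t)^{T_p}$ is generated by the pullback of the generator of $H^1(\Delta_p^*,\Q)$, which cannot extend to $T_p$ because it pairs nontrivially with the boundary circle of a multisection, a cycle that bounds in $T_p$. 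But this is an argument you would have to supply; as written, your proof rests on an unproved assertion.)

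For comparison, the paper's proof avoids the special fibres altogether and is much shorter: by Lemma \ref{lem5} the Leray spectral sequence for $U^{(l)}\to S^{(l)}$ gives $H^1(U^{(l)},\Q)=H^1(S^{(l)},\Q)$, which is pure of weight $2$ since $S^{(l)}$ is a smooth rational curve with finitely many punctures; on the other hand $H^1(X^{(l)},\Q)$ is pure of weight $1$ and injects into $H^1(U^{(l)},\Q)$ (restriction to a dense open subset of a smooth projective variety), hence vanishes. This weight argument makes the entire discussion of $\Ker(sp_p)$, the passage to large $l$, and Clemens--Schmid unnecessary.
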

\begin{proof}
By Lemma \ref{lem5} one has $\vg(S^{(l)},R^1f^{(l)}_*\Q)=0$.
Hence $H^1(U^{(l)},\Q)=H^1(S^{(l)},\Q)$. Since $S^{(l)}$ is a smooth rational curve,
the weight of $H^1(U^{(l)},\Q)$ is $2$.
Hence the map $H^1(X^{(l)},\Q)\hra H^1(U^{(l)},\Q)$ must be zero.
This means 
$H^1(X^{(l)},\Q)=0$.
\end{proof}

\begin{lem}\label{Ev-lem}
Let $\mathrm{Ev}\subset H_1(X_t,\Q)$ be the space of vanishing cycles 
at $t=1$, namely $\mathrm{Ev}=\ker(N)$.
Then $\Q[\pi_1(S^{(l)},t)](\mathrm{Ev})=H_1(X_t,\Q)$.
Moreover for $\chi:K\hra\C$, one has
$\C[\pi_1(S^{(l)},t)](\mathrm{Ev}^\chi)=H_1(X_t,\C)^\chi$, where we put
$\mathrm{Ev}^\chi=\Ker(N)\cap H_1(X_t,\C)^\chi$.
\end{lem}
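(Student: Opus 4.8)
The plan is to reduce the statement to a claim about a rank-two local system and then exploit the rigidity of a single unipotent Jordan block together with semisimplicity of the geometric local system. First I would note that the second assertion implies the first: since the log monodromy $N$ at $t=1$ commutes with the $K$-action and satisfies $N^2=0$, one has $\mathrm{Ev}\ot_\Q\C=\bigoplus_\chi\mathrm{Ev}^\chi$, each $\mathrm{Ev}^\chi$ being stable under $\pi_1(S^{(l)},t)$ because the monodromy commutes with $K$; tensoring $\Q[\pi_1(S^{(l)},t)]\mathrm{Ev}$ with $\C$ and using that the group-ring action respects the $\chi$-decomposition, the equality $\Q[\pi_1(S^{(l)},t)]\mathrm{Ev}=H_1(X_t,\Q)$ becomes equivalent to $\C[\pi_1(S^{(l)},t)]\mathrm{Ev}^\chi=H_1(X_t,\C)^\chi$ for each $\chi$. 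So I may fix $\chi$ and work inside $H:=H_1(X_t,\C)^\chi$.

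Next, by {\bf(a)} one has $\dim_\C H=2$, and by {\bf(c)} together with \eqref{lem4} the induced endomorphism $N=N^\chi$ is nonzero with $N^2=0$; hence $N$ has rank one, $\mathrm{Ev}^\chi=\Ker(N)=\Image(N)$ is a line, and it is the unique line of $H$ fixed by $T_1=1+N$. Set $V=\C[\pi_1(S^{(l)},t)]\mathrm{Ev}^\chi$. If $V\neq H$, then $V=\mathrm{Ev}^\chi$ by a dimension count, so $\mathrm{Ev}^\chi$ underlies a rank-one sub-local system $L$ of the local system $\cH$ on $S^{(l)}$ with fibre $H$. Now $\cH$ is a direct summand of the $\C$-local system attached to $H_1(X_t,\Q)$, which is semisimple because it underlies a polarizable variation of Hodge structure; indeed $f^{(l)}$ is smooth and projective over $S^{(l)}$, the base change by $\pi$ being \'etale there and the desingularization $i$ an isomorphism over $U^{(l)}$. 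Thus $\cH=L\oplus L'$ for some rank-one sub-local system $L'$, and then $T_1$, hence $N=T_1-1$, preserves the fibre $L'_t$. But $N(L'_t)\subset\Image(N)=L_t$, so $N(L'_t)\subset L_t\cap L'_t=0$, whence $L'_t\subset\Ker(N)=L_t$, contradicting $L_t\oplus L'_t=H$. Therefore $V=H$, which is what we wanted.

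I expect the argument to be essentially this short; the only real content is the interplay of three facts, namely that $N^\chi$ is a rank-one nilpotent on a two-dimensional space (so its Jordan block is rigid and its invariant line unique), that the ambient local system is semisimple (Deligne), so a $\pi_1$-stable $\mathrm{Ev}^\chi$ would have to admit a $\pi_1$-stable complement, and that such a complement is forced into $\Ker(N^\chi)$, which is absurd. The one point requiring a sentence of care — the nearest thing to an obstacle — is checking that $f^{(l)}$ is genuinely smooth projective over $S^{(l)}$, so that Deligne's semisimplicity theorem applies; the rest is formal linear algebra over $\C$.
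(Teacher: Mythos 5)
Your proof is correct and follows essentially the same route as the paper's: Deligne's semisimplicity theorem produces a monodromy-stable complement of $\Q[\pi_1(S^{(l)},t)](\mathrm{Ev})$, which is then killed by the identity $\Ker(N)=\Image(N)=\mathrm{Ev}$. The only cosmetic difference is that you run the argument $\chi$-component by $\chi$-component over $\C$ (deducing the rational statement from the eigenspace decomposition), whereas the paper argues once over $\Q$ and obtains the $\chi$-statement as an immediate consequence.
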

\begin{proof}
The latter follows immediately from the former.
We show the former.
By Deligne's semisimplicity theorem \cite{HodgeII} (4.1.6), 
there is a complementary subspace
$V\subset H_1(X_t,\Q)$ of 
$\Q[\pi_1(S^{(l)},t)](\mathrm{Ev})$
which is stable under the action of  $\pi_1(S^{(l)},t)$.
Note $\Image(N)=\Ker(N)=\mathrm{Ev}$. Therefore the commutative diagram
\[
\xymatrix{
H_1(X_t)/\mathrm{Ev}\ar[r]^{\quad N}_{\quad \cong}&\mathrm{Ev}\\
V\ar@{^{(}->}[r]^{N\quad}\ar[u]^\cup&V\cap \mathrm{Ev}=0\ar[u]
}
\]
yields $V=0$.
\end{proof}
\begin{lem}\label{Ev-cor}
$H_1(X_t,\C)^\chi\cong\C^2$ is an irreducible
$\C[\pi_1(S^{(l)},t)]$-module.
\end{lem}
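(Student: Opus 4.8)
The plan is to argue by contradiction, ruling out the only possibility for a nontrivial invariant subspace. First I would record the dimension count: by condition {\bf(a)} and Poincar\'e duality $H_1(X_t,\Q)$ is free of rank $2$ over $K$, so $\dim_\C H_1(X_t,\C)^\chi=2$; and by \eqref{lem4} the line $\mathrm{Ev}^\chi=\ker(N)\cap H_1(X_t,\C)^\chi$ is one-dimensional. Thus a reducible $H_1(X_t,\C)^\chi$ would contain a $\C[\pi_1(S^{(l)},t)]$-stable line $W$, and the whole problem is to show no such $W$ can exist.

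The key input is the local monodromy $T_1$ at $t=1$. Since $N^2=0$ one has $T_1=\exp(N)=1+N$, and the restriction $N^\chi$ of $N$ to $H_1(X_t,\C)^\chi$ has kernel $\mathrm{Ev}^\chi$, hence rank one; therefore $T_1^\chi$ is a single unipotent Jordan block of size two, whose only stable line is $\Image(T_1^\chi-1)=\Image(N^\chi)=\mathrm{Ev}^\chi$. Because $t=1$ is one of the punctures of $S^{(l)}$, any $\C[\pi_1(S^{(l)},t)]$-submodule is in particular $T_1^\chi$-stable, so the hypothetical line $W$ must equal $\mathrm{Ev}^\chi$. But Lemma \ref{Ev-lem} says $\C[\pi_1(S^{(l)},t)](\mathrm{Ev}^\chi)=H_1(X_t,\C)^\chi$, so $\mathrm{Ev}^\chi$ is not a proper submodule --- a contradiction. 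Hence $H_1(X_t,\C)^\chi$ is irreducible.

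The argument is short, and I do not expect a serious obstacle; the only point requiring care is the identification of $\mathrm{Ev}^\chi$ as the \emph{unique} $T_1^\chi$-stable line, which rests on $N^\chi$ being nonzero (equivalently, on the maximal unipotency hypothesis {\bf(c)} having propagated to the $\chi$-eigenspace, which is exactly what \eqref{lem4} provides). After that, invoking Lemma \ref{Ev-lem} closes the loop.
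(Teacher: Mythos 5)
Your argument is correct and is essentially the paper's own proof: both identify $\mathrm{Ev}^\chi$ as the unique line stable under the unipotent local monodromy at $t=1$ (using $N^2=0$ and the rank-one property of $N^\chi$ from \eqref{lem4}), and then invoke Lemma \ref{Ev-lem} to conclude that this line generates the whole module, so no proper nonzero submodule exists.
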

\begin{proof}
By \eqref{lem4},
$\mathrm{Ev}^\chi$ is one-dimensional over $\C$.
Since $N^2=0$, one sees that  $\mathrm{Ev}^\chi$ is the 
unique one-dimensional subspace of $H_1(X_t,\C)^\chi$
which is stable under the action of $N$. 
Since $\C[\pi_1(S^{(l)},t)](\mathrm{Ev}^\chi)=H_1(X_t,\C)^\chi$
by Lemma \ref{Ev-lem},
there is no non-trivial $\C[\pi_1(S^{(l)},t)]$-submodule.
\end{proof}
\begin{lem}\label{hodgefilt-lem}
Let $F^\bullet H^i(X_t,\C)^{\chi}=F^\bullet \cap H^i(X_t,\C)^{\chi}$ 
denote the $\chi$-part
of the Hodge filtration. Then
$\dim_\C F^1H^1(X_t,\C)^{\chi}=1$.
\end{lem}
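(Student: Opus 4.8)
The plan is to read off the statement from the limiting mixed Hodge structure at $t=1$, using that the multiplication by $K$ survives in the limit.

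To see that the assertion is plausible and to fix notation, note first that by {\bf(a)} the space $H^1(X_t,\Q)$ is free of rank two over $K$, so $H^1(X_t,\C)^\chi\cong\C^2$ for every embedding $\chi\colon K\hra\C$; since $K$ acts on the variation of Hodge structure by endomorphisms, the Hodge filtration is $K$-stable, so $F^1H^1(X_t,\C)=\bigoplus_\chi F^1H^1(X_t,\C)^\chi$ and $\sum_\chi\dim_\C F^1H^1(X_t,\C)^\chi=\dim_\C F^1H^1(X_t,\C)=g$, the sum running over the $g$ embeddings $\chi$. Thus it would suffice to prove $\dim_\C F^1H^1(X_t,\C)^\chi\le 1$ for each $\chi$; in fact the argument below pins it down exactly.

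Next I would pass to $t=1$. As $K$ commutes with $T_1$, hence with $N=\log T_1$, the multiplication by $K$ acts on the limiting MHS $H^1_{\mathrm{lim}}(X_t,\Q)$ by morphisms of mixed Hodge structures, compatibly with the weight filtration (the monodromy weight filtration of $N$). By \eqref{lem4} and \eqref{lem4-0}, as $K$-modules carrying a Hodge structure
\[
\mathrm{Gr}^W_0H^1_{\mathrm{lim}}(X_t,\Q)=\ker(N)\cong K,\qquad \mathrm{Gr}^W_2H^1_{\mathrm{lim}}(X_t,\Q)=\Coker(N)\cong K(-1),
\]
and the remaining graded pieces vanish. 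Taking $\chi$-parts, $H^1_{\mathrm{lim}}(X_t,\C)^\chi$ is a two-dimensional MHS with $(\mathrm{Gr}^W_0)^\chi\cong\C$ of type $(0,0)$ and $(\mathrm{Gr}^W_2)^\chi\cong\C$ of type $(1,1)$. Because the limiting Hodge filtration $F^\bullet_{\mathrm{lim}}$ is strictly compatible with $W$ and induces these pure Hodge structures on the graded pieces,
\[
\dim_\C F^1H^1_{\mathrm{lim}}(X_t,\C)^\chi=\dim_\C F^1(\mathrm{Gr}^W_0)^\chi+\dim_\C F^1(\mathrm{Gr}^W_2)^\chi=0+1=1.
\]

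Finally, Schmid's nilpotent orbit theorem (\cite{Schmid}, \cite{steenbrink}) gives that the Hodge numbers $\dim_\C F^p$ of the general fibre agree with those of the limiting MHS, and this identification respects the $\chi$-decomposition because the $K$-action is horizontal; hence $\dim_\C F^1H^1(X_t,\C)^\chi=\dim_\C F^1H^1_{\mathrm{lim}}(X_t,\C)^\chi=1$. The step I expect to require the most care in writing up is the bookkeeping in the previous paragraph: checking that the $K$-action genuinely descends to $H^1_{\mathrm{lim}}$ with the graded pieces as stated — so that the $\chi$-eigenspaces make sense there and carry the claimed Hodge types — and that passage to $F^\bullet_{\mathrm{lim}}$ preserves dimensions. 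Both are standard consequences of the theory of limiting mixed Hodge structures together with \eqref{lem4}--\eqref{lem4-0}, but they are precisely what makes the argument work.
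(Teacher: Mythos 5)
Your proof is correct and follows essentially the same route as the paper: both pass to the limiting mixed Hodge structure at $t=1$, use that it is of type $(0,0)+(1,1)$ so that $F^1_\infty\cong\mathrm{Gr}^W_2\cong\Coker(N)$, and conclude from \eqref{lem4} that each $\chi$-eigenspace is one-dimensional. Your write-up merely makes explicit the compatibility checks (the $K$-action on the limit, strictness of $F^\bullet_\infty$ with respect to $W$, and the dimension-preservation from the nilpotent orbit theorem) that the paper leaves implicit.
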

\begin{proof}
It is equivalent to say that
the limiting Hodge filtration $F^1_\infty\subset H^1_{\mathrm{lim}}(X_t,\C)$ at $t=1$
satisfies $\dim_\C F^1_\infty H^1_{\mathrm{lim}}(X_t,\C)^\chi=1$.
However, since $H^1_{\mathrm{lim}}(X_t,\C)$ is a MHS of type $(0,0)$ and $(1,1)$,
one has
\begin{equation}\label{hodgefilt-lem-eq}
F^1_\infty H^1_{\mathrm{lim}}(X_t,\C)\cong
\mathrm{Gr}^W_2H^1_{\mathrm{lim}}(X_t,\C)
=H^1_{\mathrm{lim}}(X_t,\C)/\Ker(N)\cong \Coker(N).
\end{equation}
By \eqref{lem4}, each eigenspace for the multiplication by $K$ is one-dimensional.
\end{proof}
\section{Main Theorems}
Let
\[
\cM^{(l)}:=\pi_*\Q\ot R^1f_*\Q\cong \pi_*(R^1f^{(l)}_*\Q)
\]
be as in \S \ref{m1}.
Let $j:\P^1\setminus\{0,1,\infty\}\hra \P^1$ be the open immersion.
We then consider the cohomology groups
\begin{equation}\label{hm-def}
H^{(l)}:=H^1(\P^1,j_*\cM^{(l)}),\quad
M^{(l)}:=H^1(\P^1\setminus\{0,1,\infty\},\cM^{(l)})
\end{equation}
and 
\begin{equation}\label{e-def}
E:=\bigoplus_{p=0,1,\infty}E_p,\quad
E_p:=(R^1j_*\cM^{(l)})_p=\Coker[T_p-1:\cM^{(l)}\to\cM^{(l)}]
\end{equation}
where $T_p$ is the local monodromy at $p$.
They carry de Rham-Hodge structures
by the theory of Hodge modules of M. Saito (\cite{msaito1}, \cite{msaito2}).
They are also equipped 
with multiplication by $K[G^{(l)}]$.
Since $H^2(\P^1,j_*\cM)=0$ (Lemma \ref{monod-lem2}), we have an exact sequence
\begin{equation}\label{local1}
\xymatrix{
0\ar[r] &H^{(l)}\ar[r]& 
M^{(l)}\ar[r]& E\ar[r]&0
}
\end{equation}
of de Rham-Hodge structures.
The de Rham-Hodge structure $H^{(l)}$ has a Hodge decomposition of type
$(0,2)+(1,1)+(2,0)$ (\cite{zucker-ann}).
It is auto-dual, namely there is an isomorphism
\begin{equation}\label{auto-dual}
(H^{(l)})^*\cong H^{(l)}\ot\Q(2)
\end{equation}
which arises from the isomorphism 
${\mathbb D}(j_*\cM^{(l)}[1])\cong j_*\cM^{(l)}[1]\ot\Q(2)$
of Hodge modules.
However \eqref{auto-dual} is not compatible with the action of
$K[G^{(l)}]$.
There is a unique involution $K\to K$, $\alpha\mapsto{}^t\alpha$
such that the pairing $(\, ,\,)$ on $R^1f_*\Q\ot R^1f_*\Q$
satisfies $(\alpha x,y)=(x,{}^t\alpha y)$. 
Then the involution $K[G^{(l)}]\to K[G^{(l)}]$, $g=\sum\alpha \sigma
\mapsto{}^tg:=\sum{}^t\alpha\sigma^{-1}$ induces a compatible action on 
\eqref{auto-dual} in the sense that
the pairing $(\, ,\,)$ on $H^{(l)}\ot H^{(l)}$ satisfies $(gx,y)=(x,{}^tg y)$.
In particular \eqref{auto-dual} induces
\[
[(H^{(l)})^*]^{\varepsilon_k\ot\chi}=
[(H^{(l)})^{\varepsilon_k\ot\chi}]^*\cong [H^{(l)}]^{\varepsilon_{-k}\ot{}^t\chi}\ot\Q(2).
\]
where ${}^t\chi(\alpha):=\chi({}^t\alpha)$ for $\alpha\in K$.

Since the paring on $R^1f_*\Q\ot R^1f_*\Q$ is compatible with the monodromy,
one has $\alpha_j^{{}^t\chi}=-\alpha^\chi_j$ and $\beta_j^{{}^t\chi}=-\beta^\chi_j$.

\subsection{Period formula}
Let $K[(G^{(l)}]\to K_i$ be a projection and $e_i$ the associated idempotent.
Since $\vg(S,\cM^{(l)})=0$ (Lemma \ref{monod-lem1})
one has
\[\dim_{\Q}e_iM^{(l)}
=-\chi(S,e_i\cM^{(l)})
=-\chi_{\mathrm{top}}(S)\dim_\Q (e_i\cM^{(l)})=2\dim_\Q K_i.\]
This means $e_iM^{(l)}\cong K_i^{\op 2}$ as $K_i$-module.
It follows from Lemma \ref{monod-lem3} that one has $E_1\cong K[G^{(l)}]$ as $K[G^{(l)}]$-module and hence $e_iE_1\cong K_i$.
Hence 
\[
\dim_{K_i}e_iH^{(l)}=1\quad\Longleftrightarrow\quad
e_iE_0=e_iE_\infty=0.
\]
The dimension of $e_iE_0$ or $e_iE_\infty$ does depend on $i$.
If none of eigenvalues of $T_0$ and $T_\infty$ is $1$, or equivalently
none of rational numbers
\[
\frac{k}{l}+\alpha^\chi_1,\quad
\frac{k}{l}+\alpha^\chi_2,\quad
-\frac{k}{l}+\beta^\chi_1,\quad
-\frac{k}{l}+\beta^\chi_2
\]
belongs to $\Z$ for all $\varepsilon_k\ot\chi\in I_i$ (cf. \S \ref{m1}),
then $e_iE_0=e_iE_\infty=0$, and hence
$e_iH^{(l)}$ is a de Rham-Hodge structure with maximal multiplication
by $K_i$.
 
\medskip 

Our first theorem is on the periods of $e_iH^{(l)}$:
\begin{thm}[Period formula]\label{main}
Let $K[G^{(l)}]\to K_i$ be a projection and $e_i$ the associated idempotent as in \S \ref{settei-sect}.
Suppose that none of rational numbers
\begin{equation}\label{main-cond}
\frac{k}{l}+\alpha^\chi_1,\quad
\frac{k}{l}+\alpha^\chi_2,\quad
-\frac{k}{l}+\beta^\chi_1,\quad
-\frac{k}{l}+\beta^\chi_2
\end{equation}
belongs to $\Z$ for some $\varepsilon_k\ot\chi\in I_i$ (and hence for all
$\varepsilon_k\ot\chi\in I_i$ by Lemma \ref{ind-lemma}).
Then the periods of
$e_iH^{(l)}$ are given as follows.
\[
\mathrm{Period}((e_iH^{(l)})^{\varepsilon_k\ot\chi})\sim_{\ol{\Q}^\times}
2\pi i\,
\Gamma\left({k/l+\alpha^\chi_1,
k/l+\alpha^\chi_2,\atop  k/l-\beta^\chi_1,k/l-\beta^\chi_2}\right).
\]
\end{thm}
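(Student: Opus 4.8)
The plan is to exhibit explicit generators of the de Rham and Betti lines $(e_iH^{(l)}_{\dR})^{\varepsilon_k\ot\chi}$ and $(e_iH^{(l)}_B)^{\varepsilon_k\ot\chi}$, and then to evaluate the period integral by means of the Fuchsian differential equation satisfied by the Gauss--Manin periods of $f^{(l)}$.

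\emph{De Rham generator.} Under hypothesis \eqref{main-cond} one has $e_iE_0=e_iE_\infty=0$, so by the exact sequence \eqref{local1} the line $(e_iH^{(l)}_{\dR})^{\varepsilon_k\ot\chi}$ is the kernel of the residue map from $(e_iM^{(l)}_{\dR})^{\varepsilon_k\ot\chi}=H^1_{\dR}(S,\cM^{(l)})^{\varepsilon_k\ot\chi}$ to $(e_iE_1)^{\varepsilon_k\ot\chi}$. Since $S$ is affine, every de Rham class is represented by a rational $1$-form $\eta\ot\frac{dt}{t(1-t)}$, where $\eta$ is a rational section of the $\varepsilon_k\ot\chi$-component of the Deligne extension of $\cM^{(l)}_{\dR}$, equivalently a rational relative $1$-form on the fibres of $f^{(l)}$. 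First I would pin down $\eta$ by prescribing its pole orders at $t=0,1,\infty$ so that it lies in the correct step of the Hodge filtration (using Lemma \ref{hodgefilt-lem} to locate $F^1$ on the fibres) and so that its residue at $t=1$ vanishes; the local exponents of $\nabla$ then come out to be $k/l+\alpha^\chi_j$ at $0$, $-k/l+\beta^\chi_j$ at $\infty$, and $0$ (with a logarithmic term) at $1$.

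\emph{Fuchs equation, Betti generator, evaluation.} By Lemma \ref{Ev-cor}, $H_1(X_t,\C)^\chi$ is an irreducible rank-two $\C[\pi_1(S^{(l)},t)]$-module, so for a flat local frame $\gamma_{1,t},\gamma_{2,t}$ the Gauss--Manin periods $\varphi_a(t)=\int_{\gamma_{a,t}}\eta$ span the solution space of a second-order Fuchsian equation on $S$ with regular singular points $0,1,\infty$ and exponents read off from $\alpha^\chi_j,\beta^\chi_j$; conditions {\bf(a)}--{\bf(c)} make this local system rigid, so each $\varphi_a$ is an algebraic multiple of a Gauss hypergeometric function with those parameters. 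Next, a generator of $(e_iH^{(l)}_B)^{\varepsilon_k\ot\chi}$ is produced, via the local invariant cycle theorem at $t=1$ together with Lemma \ref{Ev-lem}, from the vanishing cycle $\mathrm{Ev}^\chi$, transported along a path issuing from $t=1$ and closed up into a $1$-cycle of $S$ with coefficients in the dual local system. Pairing this cycle against $\eta\ot\frac{dt}{t(1-t)}$ rewrites the period as an iterated integral $\int(\text{Gauss--Manin period})\cdot t^{a}(1-t)^{b}\,\frac{dt}{t(1-t)}$ over that contour, hence as a value of a hypergeometric series ${}_3F_2$ at $1$ (the logarithm at $t=1$ being absorbed by the factor $(1-t)^{b}$). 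Finally, the balancing relation $\alpha^\chi_1+\alpha^\chi_2+\beta^\chi_1+\beta^\chi_2\in\Z$, forced by $\det(R^1f_*\Q)^\chi$ being a rank-one local system on the thrice-punctured line, makes this ${}_3F_2$ summable in closed form and collapses it to $\Gamma\!\left({k/l+\alpha^\chi_1,\,k/l+\alpha^\chi_2\atop k/l-\beta^\chi_1,\,k/l-\beta^\chi_2}\right)$, which (numerator and denominator parameters having equal sum) is a ratio of two beta values. The extra factor $2\pi i$ arises from integrating $\frac{dt}{t(1-t)}$ around the loop in $S$, equivalently from the Tate twist visible in $E_1\cong\Q(-2)^{\oplus gl}$ (Lemma \ref{monod-lem3}).

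\emph{Normalization and the main obstacle.} The hypergeometric manipulations above (Fuchs relation, Euler's integral, classical summation) are routine once the framework is in place; the real difficulty is to control the comparison isomorphism $\iota$ precisely enough that $\iota(e_{\dR})/e_B$ carries no transcendental factor besides the $\Gamma$-quotient. This requires (i) that the explicit $\eta$ generates the de Rham eigenline over $\ol\Q$ and lies in the right $F^p$; (ii) that the transported vanishing cycle generates the Betti eigenline over $\Q$; and (iii) control of the connection constants of the hypergeometric solutions, i.e.\ of the transition between the flat $\ol\Q$-frame $\gamma_{a,t}$ and the distinguished hypergeometric solutions. For (iii) one invokes Schmid's theory: since $H^1_{\mathrm{lim}}(X_t,\C)^\chi$ is mixed Tate by \eqref{lem4-0}, with $F^1_\infty\cong\Coker(N)$ as in \eqref{hodgefilt-lem-eq}, its limiting period matrix is defined over $\ol\Q$, which forces the transition constants to be algebraic, so the only transcendence surviving in the period is the beta integral. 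I expect (iii), together with the bookkeeping it forces in (i)--(ii), to be the principal obstacle.
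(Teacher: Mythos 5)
Your overall strategy coincides with the paper's: identify the Gauss--Manin periods of a rational relative $1$-form with Gauss hypergeometric functions via the Riemann scheme (the paper's Key Lemma 1), build the Betti generator as a Lefschetz thimble swept out by the vanishing cycle over $[0,1]$ and lifted using $H_1(D_0^{(l)})^{\varepsilon_k\ot\chi}=0$, and evaluate the resulting Euler-type integral by reducing a ${}_3F_2(1)$ to a ${}_2F_1(1)$ and applying Gauss's theorem. Two points, however, are genuine gaps rather than routine bookkeeping.

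First, and most seriously, your resolution of obstacle (iii) is incorrect. The transition constant $\lambda_0$ between the geometric period $f_1(t)=\int_{\delta_t}\omega$ and the normalized hypergeometric solution $F_1(t)=t^{\alpha_1}{}_2F_1(\alpha_1+\beta_1,\alpha_1+\beta_2;1;1-t)$ is \emph{not} algebraic, and Schmid's theory does not force it to be: because the limiting MHS at $t=1$ is mixed Tate of types $(0,0)$ and $(1,1)$, the limiting period matrix has entries involving $2\pi i$ and logarithms of algebraic numbers, not merely algebraic numbers. In fact $\lambda_0\in 2\pi i\,\ol{\Q}^\times$ (the paper's Key Lemma 2), and \emph{this} is the source of the factor $2\pi i$ in the period formula. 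The paper proves it not by Schmid but by the algebraic uniformization of the totally degenerate Albanese at $t=1$ over $\ol{\Q}[\varepsilon]/(\varepsilon^r)$, where $\int_{\delta}\frac{du_j}{u_j}\in 2\pi i\,\Q$ while $F_1$ has rational Taylor coefficients at $t=1$. Your two alternative explanations of the $2\pi i$ (a residue of $\frac{dt}{t(1-t)}$ around a loop, or the Tate twist in $E_1\cong\Q(-2)^{\oplus gl}$) do not substitute: the period integral is taken over the thimble above the segment $[0,1]$, not around a loop, and the Tate twist of $E_1$ concerns the quotient $E$ of $M^{(l)}$, not the subobject $H^{(l)}$ whose period is being computed. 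As written, your normalization argument would output a period $\sim_{\ol{\Q}^\times}\Gamma(\cdots)$ with the wrong transcendence class.

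Second, you list as requirement (i) that the explicit class generates the de Rham eigenline, but give no argument. This is the non-vanishing of the period integral, which in the paper requires a separate combinatorial lemma: the integral equals $C_m\cdot\Gamma(\cdots)$ for an explicit algebraic constant $C_m$ depending on the (uncontrolled) differential operator $\theta=q_0+q_1\frac{d}{dt}$ produced by the Riemann--Hilbert correspondence, and one must prove $C_m\ne 0$ for some (in fact infinitely many) $m$ via a determinant/polynomial-divisibility argument using $\alpha_i+\beta_j\notin\Z$. Without this step one cannot conclude that the computed quantity is the period rather than $0$.
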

We note that the auto-duality \eqref{auto-dual} yields
\begin{equation}\label{auto-dual-per}
(2\pi i)^2\mathrm{Period}([e_iH^{(l)}]^{\varepsilon_k\ot\chi})^{-1}
\sim_{\ol{\Q}^\times}\mathrm{Period}([e_iH^{(l)}]^{\varepsilon_{-k}\ot{}^t\chi}).
\end{equation}
One can directly check it on noting
$\alpha_j^{{}^t\chi}=-\alpha^\chi_j$, $\beta_j^{{}^t\chi}=-\beta^\chi_j$
and $\Gamma(x)\Gamma(1-x)=\pi/\sin(\pi x)$.


\begin{rem}
J. Fres\'{a}n and the first author recently verified the period conjecture of Gross-Deligne
for the determinant of cohomology groups, and it includes our motive $H^{(l)}$.
In particular, it is proven that the Hodge type $p_{\varepsilon_k\ot \chi}$ of $H^{(l)}$
is given as follows,
\[
p_{\varepsilon_k\ot \chi}=1+\left\{\frac{k}{l}+\a^\chi_1\right\}
+\left\{\frac{k}{l}+\a_2^\chi\right\}
-\left\{\frac{k}{l}-\b^\chi_1\right\}-\left\{\frac{k}{l}-\b_2^\chi\right\}
\]
where $\{x\}:=x-\lfloor x\rfloor$ is the fractional part.
\end{rem}
\subsection{Regulator formula}
Our second main result is on the extension data of the exact sequence \eqref{local1}.

\medskip

Again let $K[G^{(l)}]\to K_i$ and $e_i$ satisfy the assumption in Theorem \ref{main}.
Recall from Lemma \ref{monod-lem3} that $E_1$ is isomorphic to
a direct sum of $\Q(-2)$ as a de Rham-Hodge structure.
The exact sequence
\eqref{local1} gives rise to the connecting homomorphism
\begin{equation}\label{connecting-def}
\rho:E_1(2)\lra \Ext^1_\MdRH(\Q,H^{(l)}(2)),\quad (V(j):=V\ot\Q(j))
\end{equation}
where $\Ext_\MdRH$ denotes the Yoneda extension group in the category of 
mixed de Rham-Hodge structures.
For $\varepsilon_k\ot\chi\in I_i$,
let $\delta_{\varepsilon_k\ot\chi}:=\dim_{\ol\Q}[e_iF^2H_\dR^{(l)}]^{\varepsilon_k\ot\chi}
=0$ or $1$.
We define $\rho^{\varepsilon_k\ot\chi}$ to be the composition of $\rho$ and 
\begin{align*}
\Ext^1_\MdRH(\Q,H^{(l)}(2))&\to
\Ext^1_\MdRH(\Q,e_iH^{(l)}(2))\quad \mbox{(projection)}\\
&\cong 
\Coker[
e_iH_B^{(l)}(2)\to \C\ot_{\ol\Q}(e_iH_\dR^{(l)}/F^2)]\\
&\to
\Coker[
(e_iH_B^{(l)}(2))^{\varepsilon_k\ot\chi}\to
\C\ot_{\ol\Q}(e_iH_\dR^{(l)}/F^2)^{\varepsilon_k\ot\chi}]\\
&\cong \C/[\ol{\Q}\delta_{\varepsilon_k\ot\chi}+\ol{\Q}\cdot(2\pi i)^2\mathrm{Period}([e_iH^{(l)}]^{\varepsilon_k\ot\chi})^{-1}]
\\
&\cong \C/
[\ol{\Q}\delta_{\varepsilon_k\ot\chi}+
\ol{\Q}\cdot\mathrm{Period}([e_iH^{(l)}]^{\varepsilon_{-k}\ot{}^t\chi}]
\end{align*}
where the second isomorphism is given with respect to a $\ol{\Q}$-basis of
$(e_iH_\dR^{(l)})^{\varepsilon_k\ot\chi}\cong\ol{\Q}$
and the last isomorphism follows from \eqref{auto-dual-per}.
Obviously $\rho^{\varepsilon_k\ot\chi}$ factors through $e_iE_1(2)=e_iE(2)$ or
the $\varepsilon_k\ot\chi$-part $[e_iE_1(2)]^{\varepsilon_k\ot\chi}\cong\ol{\Q}$.
\begin{thm}[Regulator formula]\label{main-reg}
Let the notation and the assumption be
as in Theorem \ref{main}.
There is a complex number $c=c_{f,\chi}\in 
\ol{\Q}+2\pi i\ol{\Q}+
\sum_{a\in\ol{\Q}^\times}\ol{\Q}\log(a)$ depending only on $f:X\to\P^1$
and $\chi$ such that the following holds.
Let $\varepsilon_k\ot\chi\in I_i$ and $x\in e_iE(2)=e_iE_1(2)$.
Then $\rho^{\varepsilon_{-k}\ot{}^t\chi}(x)$ is
a $\ol\Q$-linear combination of 
\begin{equation}\label{main-reg-term0}
1,\quad
c \cdot\Gamma\left({\alpha^\chi_1+k/l,
\alpha^\chi_2+k/l \atop k/l-\beta^\chi_1, k/l-\beta^\chi_2}\right), 
\end{equation}
and
\begin{equation}\label{main-regterm}
B(\alpha^\chi_1+\beta^\chi_1,\alpha^\chi_1+\beta^\chi_2)~
{}_3F_2\left(\begin{matrix}\alpha^\chi_1+\beta^\chi_1,\alpha^\chi_1+\beta^\chi_2,
\alpha^\chi_1+k/l\\
2\alpha^\chi_1+\beta^\chi_1+\beta^\chi_2,\alpha^\chi_1+k/l+1\end{matrix};1
\right).
\end{equation}
In addition the coefficient of \eqref{main-regterm} is non-zero unless $x=0$.
\end{thm}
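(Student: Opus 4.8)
The strategy is to reduce the computation of $\rho^{\varepsilon_{-k}\otimes{}^t\chi}(x)$ to an explicit period integral of a relative $1$-form over a path of vanishing cycles, and then recognize the resulting integral as an incomplete beta integral, hence a ${}_3F_2$ value via Euler's integral representation. First I would fix a generator of $e_iE_1(2)=[e_iE_1(2)]^{\varepsilon_k\otimes\chi}$, which by Lemma~\ref{monod-lem3} corresponds to a local section of $\cM^{(l)}$ near $t=1$ that spans $\mathrm{Coker}[T_1-1]$; concretely this is (the class of) an explicit rational relative $1$-form $\omega$ on $U^{(l)}$ lying in the $\varepsilon_k\otimes\chi$-eigenspace, normalized using the Fuchs equation satisfied by its periods. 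The connecting map $\rho$ of the localization sequence \eqref{local1} is then computed by lifting $\omega$ to a $C^\infty$ relative form on a punctured neighborhood, integrating against a topological $1$-cycle in $S$ emanating from $t=1$, and extracting the image in $\mathrm{Ext}^1_\MdRH(\Q,e_iH^{(l)}(2))\cong \C/[\ol\Q\,\delta + \ol\Q\cdot\mathrm{Period}([e_iH^{(l)}]^{\varepsilon_{-k}\otimes{}^t\chi})]$, exactly as in the framework set up just before the theorem.

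The heart of the argument is the evaluation of that period integral. By Lemma~\ref{Ev-lem} and Lemma~\ref{Ev-cor}, the $\chi$-part of homology is generated over $\C[\pi_1(S^{(l)},t)]$ by a single vanishing cycle at $t=1$, so the relevant cycle can be transported to a real path, say from $t=1$ toward $t=0$ along $(0,1)$, with the fiberwise integration of $\omega$ over the vanishing cycle producing a multivalued function whose local exponents at $0,1,\infty$ are governed by the monodromy data $\alpha^\chi_j,\beta^\chi_j$ and the twist $k/l$. Using the explicit shape of $\omega$ — a form of type $t^{a}(1-t)^{b}(\text{fiber part})$ after choosing affine coordinates adapted to the $K[G^{(l)}]$-action — the outer integral over $(0,1)$ becomes $\int_0^1 t^{\alpha^\chi_1+k/l-1}(1-t)^{\alpha^\chi_1+\beta^\chi_1-1}\,{}_2F_1(\cdots;t)\,dt$ (the inner ${}_2F_1$ coming from the fiberwise period, already computed in the period-formula part of the paper via Fuchs equations). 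Expanding the ${}_2F_1$ termwise and integrating term by term produces precisely $B(\alpha^\chi_1+\beta^\chi_1,\alpha^\chi_1+\beta^\chi_2)\,{}_3F_2(\alpha^\chi_1+\beta^\chi_1,\alpha^\chi_1+\beta^\chi_2,\alpha^\chi_1+k/l;2\alpha^\chi_1+\beta^\chi_1+\beta^\chi_2,\alpha^\chi_1+k/l+1;1)$ by Euler's integral formula for ${}_3F_2$; the constant term $1$ and the $c\cdot\Gamma(\cdots)$ term arise from the ambiguity in the target $\C/[\ol\Q\delta+\ol\Q\cdot(\text{period})^{-1}]$ and from the choice of normalizing constant and base-point contributions (branches of logarithms, factors of $2\pi i$), which accounts for the stated shape of $c=c_{f,\chi}$.

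For the non-vanishing of the coefficient of \eqref{main-regterm}, the point is that this coefficient is (up to $\ol\Q^\times$) the normalizing constant relating $\omega$ to the chosen generator of $e_iE_1(2)$, times the period of the fiberwise $1$-form, none of which vanish under the hypothesis \eqref{main-cond} (the exponents being non-integral guarantees $e_iE_0=e_iE_\infty=0$ and that the relevant $\Gamma$-factors and beta-integral are finite and nonzero). Equivalently: if $x\neq 0$ then $x$ generates the one-dimensional space $[e_iE_1(2)]^{\varepsilon_k\otimes\chi}$, and the surjectivity of $T_1-1$ onto the coker that defines $E_1$ forces the period integral to be a nonzero multiple of the displayed ${}_3F_2$-term modulo $\ol\Q\cdot 1+\ol\Q\cdot c\Gamma(\cdots)$. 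The main obstacle I anticipate is not the formal homological algebra of the connecting map but the bookkeeping in the second step: matching the normalization of $\omega$ coming from the Fuchs-equation analysis (so that it represents the specified generator of $E_1$, and so that one controls the ambiguity up to $\ol\Q^\times$ rather than merely $\C^\times$) with the Euler-integral identity, and in particular pinning down that the ${}_2F_1$ appearing inside is exactly the fiberwise period computed earlier — this is where Dixon's formula on ${}_3F_2$, mentioned in the introduction, will be needed to simplify an otherwise intractable hypergeometric combination and to isolate the single ${}_3F_2$ in \eqref{main-regterm}.
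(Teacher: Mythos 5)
Your overall framework---realizing the connecting map as a period integral of a rational $2$-form over a Lefschetz thimble swept along $[0,1]$, then producing a ${}_3F_2$ via Euler's integral representation---is the one the paper uses, but you integrate over the wrong cycle, and this is not a normalization issue. You sweep the \emph{vanishing} cycle $\delta_t$ from $t=1$; but $\delta_t$ is the $T_1$-invariant cycle, so its class dies in $E_1=\Coker(T_1-1)$ and the resulting thimble has trivial boundary in $H_1(D^{(l)}_{ss})$: it is not a lift of a nonzero $x\in e_iE_1(2)$ and it computes the \emph{period} of Theorem \ref{main}, not the extension class. Concretely, $\int_{\delta_t}\omega=f_1(t)=\lambda_0\theta F_1(t)$ with $F_1$ built from ${}_2F_1(\alpha_1+\beta_1,\alpha_1+\beta_2;1;1-t)$, and the Euler integral then closes up into a product of $\Gamma$-values (Proposition \ref{pff1-5-1}); you can never reach the lower parameter $2\alpha_1+\beta_1+\beta_2$ of \eqref{main-regterm} this way. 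The paper instead sweeps the complementary cycle $\gamma_t$ with $T_1\gamma_t=\gamma_t+\delta_t$ (\S \ref{gamma-sect}), whose boundary generates $e_iH_1(D^{(l)}_{ss})\cong e_iE_1$; the fiberwise period is then $f_2=\lambda_1\theta F_1+\lambda_2\theta F_2$ with $F_2=t^{\alpha_1}\,{}_2F_1(\alpha_1+\beta_1,\alpha_1+\beta_2;2\alpha_1+\beta_1+\beta_2;t)$, and it is the $F_2$-piece (with no $(1-t)$-weight in the outer integral) that yields \eqref{main-regterm}.

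Two further essential ingredients are missing. First, the constants $\lambda_1,\lambda_2$ in Key Lemma 1 are a priori arbitrary complex numbers; the precise shape of the answer (the prefactor $B(\alpha_1+\beta_1,\alpha_1+\beta_2)$ in \eqref{main-regterm} and the membership $c\in\ol{\Q}+2\pi i\ol{\Q}+\sum_a\ol{\Q}\log(a)$) requires $\lambda_2\in\ol{\Q}^\times\cdot B(\alpha_1+\beta_1,\alpha_1+\beta_2)$ and the corresponding control of $\lambda_1$, which the paper obtains from the limiting mixed Hodge structure at $t=1$ (Schmid's nilpotent orbit theorem and Hoffman's theorem; Lemmas \ref{key2}, \ref{schmid-lem}, \ref{key3}). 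Attributing $c$ to ``branches of logarithms and base-point contributions'' does not substitute for this analysis. Second, the Euler integrals produce a $\ol{\Q}$-linear combination of \emph{many} ${}_3F_2$-values $K_{m+i}$ with shifted parameters; collapsing them to the single value \eqref{main-regterm} requires the three-term contiguous relations (Lemma \ref{otsubo2}), and the nonvanishing of the resulting coefficient is not automatic: it holds because that coefficient is identified with the constant $C_m$ of Proposition \ref{pff1-5-1}, whose nonvanishing for infinitely many $m$ is a separate determinant argument. Your nonvanishing paragraph asserts the conclusion rather than proving it.
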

There is an alternative description of the $\varepsilon_k\ot \chi$-part 
of $\rho$.
Let $\mathrm{Filt}_{\ol\Q}$
be the category of finite dimensional $\ol\Q$-modules equipped with finite decreasing filtration.
Let $\mathrm{Vec}_{\ol\Q}$ (resp. $\mathrm{Vec}_\C$) be the
category of finite dimensional $\ol\Q$-modules (resp. $\C$-modules).
Let $\mathrm{MF}=\mathrm{MF}_{\dR,B}:=\mathrm{Vec}_{\ol\Q}
\times_{\mathrm{Vec}_\C}\mathrm{Filt}_{\ol\Q}$ whose objects consist of
$M=(M_\dR,M_B,F^\bullet,\iota)$ where $M_B\in \mathrm{Vec}_{\ol\Q}$
and $(M_\dR,F^\bullet)\in \mathrm{Filt}_{\ol\Q}$ and $\iota:
\C\ot M_\dR\cong \C\ot M_B$ is a comparison isomorphism.
This is not abelian but is an {\it exact category} in which all morphisms have kernel and cokernel. Therefore one can discuss the Yoneda extension
groups $\Ext^j_{\mathrm{MF}}(M',M)$ (see also \cite{BBDG} 1.1 for
the derived category of $\mathrm{MF}$).
In a similar way to \cite{beilinson} one can show that there is a natural isomorphism
\[
\Ext^1_{\mathrm{MF}}(\ol\Q,M)\cong (\C\otimes M_\dR)/(F^0M_\dR+\iota^{-1}(M_B))
\]
where $\ol\Q$ denotes the trivial one-dimensional object.
There is an exact functor from 
the category of mixed de Rham-Hodge structures with multiplication by $K[G^{(l)}]$
to $\mathrm{MF}$ given by
$M\mapsto M^{\varepsilon_k\ot\chi}$.
This induces
\[
\Ext^1(\ol\Q,H)\lra
\Ext^1_{\mathrm{MF}}(\ol\Q,H^{\varepsilon_k\ot\chi})
\cong (\C\otimes H^{\varepsilon_k\ot\chi}_\dR)
/(F^0H^{\varepsilon_k\ot\chi}_\dR+\iota^{-1}(H_B^{\varepsilon_k\ot\chi}))
\]
where the first $\Ext$ is the Yoneda extension group in 
the category of mixed de Rham-Hodge structures with multiplication by $K[G^{(l)}]$.
The exact sequence
\eqref{local1} gives rise to the connecting homomorphism $E_1(2)\to
\Ext^1(\ol\Q,H)$, and then $\rho^{\varepsilon_k\ot\chi}$ is the composition of this
with the above.
\subsection{Motivic interpretation of the mixed Hodge structure $M^{(l)}$}
The connecting homomorphism \eqref{connecting-def} describes
{\it Beilinson's regulator map} on a motivic cohomology group.

Put $D^{(l)}_0:=(f^{(l)})^{-1}(0)$,
$D^{(l)}_\infty:=(f^{(l)})^{-1}(\infty)$,
$D^{(l)}_i:=(f^{(l)})^{-1}(\zeta_l^i)$ ($1\leq i\leq l$)
and $D^{(l)}:=D_0^{(l)}+D_\infty^{(l)}+\sum D^{(l)}_i$.
Put $U^{(l)}=X^{(l)}\setminus D^{(l)}$.
The exact sequence \eqref{local1} sits into the following commutative diagram
\begin{equation}\label{local1-com}
\xymatrix{
&0\ar[d]&0\ar[d]\\
0\ar[r]& H^{(l)}\ar[r]\ar[d]^\iota& M^{(l)}\ar[r]\ar[d]& E\ar[r]\ar[d]^\cap&0\\
0\ar[r]& H^2(X^{(l)})/\langle D^{(l)}\rangle\ar[r]\ar[d]& H^2(U^{(l)})\ar[r]\ar[d]
& H^3_{D^{(l)}}(X^{(l)})\\
& H^2(X^{(l)}_t)\ar@{=}[r]& H^2(X_t^{(l)})
}
\end{equation}
where $X^{(l)}_t=(f^{(l)})^{-1}(t)$ is the general fiber and
$\langle D^{(l)}\rangle$ is the subgroup generated by the cycle classes of the irreducible components
of $D^{(l)}$.
\begin{prop}
Put $D^{(l)}_{ss}:=D^{(l)}-(D^{(l)}_0+D^{(l)}_\infty)$.
Then the diagram
\[
\xymatrix{
H_{\cM,D_{ss}}^3(X^{(l)},\Q(2))\ar[r]\ar[d]_{\reg_{D_{ss}}}
&H_{\cM}^3(X^{(l)},\Q(2))
\ar[d]^\reg\\
E_1(2)\ar[r]^{\iota\circ\rho\qquad \qquad}&
\Ext^1_\MHS(\Q,(H^2(X^{(l)})/\langle D^{(l)})(2)\rangle)
}\]
is commutative up to sign.
Moreover the map $\reg_{D_{ss}}$ is surjective.
\end{prop}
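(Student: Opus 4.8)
The plan is to identify the motivic cohomology group $H^3_{\cM,D_{ss}}(X^{(l)},\Q(2))$ concretely and then compute the regulator through the localization sequences that produce the diagram \eqref{local1-com}. First I would invoke the localization sequence in motivic cohomology associated with the open immersion $U^{(l)}\hra X^{(l)}$, together with the decomposition $D^{(l)}=D^{(l)}_0+D^{(l)}_\infty+D^{(l)}_{ss}$, to realize $H^3_{\cM,D_{ss}}(X^{(l)},\Q(2))$ as a sum of $H^1$ of the components of $D^{(l)}_{ss}$; since each such component $D^{(l)}_i$ ($1\le i\le l$) is (birationally) a smooth projective variety, $H^1_\cM$ of it with $\Q(1)$-coefficients is computed by $K$-theory of function fields, i.e.\ $\O^\times\ot\Q$ type data, and the relevant Tate twist bookkeeping $H^3_{\cM,D_{ss}}(X^{(l)},\Q(2))\cong\bigoplus_i H^1_\cM(D^{(l)}_i,\Q(1))$ falls out of Gysin purity. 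The commutativity of the square is then the assertion that the regulator map is a morphism of localization sequences: Beilinson's regulator is compatible with Gysin maps and boundary maps, so the outer square of regulators sits over the outer square of localization sequences, and restricting the bottom-right Deligne cohomology group along the quotient $H^2(X^{(l)})\to H^2(X^{(l)})/\langle D^{(l)}\rangle$ and along the identification $H^3_\cD(\cdot,\Q(2))\supset\Ext^1_\MHS(\Q,H^2(\cdot)(2))$ yields exactly $\iota\circ\rho$. This is a diagram chase once the functoriality of $\reg$ under localization is in hand.

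More precisely, I would proceed in these steps. (1) Write the localization triangle for $(X^{(l)},U^{(l)},D^{(l)})$ in both motivic and Deligne cohomology and check, using Lemma~\ref{lem6} ($H^1_B(X^{(l)},\Q)=0$) and weight reasons, that the relevant portion reads $E_1(2)\to \Ext^1_\MHS(\Q,H^2(U^{(l)})(2))$ and maps down compatibly to $E\to\Ext^1_\MHS(\Q,\cdot)$ via \eqref{local1-com}; here I use that $E_0$ and $E_\infty$ contribute to the part supported on $D_0^{(l)}\cup D_\infty^{(l)}$, so the $D_{ss}$-supported cohomology sees only $E_1$. (2) Match the boundary map in motivic cohomology $H^3_{\cM,D_{ss}}\to H^3_\cM(X^{(l)},\Q(2))$ with the topological boundary $E_1(2)\to\Ext^1_\MHS$ via the compatibility of $\reg$ with Gysin sequences (this is standard, e.g.\ from the simplicial description of the regulator, or from Beilinson's original construction — I would cite the same sources used for the existence of $\reg$). (3) Compose with the projection $H^2(X^{(l)})\to H^2(X^{(l)})/\langle D^{(l)}\rangle$, which on $\Ext^1_\MHS$ is induced by the quotient of MHS, to land in the stated target, giving the claimed commutative square up to sign (the sign ambiguity coming from the usual orientation conventions in the Gysin/boundary maps).

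For surjectivity of $\reg_{D_{ss}}$, the key point is that $\reg_{D_{ss}}$ is, after the identification in step (1), a direct sum over $1\le i\le l$ of regulator maps $H^1_\cM(D^{(l)}_i,\Q(1))\to H^1_\cD(D^{(l)}_i,\Q(1))$, and for a smooth projective variety $Y$ this last map $\O(Y)^\times\ot\Q \to \Hom(H_1(Y,\Q),\C/\Q(1))$-type map is $\dlog$, whose image generates; more to the point, after projecting to the $E_1(2)$-summand — which by Lemma~\ref{monod-lem3} is a sum of copies of $\Q(-2)$, i.e.\ the relevant Hodge structure is a sum of Tate objects — the target $E_1(2)$ is a pure Tate piece and the image of $\dlog$ on the fibers $D^{(l)}_i$, which contain enough rational functions with prescribed divisors along the components of the special fiber, surjects onto it. Concretely, I would exhibit explicit elements: the fiber $D^{(l)}_i$ over $\zeta_l^i$ degenerates with maximally unipotent monodromy (condition \textbf{(c)}), so it is a normal crossing union whose dual complex carries the class generating $E_1$, and the "difference of two components" gives a function whose $\dlog$ hits the corresponding generator — this realizes $E_1(2)$ in the image. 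I expect the main obstacle to be step (2): pinning down the precise compatibility (including signs) between the motivic boundary map and the topological connecting homomorphism $\rho$ of \eqref{connecting-def}, since this requires either a clean cited statement of the functoriality of Beilinson's regulator under localization/Gysin sequences or an explicit check at the level of complexes computing Deligne cohomology, and the weight-filtration arguments needed to see that the $D_0,D_\infty$ contributions genuinely decouple must be done carefully.
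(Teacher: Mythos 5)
Your overall strategy coincides with the paper's: the commutativity of the square is exactly the compatibility of Beilinson's regulator with localization/Gysin sequences (the paper does not redo this diagram chase but cites Asakura--Sato, \emph{Chern class and Riemann-Roch theorem for cohomology without homotopy invariance}, 11.2, for precisely this statement), and the surjectivity is reduced to the surjectivity of $\dlog$ on units. However, there is a genuine error in your step (1). The identification $H^3_{\cM,D_{ss}}(X^{(l)},\Q(2))\cong\bigoplus_i H^1_\cM(D^{(l)}_i,\Q(1))$ does not ``fall out of Gysin purity'': purity requires the supporting subvariety to be smooth, whereas each $D^{(l)}_i$ is a \emph{degenerate} fiber --- by hypothesis \textbf{(c)} the monodromy at the points of $\mu_l$ is maximally unipotent, so these fibers are singular. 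You in fact acknowledge this at the end of your argument when you describe $D^{(l)}_i$ as a normal crossing union, which is incompatible with the smoothness your purity step needs. The correct description, which the paper uses, comes from the Gersten/coniveau computation: with $D_{ss}^\circ\subset D_{ss}$ the regular locus and $Z:=D_{ss}\setminus D_{ss}^\circ$, there is an exact sequence $0\to H^3_{\cM,D_{ss}}(X^{(l)},\Q(2))\to \O(D_{ss}^\circ)^\times\ot\Q\to \Q Z$; that is, the group consists of units on the \emph{regular locus} whose divisor along the singular locus vanishes, not of units on the full singular components.

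Once this is corrected, the surjectivity argument proceeds as in the paper: compare the sequence above with the topological localization sequence $0\to H^3_{D_{ss}}(X^{(l)},\Q(2))\to H^3_{D_{ss}^\circ}(X^{(l)}\setminus Z,\Q(2))\to H^4_Z(X^{(l)},\Q(2))$, identify the middle term with $H^1(D_{ss}^\circ,\Q(1))$, and use that $\dlog$ surjects onto the $(0,0)$-part $H^1(D_{ss}^\circ,\Q(1))\cap H^{0,0}$. Since $E_1(2)=H^3_{D_{ss}}(X^{(l)},\Q(2))\cap H^{0,0}$ and $\Q Z\hra H^4_Z(X^{(l)},\Q(2))$ is injective, a $\dlog$-preimage of a class coming from $E_1(2)$ automatically has trivial divisor on $Z$, hence lifts to $H^3_{\cM,D_{ss}}(X^{(l)},\Q(2))$. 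Your ``difference of two components'' heuristic points in the right direction but is both vaguer and unnecessary once the $(0,0)$-surjectivity of $\dlog$ is invoked; the part of your argument that genuinely needs repair is the identification of the motivic cohomology group with support in the singular divisor $D_{ss}$.
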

\begin{proof}
See \cite{sato} 11.2 for the commutativity.
We can see the surjectivity of $\reg_{D_{ss}}$ in the following way.
Let $D_{ss}^\circ\subset D_{ss}$ be the regular locus and put 
$Z:=D_{ss}\setminus D_{ss}^\circ$.
There is a commutative diagram
\[
\xymatrix{
0\ar[r]&H^3_{\cM,D_{ss}}(X^{(l)},\Q(2))\ar[d]_{\reg_{D_{ss}}}\ar[r]&
\O(D_{ss}^\circ)^\times\ot\Q\ar[r]\ar[d]^{\mathrm{dlog}}
&\Q Z\ar@{^{(}->}[d]\\
0\ar[r]&H^3_{D_{ss}}(X^{(l)},\Q(2))\ar[r]&
H^3_{D_{ss}^\circ}(X^{(l)}\setminus Z,\Q(2))\ar[r]
&H^4_Z(X^{(l)},\Q(2))\\
&&H^1(D_{ss}^\circ,\Q(1))\ar[u]_\cong
}
\]
with exact rows.  As is easily shown, $\mathrm{dlog}$ is surjective
onto $H^1(D_{ss}^\circ,\Q(1))\cap H^{0,0}$.
Hence so is $\reg_{D_{ss}}$ onto 
$H^3_{D_{ss}}(X^{(l)},\Q(2))\cap H^{0,0}=E_1(2)$.
\end{proof}


\section{Key Lemmas}
In this section we prove three lemmas which play key roles in the proof of
Theorems \ref{main} and \ref{main-reg}. 
Let the notation be as in \S \ref{settei-sect}.
We fix an arbitrary embedding $\chi:K\hra \C$ throughout this section, and 
simply write $\alpha_j=\alpha^\chi_j$ and $\beta_j=\beta^\chi_j$.

\subsection{Key Lemma 1}
Let $(\cH:=R^1f_*\Omega^\bullet_{U/S},\nabla)$ be the connection
on $S=\P^1\setminus\{0,1,\infty\}$ with regular singularities at $t=0,1,\infty$.
The number field $K$ acts on $(\cH,\nabla)$.
Let
\[
(\cH^\chi,\nabla)\subset (\cH,\nabla)
\]
be the $\chi$-part, a connection of rank 2.
The Hodge filtration $F^1\cH^\chi$ is a subbundle of rank 1
(Lemma \ref{hodgefilt-lem}). 

Fix a relative $1$-form 
$\omega\ne0\in \vg(S,F^1\cH^\chi)\subset\vg(U,\Omega^1_{U/S})$ which is defined over $\ol{\Q}$.
Let $N:H_1(X_t,\Q)\to H_1(X_t,\Q)$ be the log monodromy at $t=1$.
The eigenvalues
of the local monodromy on $H_1(X_t,\C)^\chi\cong \C^2$ at $t=0$
(resp. $t=\infty$) is written as $\{e^{2\pi i\alpha_1},e^{2\pi i\alpha_2}\}$ 
(resp. $\{e^{2\pi i\beta_1},e^{2\pi i\beta_2}\}$).

\begin{lem}\label{lem-basis}
There exists a basis $\{\delta_t, \gamma_t\}$ of $H_1(X_t,\ol{\Q})^\chi$ such that
\begin{align}
&
(T_1(\delta_t),T_1(\gamma_t))=
(\delta_t,\gamma_t)\begin{pmatrix}1&1\\0&1\end{pmatrix}, \label{T_1}
\\&
(T_0(\delta_t),T_0(\gamma_t))=
(\delta_t,\gamma_t)\begin{pmatrix}e^{2\pi i\alpha_2}&0\\ \varepsilon&
e^{2\pi i\alpha_1}\end{pmatrix}, \label{T_0}
\end{align}
for some $\varepsilon\in\ol{\Q}$, $\varepsilon\ne 0$.
We have $\alpha_1+\alpha_2+\beta_1+\beta_2 \in \Z$ and $\alpha_i+\beta_j \not\in \Z$ for any $i$, $j$. 
\end{lem}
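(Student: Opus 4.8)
The plan is to read the basis $\{\delta_t,\gamma_t\}$ directly off the monodromy action on the rank-two $\ol\Q$-vector space $\bL:=H_1(X_t,\ol\Q)^\chi$, using the irreducibility proved in Lemma \ref{Ev-cor}. First I would record the basic structure: by \eqref{lem4} the line $\ker N=\Image(N)$ is one-dimensional (over $\ol\Q$ on the $\chi$-part), so $T_1=\exp(N)$ has a single Jordan block and fixes $\ker N$ pointwise. Although Lemma \ref{Ev-cor} is phrased for $\pi_1(S^{(l)},t)$, the rank-two local system on $S$ underlying $\cH^\chi$ pulls back along the finite covering $S^{(l)}\to S$ to the one shown irreducible there, so $\bL\ot\C$ is also irreducible over $\C[\pi_1(S,t)]$. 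Consequently $\ker N$, being $T_1$-stable, cannot also be $T_0$-stable (otherwise it would be a $\pi_1(S,t)$-subrepresentation), and in particular $T_0$ is not a scalar.

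Next I would construct the basis. Choose $0\ne\delta_t\in\ker N\cap\bL$, a line defined over $\ol\Q$ since $N$ is defined over $\Q$. Since $T_0$ is not scalar it has an eigenvector, whose eigenvalue is a root of unity and hence lies in $\ol\Q$; call this eigenvector $v_1\in\bL$, its eigenvalue $e^{2\pi i\a_1}$, and let $e^{2\pi i\a_2}$ be the remaining eigenvalue of $T_0$, so that $\mathrm{tr}(T_0)=e^{2\pi i\a_1}+e^{2\pi i\a_2}$. Because $\ker N$ is not $T_0$-stable, $v_1\notin\ker N$, so $Nv_1$ is a nonzero element of $\Image(N)=\ker N$, say $Nv_1=\mu\delta_t$ with $\mu\in\ol\Q^\times$; set $\gamma_t:=\mu^{-1}v_1$. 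Then $\{\delta_t,\gamma_t\}$ is an $\ol\Q$-basis of $\bL$, and $T_1\delta_t=\delta_t$, $T_1\gamma_t=\gamma_t+N\gamma_t=\gamma_t+\delta_t$, which is \eqref{T_1}. For \eqref{T_0}, the relation $T_0\gamma_t=e^{2\pi i\a_1}\gamma_t$ gives the second column, and writing $T_0\delta_t=p\,\delta_t+\varepsilon\,\gamma_t$ the identity $\mathrm{tr}(T_0)=p+e^{2\pi i\a_1}$ forces $p=e^{2\pi i\a_2}$, while $\varepsilon\ne0$ since otherwise $\ker N=\ol\Q\delta_t$ would be $T_0$-stable, contradicting the previous paragraph.

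For the arithmetic assertions I would argue with the determinant of $\bL$ and one short $2\times2$ computation. Taking generators $\gamma_0,\gamma_1,\gamma_\infty$ of $\pi_1(S,t)$ with $\gamma_0\gamma_1\gamma_\infty=1$, one has $\det T_0=e^{2\pi i(\a_1+\a_2)}$, $\det T_1=1$ by maximal unipotency, and $\det T_\infty=e^{2\pi i(\b_1+\b_2)}$, while $\det(T_0T_1T_\infty)=1$; this gives $\a_1+\a_2+\b_1+\b_2\in\Z$. Suppose now, for contradiction, that $\a_i+\b_j\in\Z$ for some $i,j$; combined with the identity just proved this forces $\{e^{2\pi i\b_1},e^{2\pi i\b_2}\}=\{e^{-2\pi i\a_1},e^{-2\pi i\a_2}\}$, hence $\mathrm{tr}(T_\infty)=e^{-2\pi i\a_1}+e^{-2\pi i\a_2}=\mathrm{tr}(T_0)/\det T_0$. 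Conjugating so that $T_1=\bigl(\begin{smallmatrix}1&1\\0&1\end{smallmatrix}\bigr)$ and writing $T_0=\bigl(\begin{smallmatrix}a&b\\c&d\end{smallmatrix}\bigr)$, the product $T_1T_0$ has trace $a+c+d$ and determinant $\det T_0$, so $\mathrm{tr}(T_\infty)=\mathrm{tr}\bigl((T_1T_0)^{-1}\bigr)=(a+c+d)/\det T_0$. Equating the two expressions gives $c=0$, so the line $\ol\Q\cdot(1,0)^{\mathrm t}$ is fixed by both $T_0$ and $T_1$, hence by all of $\pi_1(S,t)$, contradicting irreducibility of $\bL\ot\C$.

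I expect the non-resonance $\a_i+\b_j\notin\Z$ to be the only real point: it is exactly the statement that the rigid hypergeometric rank-two local system with the prescribed local exponents is irreducible, and the explicit matrix computation above is the quickest way to see it without invoking rigidity. The construction of the basis and the relation $\a_1+\a_2+\b_1+\b_2\in\Z$ are then routine; the one thing to watch throughout is that $v_1$, $\mu$ and $\varepsilon$ are automatically defined over $\ol\Q$ because the eigenvalues of $T_0$ are roots of unity.
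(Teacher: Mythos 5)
Your proof is correct and follows essentially the same route as the paper's: the basis is the one adapted to the $T_1$-Jordan block and a $T_0$-eigenvector, the non-vanishing of $\varepsilon$ comes from the irreducibility of Lemma \ref{Ev-cor}, and the arithmetic assertions come from the determinant and trace of the relation $T_\infty T_1T_0=I$. The only cosmetic difference is that the paper reads $\alpha_i+\beta_j\notin\Z$ directly off the identity $\varepsilon=-e^{2\pi i\alpha_1}-e^{2\pi i\alpha_2}+e^{-2\pi i\beta_1}+e^{-2\pi i\beta_2}\ne 0$, whereas you rederive the same trace identity inside a proof by contradiction.
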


\begin{proof}
Since $T_1$ is a non-trivial unipotent monodromy
on $H_1(X_t,\ol{\Q})^\chi$ by \eqref{lem4},
there is a unique eigenvector $\delta_t$ such that $T_1(\delta_t)=\delta_t$.
Let $\gamma_t$ be any cycle which is linearly independent from $\delta_t$.
Then $T_1(\gamma_t)-\gamma_t=c\delta_t$
for some $c\in\ol{\Q}^\times$.
By replacing $\delta_t$ with $c^{-1}\delta_t$, we obtain \eqref{T_1}. 
Secondly, if $\delta_t$ is an eigenvector for $T_0$, then the subspace $\ol{\Q}\cdot\delta_t$
is stable under the action of $\pi_1(S,t)$. This contradicts Lemma \ref{Ev-cor}.
Therefore an eigenvector of $T_0$ must be $\gamma_t+c\delta_t$ for some $c\in\ol{\Q}^\times$.
Replacing $\gamma_t$ with $\gamma_t-c\delta_t$, we have \eqref{T_0} for some $\varepsilon \in \ol\Q$. 
Again by Lemma \ref{Ev-cor}, we have $\varepsilon \ne 0$. 
Hence the first assertion is proved. 
Since 
$\mathrm{Tr}(T_1T_0)=\mathrm{Tr}(T_\infty^{-1})=e^{-2\pi i\beta_1}+e^{-2\pi i\beta_2}$, 
we have
$$
-e^{2\pi i\alpha_1}-e^{2\pi i\alpha_2}+e^{-2\pi i\beta_1}+e^{-2\pi i\beta_2}=\varepsilon\ne 0.
$$
On the other hand, since $T_\infty T_1T_0=I$, we have
$$e^{2\pi i\alpha_1}e^{2\pi i\alpha_2}e^{2\pi i\beta_1}e^{2\pi i\beta_2}=1.$$ 
These imply the second assertion. 
\end{proof}

From now on, we assume that 
$\alpha_1+\alpha_2+\beta_1+\beta_2=1$. 
For $\delta_t$, $\gamma_t$ as above, we put
\begin{equation*}\label{key1}
f_1(t):=\int_{\delta_t}\omega,\quad
f_2(t):=\int_{\gamma_t}\omega, 
\end{equation*}
which are multi-valued analytic functions on $S$.

\begin{lem}[Key Lemma 1]\label{key-lem}
Put
$$
F_1(t):=t^{\alpha_1}{}_2F_1\left({\alpha_1+\beta_1,\alpha_1+\beta_2 \atop 1};1-t\right),\quad
F_2(t):=t^{\alpha_1}{}_2F_1\left({\alpha_1+\beta_1,\alpha_1+\beta_2 \atop 1+\alpha_1-\alpha_2};t\right). 
$$
Then, there is a differential operator $\theta = q_0 + q_1 \frac{d}{dt}$ with
$q_i(t)\in \ol{\Q}(t)$ and constants $\lambda_i\in\C$ such that
\[
f_1=\lambda_0 \theta F_1, \quad
f_2=\lambda_1 \theta F_1+\lambda_2 \theta F_2.
\]
Moreover, $\lambda_0\lambda_2\ne 0$. 
\end{lem}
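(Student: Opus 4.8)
The plan is to reduce the statement to the classical theory of second order Fuchsian equations with three singular points. First I would introduce the model equation $L$: by Riemann's rigidity theorem there is a unique second order Fuchsian differential equation on $\P^1$ whose only singular points are $0,1,\infty$ and whose Riemann scheme is
\[
\begin{pmatrix}0&1&\infty\\ \alpha_1&0&\beta_1\\ \alpha_2&0&\beta_2\end{pmatrix};
\]
this scheme satisfies Fuchs' relation precisely because of the normalization $\alpha_1+\alpha_2+\beta_1+\beta_2=1$. The substitution $y=t^{\alpha_1}g$ carries $L$ into Gauss' hypergeometric equation with parameters $(a,b,c)=(\alpha_1+\beta_1,\,\alpha_1+\beta_2,\,\alpha_1-\alpha_2+1)$, for which one checks $c-a-b=0$, again using the normalization. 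Reading off the two standard local solutions of Gauss' equation — the holomorphic one at $t=1$ (where the exponents are $0,0$) and the holomorphic one at $t=0$ — shows at once that both $F_1$ and $F_2$ are solutions of $L$. They are $\C$-linearly independent: since $\alpha_1+\beta_j\notin\Z$ by Lemma \ref{lem-basis}, the hypergeometric series in $F_2$ is not a polynomial, so $F_2$ has a $\log(1-t)$ term at $t=1$ whereas $F_1$ is holomorphic there. As the local monodromy $T_1$ of $L$ at $t=1$ is a nontrivial unipotent, the space $(\ker L)^{T_1}$ of $T_1$-invariant (equivalently, single-valued at $t=1$) solutions is one dimensional, hence equals $\C F_1$.

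Next I would identify the $\chi$-part $(\cH^\chi,\nabla)$ with the rank two connection $\mathscr{V}$ attached to $L$. Both are regular singular of rank two on $S$ with the same local monodromies: eigenvalues $e^{2\pi i\alpha_j}$ at $t=0$, eigenvalues $e^{2\pi i\beta_j}$ at $t=\infty$, and a nontrivial unipotent at $t=1$. The monodromy of $\cH^\chi$ is irreducible by Lemma \ref{Ev-cor}, and that of $\mathscr{V}$ is irreducible because $\alpha_1+\beta_j\notin\Z$; an irreducible rank two representation of $\pi_1(S)$ with a noncentral unipotent at one puncture and prescribed eigenvalues at the other two is unique up to isomorphism (rigidity of the hypergeometric local system). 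By Deligne's Riemann--Hilbert correspondence for regular singular connections this yields an isomorphism $\cH^\chi\cong\mathscr{V}$, which can be taken over $\ol\Q(t)$ since both connections and the $\nabla$-equivariant Hom space between them are defined over $\ol\Q$. In particular $\cH^\chi$ admits a cyclic vector $\omega_0$ over $\ol\Q(t)$ whose Picard--Fuchs operator is exactly $L$, so $\{\omega_0,\nabla_{d/dt}\omega_0\}$ is an $\ol\Q(t)$-basis of $\cH^\chi$. (Alternatively one can bypass Riemann--Hilbert and compute the Picard--Fuchs equation of a convenient rational relative form directly, matching its exponents with Gauss' equation; this is presumably closer to the intended ``Fuchs equation'' argument, but the logical content is the same.)

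Since $\omega\in\vg(S,F^1\cH^\chi)$, I would write $\omega=q_0\,\omega_0+q_1\,\nabla_{d/dt}\omega_0$ with $q_0,q_1\in\ol\Q(t)$ and put $\theta:=q_0+q_1\frac{d}{dt}$. For $\gamma=\delta_t$ and for $\gamma=\gamma_t$, using $\frac{d}{dt}\int_\gamma\omega_0=\int_\gamma\nabla_{d/dt}\omega_0$, one obtains
\[
\int_\gamma\omega=q_0\int_\gamma\omega_0+q_1\frac{d}{dt}\int_\gamma\omega_0=\theta\!\left(\int_\gamma\omega_0\right).
\]
Now $\int_{\delta_t}\omega_0$ is a solution of $L$ that is $T_1$-invariant (because $T_1\delta_t=\delta_t$), hence lies in $\C F_1$ by the first paragraph: $\int_{\delta_t}\omega_0=\lambda_0F_1$. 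Similarly $\int_{\gamma_t}\omega_0=\lambda_1F_1+\lambda_2F_2$ for some $\lambda_1,\lambda_2\in\C$. Applying $\theta$ gives $f_1=\lambda_0\theta F_1$ and $f_2=\lambda_1\theta F_1+\lambda_2\theta F_2$, as claimed. Finally $\lambda_0\lambda_2\ne0$ follows from nondegeneracy of the period pairing together with the cyclicity of $\omega_0$: if $\lambda_0=0$, then $\delta_t$ would be orthogonal to $\omega_0$ identically, hence — differentiating — to $\nabla_{d/dt}\omega_0$, and thus to all of $\cH^\chi$, forcing $\delta_t=0$; and if $\lambda_2=0$, the nonzero horizontal section $\lambda_0\gamma_t-\lambda_1\delta_t$ would be orthogonal to $\omega_0$, giving the same contradiction.

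I expect the second step to be the main obstacle: one must control $(\cH^\chi,\nabla)$ precisely enough — via rigidity of its monodromy and Deligne's correspondence — to obtain a cyclic vector whose Picard--Fuchs equation is literally the model equation $L$, rather than an integer twist of it carrying spurious apparent singular points; equivalently, one must ensure that the first order operator linking $\omega$ to $\omega_0$ is regular at $t=1$, so that the solutions with good behavior there are matched correctly. Everything else is the standard bookkeeping of local exponents of the hypergeometric equation.
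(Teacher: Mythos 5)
Your proposal is correct and follows essentially the same route as the paper: match the Riemann scheme of the Picard--Fuchs equation of $(\cH^\chi,\nabla)$ with that of the Gauss hypergeometric equation, invoke rigidity/Riemann--Hilbert to produce the first-order operator $\theta$ over $\ol\Q(t)$, characterize $f_1$ and $F_1$ as the $T_1$-eigenfunctions, and deduce $\lambda_0\lambda_2\ne0$ from linear independence (your period-pairing argument is just a slightly more explicit version of the paper's appeal to the independence of $f_1,f_2$). The paper's proof is simply a terser version of yours, and the concern you raise at the end about apparent singularities is immaterial since the lemma only requires $q_i\in\ol\Q(t)$.
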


\begin{proof}
As is well-known, $f_i(t)$ are linearly independent solutions of the Fuchs equation
(=ordinary differential equation with regular singularities) arising from 
$(\mathscr{H}^\chi, \nabla)$.
Therefore it is completely determined by the monodromy of $f_1$ and $f_2$. 
Then as is well-known, its monodromy is isomorphic to that of
$H_1(X_t)^\chi$.
By the above lemma, it is expressed by the Riemann scheme
$$\left\{
\begin{matrix}
t=0&t=1&t=\infty\\
\alpha_1&0&\beta_1\\
\alpha_2&0&\beta_2
\end{matrix}
\right\}.$$
This coincides with that of the Gauss hypergeometric equation whose solutions are $F_1$ and $F_2$. 
The fundamental theorem of Fuchs equations 
(Riemann-Hilbert correspondence) yields the existence of a differential operator
$\theta=q_0(t)+q_1(t)d/dt$ ($\exists q_i(t)\in \C(t)$)
such that
\[
\langle f_1,f_2\rangle_\C
=\langle\theta F_1,\theta F_2\rangle_\C.
\]
Here $\theta$ gives an equivalence of the Fuchs equation of $f_i$ and that of $F_i$. 
Since both equations are defined over $\ol{\Q}$, $q_i(t)$ are defined over $\ol{\Q}$.
Finally, $f_1$ is characterized as an eigenfunction for $T_1$ and so is $F_1$.
Therefore $\langle f_1\rangle_\C=\langle \theta F_1\rangle_\C$. 
Since $\{f_1,f_2\}$ are linearly independent, $\lambda_0\lambda_2\ne0$ follows.
\end{proof}

\subsection{Key Lemma 2}
\begin{lem}[Key Lemma 2]\label{key2}
Let the notation be as in Lemma \ref{key-lem} (Key Lemma 1).
Then $\lambda_0\in 2\pi i\ol{\Q}^\times$.
\end{lem}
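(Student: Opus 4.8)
I would pin down the constant $\lambda_0$ by evaluating both sides of the identity $f_1 = \lambda_0\,\theta F_1$ from Key Lemma 1 at a point where I have independent control of the period integral $f_1(t)=\int_{\delta_t}\omega$. The natural choice is the degenerate point $t=1$, where $\delta_t$ is (a multiple of) the vanishing cycle for the maximally unipotent monodromy $T_1$. The key input is the theory of limiting mixed Hodge structures from \S\ref{settei-sect}: since $H^1_{\mathrm{lim}}(X_t,\Q)$ is a mixed Tate Hodge structure with graded pieces $\Q^{\oplus g}$ in weight $0$ and $\Q(-1)^{\oplus g}$ in weight $2$, the period of $\omega$ against the weight-$0$ part $\ker(N)=\Image(N)$ — to which $\delta_t$ belongs — is computed by the nilpotent orbit / canonical extension, and the relevant period pairing lands in $2\pi i\,\ol{\Q}^\times$ (the vanishing cycle pairs with $F^1$ through the residue, contributing a factor $2\pi i$ and an algebraic number). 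Concretely, $f_1(t)$ extends to a single-valued holomorphic function near $t=1$ (because $T_1\delta_t=\delta_t$), and its value $f_1(1)$ is, up to $\ol{\Q}^\times$, equal to $2\pi i$ times a residue of the algebraic form $\omega$, hence lies in $2\pi i\,\ol{\Q}^\times$.

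**Steps, in order.** First, I would record that $f_1(t)$ is holomorphic and non-vanishing at $t=1$: holomorphic because $\delta_t$ is monodromy-invariant under $T_1$ and $\omega$ has at worst logarithmic singularities there, and non-vanishing because $\delta_t$ spans $\mathrm{Ev}^\chi$ while $F^1_\infty H^1_{\mathrm{lim}}(X_t,\C)^\chi$ is one-dimensional and pairs non-degenerately with $\mathrm{Gr}^W_0$ (using \eqref{hodgefilt-lem-eq} and the polarization). Second, I would identify $f_1(1)$ with a period of the weight-$0$ limiting Hodge structure: by Schmid's theorem this period is $2\pi i$ times the pairing of a $\ol{\Q}$-rational de Rham class with a $\ol{\Q}$-rational Betti class, hence in $2\pi i\,\ol{\Q}^\times$; more elementarily, one can realize it as $\mathrm{Res}_{t=1}$ of a meromorphic $1$-form attached to $\omega$ on the special fiber, which is manifestly in $2\pi i\,\ol{\Q}^\times$ by the residue theorem over $\ol{\Q}$. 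Third, on the hypergeometric side, $F_1(t)=t^{\alpha_1}{}_2F_1\!\left({\alpha_1+\beta_1,\alpha_1+\beta_2\atop 1};1-t\right)$ satisfies $F_1(1)=1$, and the differential operator $\theta=q_0+q_1\frac{d}{dt}$ has coefficients in $\ol{\Q}(t)$; provided $q_0,q_1$ are regular at $t=1$ (which one checks from the local exponents — both solutions have exponent $0$ at $t=1$, so the gauge transformation $\theta$ introduces no pole there), $(\theta F_1)(1)\in\ol{\Q}$. Comparing, $\lambda_0 = f_1(1)/(\theta F_1)(1)\in 2\pi i\,\ol{\Q}^\times$, where non-vanishing of the denominator follows from $\lambda_0\neq 0$ (Key Lemma 1) together with $f_1(1)\neq 0$.

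**Main obstacle.** The delicate point is making rigorous the claim that $f_1(1)\in 2\pi i\,\ol{\Q}^\times$, i.e. controlling the \emph{transcendence} of the limiting period exactly. One must ensure the comparison isomorphism between the $\ol{\Q}$-de Rham structure (carried by the algebraic form $\omega$ and the canonical extension of $\cH^\chi$) and the $\Q$-Betti structure (carried by $\delta_t$) contributes precisely one factor of $2\pi i$ and nothing more transcendental — this is exactly the statement that $H^1_{\mathrm{lim}}$ is mixed Tate with the explicit weights in \eqref{lem4-0}, so that its periods are powers of $2\pi i$ up to $\ol{\Q}^\times$. A clean way to package this is: the weight-$0$ piece $\mathrm{Gr}^W_0 H^1_{\mathrm{lim}}(X_t,\Q)^\chi\cong\ker(N)^\chi$ is a trivial Hodge structure $\Q$ but it sits in $H^1$ via the monodromy weight filtration, and the period of $\omega$ against it is computed through $N$, i.e. through the residue map $\mathrm{Res}_{t=1}\colon H^1(X_t)\to H^0(\text{special fiber})(-1)$ — the target's $(-1)$-twist is the source of the $2\pi i$. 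The secondary technical check — that $\theta$ has no pole at $t=1$ — is routine from the Riemann scheme but must be stated, since a pole of $q_i$ at $t=1$ would a priori spoil the conclusion; it does not occur because $F_1$ is precisely the solution with the trivial local exponent at $t=1$ matched to $f_1$.
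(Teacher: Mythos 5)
Your overall strategy is the same as the paper's: exploit the total degeneration at $t=1$ and the mixed Tate nature of the limit to show that the period of $\omega$ over the vanishing cycle is $2\pi i$ times something algebraic, then divide by the (algebraic) hypergeometric side. But two steps in your implementation have genuine gaps.

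First, you evaluate the identity $f_1=\lambda_0\theta F_1$ at the single point $t=1$, assuming that $f_1$ and $\theta F_1$ are holomorphic and non-vanishing there. Neither assumption is justified: $\omega$ is only a section of $F^1\cH^\chi$ over $S=\P^1\setminus\{0,1,\infty\}$, so it (and hence $f_1$) may have a pole of any order at $t=1$; correspondingly $\theta=q_0+q_1\frac{d}{dt}$ has coefficients in $\ol{\Q}(t)$ with no control on their poles at $t=1$ (your ``routine check from the Riemann scheme'' fails exactly because $\theta$ must reproduce whatever pole $f_1$ has, $F_1$ being regular at $t=1$). Moreover $f_1(1)$ can vanish (replace $\omega$ by $(t-1)\omega$), in which case comparing values at $t=1$ yields $0=0$ and no information on $\lambda_0$. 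The paper avoids all of this by proving the stronger statement that the \emph{entire Laurent expansion} of $f_1$ at $t=1$ lies in $2\pi i\,\ol{\Q}((t-1))$, and comparing it with $\theta F_1\in\ol{\Q}((t-1))$ coefficient by coefficient.

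Second, your key transcendence input --- that the limiting period against $\ker(N)$ lies in $2\pi i\,\ol{\Q}^\times$ because $H^1_{\mathrm{lim}}$ is mixed Tate --- is asserted rather than proved. Schmid's theory is transcendental and does not by itself tell you that the $\ol{\Q}$-de Rham structure coming from the canonical extension pairs with the Betti $\Q$-structure in $2\pi i\,\ol{\Q}$; that $\ol{\Q}$-rationality is precisely the content of the lemma. The paper supplies it concretely: the Albanese $J_t$ degenerates totally at $t=1$, so it admits an algebraic uniformization $u:(\G_{m})^g\to J_t$ over $\ol{\Q}[\varepsilon]/(\varepsilon^r)$, $\varepsilon=t-1$; writing $u^*\omega=\sum_j h_j(t)\,du_j/u_j$ with $h_j\in\ol{\Q}((t-1))$ and noting $\frac{1}{2\pi i}\int_{\delta_{i,t}}du_j/u_j\in\Q$ gives $f_1\in 2\pi i\,\ol{\Q}((t-1))$ directly. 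You would need to either reproduce this uniformization argument or find another genuinely algebraic description of the limit de Rham structure; without it, the central claim $f_1(1)\in 2\pi i\,\ol{\Q}^\times$ is unsupported.
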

\begin{proof}
Let $\delta_{i,t}\in H_1(X_t,\Q)\cap\Ker(N)$ $(1\leq i\leq g$)
be a basis. 
Then $f_1(t)$ is a linear combination of
\[
g_i(t)=\int_{\delta_{i,t}}\omega
\]
over $\ol{\Q}$.
Let $J_t$ be the Albanese variety of $X_t$.
Since $J_t$ degenerates totally at $t=1$, 
there is an algebraic uniformization
\[
u:(\G_{m,r})^g=\Spec\ol{\Q}[\varepsilon]/(\varepsilon^r)[u_i,u_i^{-1}]_{1\leq i\leq g}
\lra J_t,\quad 
\varepsilon:=t-1
\]
for $r\geq 1$.
Thinking $\omega$ of a 1-form on $J_t$, let
\[
u^*(\omega)=\sum_{i=1}^g h_i(t)\frac{du_i}{u_i},\quad
h_i(t)\in \ol{\Q}((t-1)).
\]
Then
\[
g_i(t)=\sum_{j=1}^g h_j(t)\int_{\delta_{i,t}}\frac{du_j}{u_j}=
2\pi i\sum_{j=1}^g r_{ij}h_j(t),\quad 
r_{ij}:=\frac{1}{2\pi i}\int_{\delta_{i,t}}\frac{du_j}{u_j}\in\Q.
\]
Therefore we have
\[
\lambda_0\theta F_1(t)=f_1(t)=\sum_{i=1}^g c_i g_i(t)\in 2\pi i\ol{\Q}((t-1)),\quad \exists
c_i\in\ol{\Q}.
\]
Since $\theta$ is a differential operator with coefficients in $\ol{\Q}$ and
$F_1$ has a Taylor expansion at $t=1$ with coefficients in $\Q$,
the assertion follows.
\end{proof}
\subsection{Key Lemma 3}
\begin{lem}[Key Lemma 3]\label{key3}
Let the notation be as in Lemma \ref{key-lem} (Key Lemma 1).
Then 
\[
\lambda_2=-\frac{\lambda_0}{2\pi i}B(\alpha_1+\beta_1,\alpha_1+\beta_2), \quad 
\lambda_1\in \ol{\Q}+2\pi i\ol{\Q}+\sum_{a\in\ol{\Q}^\times}\ol{\Q}\log(a).
\]
\end{lem}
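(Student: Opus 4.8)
The strategy is to pin down the two constants $\lambda_1,\lambda_2$ by comparing the two expressions for $f_2$ from Key Lemma 1 with explicit connection formulas for the Gauss hypergeometric functions. Recall $F_1$ is the solution at $t=1$ that is holomorphic (eigenfunction of $T_1$), while $F_2$ is a solution at $t=0$; to compare them we must express $F_2$ in terms of the basis of solutions adapted to $t=1$. Concretely, $t^{\alpha_1}{}_2F_1\!\left({\alpha_1+\beta_1,\alpha_1+\beta_2\atop 1+\alpha_1-\alpha_2};t\right)$ is one of the two standard solutions at $t=0$; using the classical Kummer connection formula (the one relating solutions at $0$ to solutions at $1$), one writes it as a $\ol{\Q}$-linear combination of $F_1(t)$ and a second, logarithmic solution at $t=1$, say $\widetilde F_1(t)$, with connection coefficients that are explicit ratios of $\Gamma$-values — here is where $B(\alpha_1+\beta_1,\alpha_1+\beta_2)=\Gamma\!\left({\alpha_1+\beta_1,\alpha_1+\beta_2\atop 2\alpha_1+\beta_1+\beta_2}\right)$ enters, using $\alpha_1+\alpha_2+\beta_1+\beta_2=1$ so that $1-(\alpha_1+\beta_1)-(\alpha_1+\beta_2)+1 = \alpha_2-\alpha_1+1$ matches the lower parameter $1+\alpha_1-\alpha_2$ appropriately.

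**Key steps.** First I would fix, once and for all, the second solution $\widetilde F_1$ at $t=1$; since the local exponents there are $0,0$ it has the shape $\widetilde F_1(t)=F_1(t)\log(1-t)+(\text{holomorphic, defined over }\ol{\Q})$. Next, apply $\theta$ to the Kummer connection identity
\[
F_2(t) = A\,\widetilde F_1(t) + B\,F_1(t),\qquad A,B\in\ol{\Q}\cdot\Gamma(\text{stuff}),
\]
so that $\theta F_2 = A\,\theta\widetilde F_1 + B\,\theta F_1$. Then I compare with $f_2 = \lambda_1\theta F_1 + \lambda_2\theta F_2 = (\lambda_1+\lambda_2 B)\theta F_1 + \lambda_2 A\,\theta\widetilde F_1$. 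On the other hand, $f_2=\int_{\gamma_t}\omega$ has a known monodromy behaviour around $t=1$: by \eqref{T_1}, $T_1(f_2)=f_2+f_1$, i.e. the multivaluedness of $f_2$ at $t=1$ is exactly $\frac{1}{2\pi i}\,f_1\cdot\log(1-t)\cdot 2\pi i$-type; matching the coefficient of the logarithm forces $\lambda_2 A\cdot(\text{normalization of }\theta\widetilde F_1)=\frac{\lambda_0}{2\pi i}$ up to the explicit Kummer constant, which yields $\lambda_2 = -\dfrac{\lambda_0}{2\pi i}B(\alpha_1+\beta_1,\alpha_1+\beta_2)$. Finally, $\lambda_1$ is read off from the single-valued (holomorphic at $t=1$) part: $\lambda_1 = (\text{coeff of }\theta F_1\text{ in }f_2) - \lambda_2 B$, where the first term lies in $2\pi i\ol{\Q}$ by the same Albanese-uniformization argument as in Key Lemma 2 applied to $\gamma_t$ (its vanishing-cycle component contributes $2\pi i\ol{\Q}$), the $B$-factor is a $\Gamma$-ratio lying in $\ol{\Q}\cdot$(period of a CM motive) — but actually after dividing through one sees $\lambda_2 B\in \lambda_0\cdot\frac{1}{2\pi i}\ol{\Q}\cdot B(\cdots)$, and combining with $\lambda_0\in 2\pi i\ol{\Q}^\times$ from Key Lemma 2 the product is algebraic up to a $\log$; more carefully, the holomorphic-at-$t=1$ expansion of $f_2$ has coefficients in $2\pi i\ol{\Q}+\sum_a\ol{\Q}\log a$ because $\gamma_t$ pairs with $du_j/u_j$ to give periods $2\pi i r_{ij}$ plus the "constant of integration" terms $\log u_j$ evaluated at the base point, which are logarithms of $\ol{\Q}$-points — hence $\lambda_1\in\ol{\Q}+2\pi i\ol{\Q}+\sum_{a\in\ol{\Q}^\times}\ol{\Q}\log(a)$.

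**Main obstacle.** The delicate point is bookkeeping the normalizations so that the Kummer connection coefficient comes out exactly as the beta function $B(\alpha_1+\beta_1,\alpha_1+\beta_2)$ and not some other $\Gamma$-ratio: this requires using $\alpha_1+\alpha_2+\beta_1+\beta_2=1$ (equivalently $\alpha_i+\beta_j\notin\Z$, from Lemma \ref{lem-basis}) at exactly the right moment, and being careful that $\theta$ — being a first-order operator with $\ol{\Q}(t)$-coefficients that intertwines the two Fuchs equations — does not disturb the leading $\log(1-t)$ coefficient (it doesn't, since $\theta$ is defined over $\ol{\Q}$ and the logarithmic monodromy is an intrinsic invariant of the local system). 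The second subtlety is justifying that the holomorphic part of $\int_{\gamma_t}\omega$ contributes only $2\pi i\ol{\Q}+\sum\ol{\Q}\log a$: one must choose the uniformization $u$ and the path from the base point carefully so that the "constants of integration" are genuinely logarithms of algebraic numbers, which follows because $X_t$, the degeneration, and $\omega$ are all defined over $\ol{\Q}$, so the relevant values of $u_i$ are $\ol{\Q}$-rational.
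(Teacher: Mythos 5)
Your treatment of $\lambda_2$ is essentially the paper's argument: since $\alpha_1+\alpha_2+\beta_1+\beta_2=1$ forces $1+\alpha_1-\alpha_2=(\alpha_1+\beta_1)+(\alpha_1+\beta_2)$, the function ${}_2F_1\bigl({a,b\atop a+b};t\bigr)$ with $a=\alpha_1+\beta_1$, $b=\alpha_1+\beta_2$ is in the \emph{logarithmic} case at $t=1$; its $\log(1-t)$-coefficient is $-B(a,b)^{-1}F_1$ to leading order, and matching this against the monodromy relation $Nf_2=f_1$ gives $\lambda_2=-\frac{\lambda_0}{2\pi i}B(a,b)$. The paper quotes the explicit expansion from Bateman rather than a ``Kummer connection formula,'' but this is the same computation, and your worry about $\theta$ disturbing the log-coefficient is correctly dismissed.

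The $\lambda_1$ half has genuine gaps. First, precisely because the connection problem is logarithmic, the non-logarithmic connection coefficient $B$ in $F_2=A\,\widetilde F_1+B\,F_1$ is \emph{not} a $\Gamma$-ratio: with your normalization $\widetilde F_1=F_1\log(1-t)+(\text{holomorphic over }\ol{\Q})$ one finds $B\in \ol{\Q}\cdot B(a,b)^{-1}k_0$ with $k_0=2\psi(1)-\psi(a)-\psi(b)$. To place $\lambda_2B$ (equivalently the contribution of the $k_n$'s) inside $\ol{\Q}+2\pi i\ol{\Q}+\sum_{a}\ol{\Q}\log(a)$ one must invoke Gauss's theorem on the digamma function at rational arguments; this classical input is an essential step of the paper's proof and is absent from yours. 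Second, the claim that the $\theta F_1$-coefficient of $f_2$ lies in $2\pi i\ol{\Q}$ ``by the Key Lemma 2 argument applied to $\gamma_t$'' fails: that argument works for $\delta_t$ because a vanishing cycle is a closed loop in the torus $(\G_m)^g$, so $\int_{\delta_t}du_j/u_j\in 2\pi i\Q$, whereas $\gamma_t$ does not close up in the torus and its pairing with $du_j/u_j$ produces logarithms of the multiplicative periods ($q$-parameters) of the degenerating Albanese. Your corrected assertion --- that the finite part of these is in $2\pi i\ol{\Q}+\sum_a\ol{\Q}\log(a)$ --- is exactly the non-trivial point; the paper establishes it by combining Schmid's nilpotent orbit theorem (which also yields that $\wt{f}_2=f_2-\frac{1}{2\pi i}\log(1-t)f_1$ is meromorphic with $\ord_{t=1}(f_1)\le\ord_{t=1}(\wt{f}_2)$, the inequality resting on the fact that the limit Hodge filtration is not $\Ker(N)$ --- a step you do not address but need in order to ``read off'' the coefficient) with Hoffman's theorem that the extension data of $H^1_{\mathrm{lim}}$ of a stable degeneration are logarithms of algebraic numbers. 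Saying that ``the relevant values of $u_i$ are $\ol{\Q}$-rational'' does not substitute for this: what must be controlled are the periods of the limit mixed Hodge structure, not the coordinates of some base points, and that is precisely the content of the cited theorem.
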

Key lemma 3 is proven by looking at the asymptotic behavior of $f_2$ at $t=1$.
To do this, we first prepare the following lemma, which is proven by
the theory of limiting mixed Hodge structures due to Schmid \cite{Schmid}
and a theorem of Hoffman \cite{hoffman}.
\begin{lem}\label{schmid-lem}
Put
$$\wt{f}_2:=f_2-\frac{1}{2\pi i}\log(1-t)f_1. $$
Then we have
\begin{enumerate}
\item[(i)]
$f_1$ and $\wt{f}_2$ are meromorphic at $t=1$.
\item[(ii)]
$ \ord_{t=1}(f_1)\leq \ord_{t=1}(\wt{f}_2)$.
\item[(iii)]
$\lim_{t\to 1}2\pi i\wt{f}_2(t)/f_1(t)\in
2\pi i\ol{\Q}+\sum_{a\in\ol{\Q}^\times}\ol{\Q}\log(a)$.
\end{enumerate}
\end{lem}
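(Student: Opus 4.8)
The plan is to get (i) and (ii) essentially for free from the local monodromy together with the structure of the limiting mixed Hodge structure at $t=1$, and then to reduce (iii) to the determination of a single period of that limiting structure, which is the step where a theorem of Hoffman is needed. For (i), recall from Lemma~\ref{lem-basis} that $T_1\delta_t=\delta_t$ and $T_1\gamma_t=\delta_t+\gamma_t$, so that analytic continuation once around $t=1$ sends $f_1\mapsto f_1$ and $f_2\mapsto f_1+f_2$ while $\log(1-t)\mapsto\log(1-t)+2\pi i$; hence $f_1$ and $\wt{f}_2=f_2-\tfrac1{2\pi i}\log(1-t)f_1$ are single-valued near $t=1$. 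On the other hand $(\cH^\chi,\nabla)$ has regular singularities, so its horizontal multivalued sections, written in a frame of the Deligne canonical extension at $t=1$, grow at most polynomially in $|t-1|^{-1}$; pairing them with the algebraic form $\omega$ (a meromorphic section of the canonical extension of $\cH^\chi$) shows that $f_1,f_2$, hence $\wt{f}_2$, have moderate growth. A single-valued holomorphic function of moderate growth on a punctured disc is meromorphic, which is (i). (Alternatively one invokes Schmid's norm estimates for $(\cH^\chi,\nabla)$.)

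For the rest, set $V:=H^1_{\mathrm{lim}}(X_t,\C)^\chi$. By \eqref{lem4}, \eqref{lem4-0} and Lemma~\ref{hodgefilt-lem}, $V$ is mixed Tate with $\mathrm{Gr}^W_0V$ and $\mathrm{Gr}^W_2V$ one-dimensional, $W_0V=\Ker N$, and $F^1_\infty V$ one-dimensional and complementary to $\Ker N$; the Hodge bundle extends to a subbundle of the canonical extension with fibre $F^1_\infty V$ at $t=1$ (Schmid). Dually, the $T_1$-invariant flat section $\delta_t$ already lies in the canonical extension, with limit $\delta_\infty$ spanning $\mathrm{Gr}^W_{-2}$ of $H_{1,\mathrm{lim}}(X_t,\C)^\chi$, and $\wt\gamma_t:=\gamma_t-\tfrac1{2\pi i}\log(1-t)\,\delta_t=\exp\!\big(-\tfrac{\log(1-t)}{2\pi i}N\big)\gamma_t$ is the canonical-extension frame element attached to $\gamma_t$, with finite limit $\wt\gamma_\infty$ projecting to a generator of $\mathrm{Gr}^W_0$. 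Replacing $\omega$ by $(t-1)^{-\ord_{t=1}\omega}\,\omega$ --- harmless for all of (i)--(iii) and still defined over $\ol\Q$ --- we may assume $\omega$ has a nonzero limit $\omega_\infty\in F^1_\infty V$, whose image in $\mathrm{Gr}^W_2V$ is nonzero. Since the period pairing restricts to a perfect pairing between $\mathrm{Gr}^W_{-2}H_{1,\mathrm{lim}}(X_t,\C)^\chi$ and $\mathrm{Gr}^W_2V$, we get $f_1(1)=\langle\delta_\infty,\omega_\infty\rangle\neq0$, so $\ord_{t=1}f_1=0$; and $\wt{f}_2(t)\to\langle\wt\gamma_\infty,\omega_\infty\rangle$ is finite, so $\ord_{t=1}\wt{f}_2\geq0=\ord_{t=1}f_1$. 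This proves (ii).

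Finally (iii). If $\langle\wt\gamma_\infty,\omega_\infty\rangle=0$ the limit is $0\in2\pi i\ol\Q$. Otherwise $\lim_{t\to1}2\pi i\,\wt{f}_2/f_1=2\pi i\,\langle\wt\gamma_\infty,\omega_\infty\rangle/\langle\delta_\infty,\omega_\infty\rangle$, which by the previous paragraph is $2\pi i$ times a ratio of periods of the limiting mixed Tate Hodge structure $V$, i.e. (up to an algebraic multiple of $2\pi i$) the period of the extension $0\to\ol\Q(0)\to V\to\ol\Q(-1)\to0$. Because the Albanese $J_t$ degenerates totally at $t=1$ over $\ol\Q$, Mumford--Tate uniformization identifies this extension class with $\log q$ for an algebraic Tate parameter $q\in\ol\Q^\times$ --- this is where a theorem of Hoffman enters --- so the limit lies in $2\pi i\ol\Q+\sum_{a\in\ol\Q^\times}\ol\Q\log a$. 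The main obstacle is precisely this last step: controlling the \emph{arithmetic shape} of the limiting period, not merely its finiteness. One may also verify it concretely using Key Lemma~\ref{key-lem}: near $t=1$ the function $\theta F_2$ is governed by the Gauss connection formula for ${}_2F_1(\alpha_1+\beta_1,\alpha_1+\beta_2;1+\alpha_1-\alpha_2;t)$ at $t=1$, which falls in the logarithmic case since $1-\alpha_1-\alpha_2-\beta_1-\beta_2=0$, and whose non-logarithmic constant term is $2\psi(1)-\psi(\alpha_1+\beta_1)-\psi(\alpha_1+\beta_2)$; Gauss's digamma theorem places this in $2\pi i\ol\Q+\sum_{a\in\ol\Q^\times}\ol\Q\log a$, the Euler constants cancelling.
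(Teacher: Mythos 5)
Your proof is correct and follows essentially the same route as the paper: (i) and (ii) rest on Schmid's theory of the limit mixed Hodge structure at $t=1$ together with the fact that $F^1_\infty$ is complementary to $\Ker(N)$ (i.e.\ \eqref{hodgefilt-lem-eq}), and (iii) reduces to Hoffman's theorem on the extension data of the limiting mixed Tate structure of the totally degenerate Albanese. The differences are only cosmetic: you phrase everything via the canonical extension of the homology bundle (the twisted cycles $\delta_t$, $\wt{\gamma}_t$) and prove (ii) directly from the nondegenerate pairing on the weight-graded pieces, whereas the paper works in the dual cohomology frame, gets meromorphy of $f_1$ from the explicit formula $f_1=\lambda_0\theta F_1$, and proves (ii) by contradiction.
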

\begin{proof}
Let $\delta^*_t,\gamma^*_t\in H^1(X_t,\C)^\chi$ be the dual basis.
Then
\[
\omega=\left(\int_{\delta_t}\omega\right)\delta_t^*
+\left(\int_{\gamma_t}\omega\right)\gamma_t^*
=f_1(t)\delta_t^*+f_2(t)\gamma_t^*\in H^1(X_t,\C)^\chi.
\]
Note that $N(\delta^*_t)=\gamma_t^*$ and $N(\gamma_t^*)=0$.
The nilpotent orbit theorem of Schmid \cite{Schmid} yields that the subspace spanned by
\[
\exp\left(\frac{1}{2\pi i}\log(1-t)N\right)\omega=f_1(t)\delta_t^*+\wt{f}_2(t)\gamma_t^*
\in
H^1_{\mathrm{lim}}(X_t,\C)^\chi
\]
converges in the flag manifold as $t\to 1$.
Since $f_1=\lambda_0\theta F_1$ is meromorphic at $t=1$, 
so is $\wt{f}_2(t)$. This proves (i).

Let $k_1:=\ord_{t=1}(f_1)$ and
$k_2:=\ord_{t=1}(\wt{f}_2)$.
Suppose $k_1>k_2$.
Then the limiting Hodge filtration $F^1_\infty$ is spanned by
\[
\left(\lim_{t\to 1}f_1(t)/\wt{f}_2(t)\right)\delta_t^*+\gamma_t^*
=\gamma_t^*\in
H^1_{\mathrm{lim}}(X_t,\C)^\chi.
\]
Namely $F_\infty^1=\Ker(N)$. This is impossible by \eqref{hodgefilt-lem-eq}.
Hence we have $k_1\leq k_2$, finishing the proof of (ii).

Finally we show (iii).
Since $k_1\leq k_2$, the limiting Hodge filtration is spanned by 
\[
\delta_t^*
+(\lim_{t\to 1}\wt{f}_2(t)/f_1(t))\gamma_t^*
\in
H^1_{\mathrm{lim}}(X_t,\C)^\chi.
\]
The main theorem of \cite{hoffman} yields that
the extension data of
\[
0\lra \Q(1)^{\op g}\lra H^1_{\mathrm{lim}}(X_t,\Q(1))\lra \Q^{\op g}\lra 0
\]
are $\log(\ol{\Q}^\times)$. Therefore
$\lim_{t\to 1}2\pi i\wt{f}_2(t)/f_1(t)$ is a linear combination of $\log(\ol{\Q}^\times)$
over $\ol{\Q}$, as desired.
\end{proof}

\begin{proof}[Proof of Key Lemma 3]
By Lemma \ref{lem-basis}, we have $Nf_2=f_1$.
By Lemma \ref{key-lem} (Key lemma 1),
$\lambda_2\theta(NF_2)=\lambda_0\theta F_1$
and hence
$\lambda_2NF_2=\lambda_0F_1$.
The asymptotic behaviour of Gauss\rq{} hypergeometric function is given 
as follows (see \cite{Bateman} 2.3.1, p.74):
\begin{equation}\label{key3-eq1}
{}_2F_1\left({a,b\atop a+b};t\right)
=B(a,b)^{-1}\sum_{n=0}^\infty\frac{(a)_n(b)_n}{n!^2}(k_n-\log(1-t))(1-t)^n
\end{equation}
where 
\[
k_n:=2\psi(n+1)-\psi(a+n)-\psi(b+n),\quad
\psi(z):=\Gamma'(z)/\Gamma(z).
\]
In particular, we have
\[
\lim_{t\to 1}NF_2=
-2\pi i/B(a,b)
\]
with $a=\a_1+\b_1$, $b=\a_1+\b_2$. 
Comparing with $F_1(1)=1$, we obtain the first assertion. 
Next, by \eqref{key3-eq1}, there are analytic functions $h_i$ at $t=1$ such that  
$$\theta F_2=B(a,b)^{-1}
(h_1+h_2\log(1-t)).$$
By Lemma \ref{schmid-lem} (i) and (ii), 
\begin{align*}
\frac{\wt f_2}{f_1}
&=\frac{\lambda_1}{\lambda_0} + \frac{\lambda_2}{\lambda_0} \frac{\theta F_2}{\theta F_1} - \frac{1}{2 \pi i} \log(1-t)
\\&=\frac{\lambda_1}{\lambda_0} - \frac{1}{2 \pi i} \left(\frac{h_1}{\theta F_1} +\left(\frac{h_2}{\theta F_1}+1\right) \log(1-t)\right)
\end{align*}
is holomorphic at $t=1$. Since $\theta F_1$ is meromorphic at $t=1$, we have $h_2/\theta F_1=-1$. 
Therefore, 
$$\lim_{t \to 1} \frac{\wt f_2}{f_1} = \frac{\lambda_1}{\lambda_0} -\frac{1}{2\pi i} \lim_{t \to 1} \frac{h_1}{\theta F_1}.$$
Since $\theta=q_0+q_1 d/dt$ with $q_i \in \ol\Q(t)$, $\theta F_1$ has Laurent coefficients in $\ol\Q$. 
By \eqref{key3-eq1}, the first Laurent coefficient of $h_1$ is in $k_0 \ol\Q+ k_1 \ol\Q$. 
By a theorem of Gauss (cf. \cite{Bateman} 1.7.3, p.18--19), 
$k_n$ is a linear combination of $\ol\Q$ and $\log(\ol{\Q}^\times)$.
Therefore, we obtain the second assertion from Lemma \ref{key2} and Lemma \ref{schmid-lem} (iii). 
\end{proof}

\section{Proof of the Period Formula}\label{main-pf-sect}
Let the notation be as in \S \ref{settei-sect}.
Put $D_\infty^{(l)}:=(f^{(l)})^{-1}(\infty)$ and
$\ol{U}^{(l)}:=X^{(l)}\setminus D_\infty^{(l)}=(f^{(l)})^{-1}(\P^1\setminus \{\infty\})$:
\[
\xymatrix{
\ol{U}^{(l)}
\ar[r]\ar[d]_{f^{(l)}}&X^{(l)}\ar[d]^{f^{(l)}}&D_\infty^{(l)}\ar[d]\ar[l]\\
\P^1\setminus\{\infty\}\ar[r]&\P^1&\{\infty\}\ar[l]
}
\]
Let $K[G^{(l)}]\to K_i$ be a projection and 
$e_i\in K[G^{(l)}]$ the corresponding idempotent
which satisfy
the assumption in Theorem \ref{main}.
We fix an embedding $\chi:K\hra\ol{\Q}$ and an integer $0<k<l$ prime to $l$ 
such that
$\varepsilon_k\ot \chi\in I_i$, i.e. the homomorphism $\varepsilon_k\ot \chi:
K[G^{(l)}]\to\ol{\Q}$
factors through $K_i$ (see \eqref{index} for the definition of $I_i$).
We then write
$\alpha_j=\alpha_j^\chi$ and
$\beta_j=\beta_j^\chi$ simply.
\subsection{}\label{coh-sect}
Put $H^2(X^{(l)})_0:=\Ker[H^2(X^{(l)})\to H^2(X^{(l)}_t)]$ where $X^{(l)}_t$
is the general fiber, and $H^2(U^{(l)})_0$ and $H^2(\ol{U}^{(l)})_0$ similarly.
Recall the commutative diagram \eqref{local1-com}. It induces 
\begin{equation}\label{local1-com-1}
\xymatrix{
0\ar[r]& H^{(l)}\ar[r]\ar[d]^\cong& M^{(l)}\ar[r]\ar[d]^\cong& E\ar[r]\ar@{=}[d]&0\\
0\ar[r]& H^2(X^{(l)})_0/\langle D^{(l)}\rangle\ar[r]& H^2(U^{(l)})_0\ar[r]
& E\ar[r]&0\\
0\ar[r]& H^2(X^{(l)})_0/\langle D^{(l)}\rangle\ar[r]\ar@{=}[u]& 
H^2(\ol{U}^{(l)})_0/\langle D^{(l)}\rangle\ar[r]\ar[u]_\cup
& E_\infty\ar[r]\ar[u]_\cup&0.
}
\end{equation}
Note that all terms are equipped with multiplication by $K[G^{(l)}]$.
Put
\[
H^2(\ol{U}^{(l)})_\fib:=\Ker\left[H^2(\ol{U}^{(l)})\to \prod_F H^2(F)\right]
\]
where $F$ runs over all fibral divisors in $\ol{U}^{(l)}$ (i.e. $f^{(l)}(F)$ is a point). 
Then we claim that
\begin{equation}\label{local1-com-2}
H^2(\ol{U}^{(l)})_\fib\lra
H^2(\ol{U}^{(l)})_0/\langle D^{(l)}\rangle
\end{equation}
is injective.
Indeed, the kernel of the composition
\[
\langle D^{(l)}\rangle\lra H^2(X^{(l)})\lra \prod_F H^2(F)
\]
is one-dimesional, generated by the cycle class of general fiber
(Zariski's lemma, \cite{barth} I, (2.10)), and 
it dies in $H^2(\ol{U}^{(l)})$ as $\ol{U}^{(l)}\ne X^{(l)}$. Since $e_iE_\infty=0$, we have
an injective map
\begin{equation}\label{local1-com-3}
e_iH^2(\ol{U}^{(l)})_\fib\hra
e_iH^2(X^{(l)})_0/\langle D^{(l)}\rangle\cong e_iH^{(l)}.
\end{equation}
\subsection{Rational 2-forms $\om_k$}
Recall from Lemma \ref{key-lem} the 1-form $\omega\in 
\vg(S,F^1\cH^\chi)\subset \vg(S,f_*\Omega^1_{U/S})$ 
and the differential operator $\theta=q_0(t)+q_1(t)d/dt$.
Let $\omega^{(l)}$ be the pull-back of $\omega$ by $t\to t^l$.
Take $q(t)\ne0\in\ol{\Q}[t]$ such that
$p_0(t):=q(t)q_0(t)$ and $p_1(t):=q(t)q_1(t)$ are polynomials
and 
$(t-1) \mid p_1(t)$.
We then consider the rational 2-form
\begin{equation}\label{pff1-1d}
\om_k=\om_{k,q(t)}:=t^{k-1}q(t^l)dt\wedge \omega^{(l)}
\end{equation}
Replacing $q(t)$ with $t^{a}(1-t)^bq(t)$ for some $a,b\gg0$ if necessary,  
we may assume that
\begin{equation*}\label{pff1-1}
\om_k\in \vg(\ol{U}^{(l)},\Omega^2_{X^{(l)}}).
\end{equation*}
This defines a de Rham
cohomology class $[t^j\om_k]\in H^2_\dR(\ol{U}^{(l)}/\ol{\Q})$ for any $j\geq 0$.
Obviously $[t^j\om_k]\in H^2_\dR(\ol{U}^{(l)}/\ol{\Q})_\fib$
and $\tau [t^j\om_k]=\zeta_l^{k+j}[t^j\om_k]$.
In particular 
\begin{equation}\label{local1-com-4}
[t^{lm}\om_k]\in [e_iH^2_\dR(\ol{U}^{(l)}/\ol{\Q})_\fib]^{\varepsilon_k\ot \chi}
\hra [e_iH^{(l)}_\dR]^{\varepsilon_k\ot \chi}\cong\ol\Q
\end{equation}
for any $m\geq0$ where the inclusion comes from \eqref{local1-com-3}.
\subsection{Homology cycle $\Delta$}\label{delta-sect}
Put
$D^{(l)}_0:=(f^{(l)})^{-1}(0)$.
Let $\delta_{t}\in H_1(X_t,\Q)^\chi$ be the vanishing cycle
in Lemma \ref{lem-basis}.  
By sweeping $\delta_t$
over the segment $0\leq t \leq 1$ we obtain
a Lefschetz thimble $\Delta\in H_2(\ol{U}^{(l)},D^{(l)}_0;\ol{\Q})$.
It may have a nonzero boundary $\partial\Delta\in H_1(D^{(l)}_0,\ol{\Q})$.
Let $\Delta_0^*\subset \P^1$ be the punctured neighborhood of $t=0$,
and put $\wt{\Delta}^*_0:=\pi^{-1}(\Delta^*_0)$.
By the local invariant cycle theorem, there is a canonical isomorphism
\[
H^1(D^{(l)}_0,\ol{\Q})\cong \vg(\wt{\Delta}^*_0,R^1f^{(l)}_*\Q)=\vg(\Delta^*_0,\cM^{(l)}).
\]
Since $k/l+\alpha^\chi_j\not\in\Z$ by the assumption in Theorem \ref{main},
this yields $H_1(D^{(l)}_0,\ol{\Q})^{\varepsilon_k\ot\chi}=0$.
By the exact sequence
\[
H_2(\ol{U}^{(l)},\ol{\Q})^{\varepsilon_k\ot\chi}\lra 
H_2(\ol{U}^{(l)},D^{(l)}_0;\ol{\Q})^{\varepsilon_k\ot\chi}\lra
H_1(D^{(l)}_0,\ol{\Q})^{\varepsilon_k\ot\chi}=0
\]
the component $\Delta^{\varepsilon_k\ot\chi}$ lifts up to a homology cycle
in $H_2(\ol{U}^{(l)},\ol{\Q})^{\varepsilon_k\ot\chi}$, which we write by the same notation
$\Delta^{\varepsilon_k\ot\chi}$.

\subsection{Computing the period of $H^{(l)}$}\label{computing-sect}
We shall compute the period of $[t^{lm}\om_k]$.
We first note that,
since $[t^{lm}\om_k]$ belongs to the ${\varepsilon_k\ot\chi}$-part,
\begin{equation}\label{pff1-lem0}
\int_{\Delta}t^{lm}\om_k=
\int_{\Delta^{\varepsilon_k\ot\chi}}t^{lm}\om_k.
\end{equation}
If one shows the non-vanishing of the integral
for some $m\geq 0$, then we have the non-vanishing $[t^{lm}\om_k]\ne 0$ 
of the cohomology class, and hence it gives a basis of
the $\varepsilon_k\ot\chi$-part of $H^{(l)}_\dR$.
Then the period is given by
\begin{equation}\label{pff1-lem1}
\mathrm{Period}([H^{(l)}]^{\varepsilon_k\ot\chi})
\sim_{\ol{\Q}^\times}\int_{\Delta}t^{lm}\om_k.
\end{equation}
Let us compute the integral \eqref{pff1-lem0}.
\begin{align*}
\int_{\Delta}t^{lm}\om_k
&=
\int_{\Delta}t^{k+lm-1}q(t^l)\,dt\wedge \omega^{(l)}\\
&=
\int_0^1t^{k+lm-1}q(t^l)\,dt\int_{\delta_t}\omega^{(l)}\\
&=\frac{1}{l}
\int_0^1t^{m+k/l-1}q(t)f_1(t)\,dt\\
&=\frac{\lambda_0}{l}\int_0^1 (p_0(t)F_1(t)+p_1(t)F'_1(t))t^{k/l+m-1}dt
\quad\mbox{(Lemma \ref{key-lem})}\\
&=:\frac{\lambda_0}{l}I_m.
\end{align*}

%
%

\begin{prop}\label{pff1-5-1}
Let $p_0(t)=\sum_i d_i t^i$and $p_1(t)=\sum_i d_i' t^i$.
Put $q:=k/l$ and
\begin{align}\label{pff1-5-1-a} 
a_n&:=\frac{(\alpha_1+q)_n(\alpha_2+q)_n}{(1-\beta_1+q)_n(1-\beta_2+q)_n},\quad n\geq0,
\\
\label{pff1-5-1-C} 
C_m&:=\sum_{i\ge -1} (d_i- d'_{i+1}(q+m+i)) a_{m+i},\quad m\geq 1
\end{align}
where $d_{-1}:=0$.
Then we have
\[
I_m =C_m \cdot
\Gamma\left({\alpha_1+q,\alpha_2+q \atop 1-\beta_1+q,1-\beta_2+q}\right).
\]
Moreover, for infinitely many $m \ge 1$, we have $C_m \ne 0$ and hence
$[t^{lm}\Omega_k]\ne0$. 
\end{prop}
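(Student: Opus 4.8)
The plan is to reduce $I_m$ to a single evaluable integral and then to compute that integral by Gauss's summation theorem. First I would treat the $F_1'$-term by integration by parts: since $(t-1)\mid p_1(t)$ we have $p_1(1)=0$, and (after the normalization of $q(t)$) $p_0,p_1$ vanish to high order at $t=0$, so the boundary terms drop out and $\int_0^1 p_1(t)F_1'(t)t^{q+m-1}dt=-\int_0^1 F_1(t)\big(p_1'(t)t^{q+m-1}+(q+m-1)p_1(t)t^{q+m-2}\big)dt$. Writing $J_n:=\int_0^1 t^{q+n-1}F_1(t)\,dt$ and collecting powers of $t$, a short computation rewrites $I_m=\sum_{i\ge-1}\big(d_i-d'_{i+1}(q+m+i)\big)J_{m+i}$, with the convention $d_{-1}=0$ (the index $i=-1$ gives $J_{m-1}$, which is harmless since $m\ge1$). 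Thus everything reduces to the evaluation of $J_n$.

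Next I would compute $J_n$ for $n\ge0$: inserting $F_1(t)=t^{\alpha_1}\sum_{\nu\ge0}\frac{(\alpha_1+\beta_1)_\nu(\alpha_1+\beta_2)_\nu}{(\nu!)^2}(1-t)^\nu$ and integrating term by term against the beta integral gives $J_n=\frac{\Gamma(q+n+\alpha_1)}{\Gamma(q+n+\alpha_1+1)}\,{}_2F_1\left({\alpha_1+\beta_1,\alpha_1+\beta_2 \atop q+n+\alpha_1+1};1\right)$. The hypothesis $\alpha_1+\alpha_2+\beta_1+\beta_2=1$ makes the parameter excess of this ${}_2F_1$ equal to $q+n+\alpha_2$, so Gauss's theorem applies and, using $1-\alpha_1-\beta_1-\beta_2=\alpha_2$, yields $J_n=\frac{\Gamma(q+n+\alpha_1)\Gamma(q+n+\alpha_2)}{\Gamma(q+n+1-\beta_1)\Gamma(q+n+1-\beta_2)}$. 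Dividing by the value at $n=0$ turns the $\Gamma$-quotient into the Pochhammer ratio $a_n$, so $J_n=a_n\,\Gamma\left({\alpha_1+q,\alpha_2+q \atop 1-\beta_1+q,1-\beta_2+q}\right)$ and hence $I_m=C_m\,\Gamma\left({\alpha_1+q,\alpha_2+q \atop 1-\beta_1+q,1-\beta_2+q}\right)$ with $C_m$ as in the statement. (Convergence of $J_n$ and the applicability of Gauss's formula hold once the representatives $\alpha_j^\chi$ are chosen suitably, or by analytic continuation in the parameters, a point I would make precise.)

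For the non-vanishing I would argue with a generating function. Since $p_i=q\cdot q_i$, the function $G:=p_0F_1+p_1F_1'$ equals $q\cdot\theta F_1=\frac{q}{\lambda_0}f_1$ (Lemma~\ref{key-lem}); it is analytic at $t=1$ and $G\not\equiv0$ because $f_1\not\equiv0$, so $r:=\ord_{t=1}G$ is a finite non-negative integer and $G(t)=g_r(1-t)^r+O((1-t)^{r+1})$ near $t=1$ with $g_r\ne0$. Suppose, toward a contradiction, that $I_m=0$ for all $m\gg0$; then $\sum_{m\ge1}I_m x^{m-1}=\int_0^1\frac{G(t)t^q}{1-tx}\,dt$ would be a polynomial in $x$. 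But its $r$-th $x$-derivative is $r!\int_0^1\frac{G(t)t^{q+r}}{(1-tx)^{r+1}}\,dt$, and the substitutions $s=1-t$, $u=1-x$, $w=s/u$ show that, as $x\to1^-$, the contribution of a small neighborhood of $t=1$ is $g_r\int_1^{\varepsilon/u}\frac{w^r}{(1+w)^{r+1}}\,dw+O(1)$, which is unbounded (the integral grows like $\log(1/u)$), while the contribution of $\{t\le1-\varepsilon\}$ and of the lower-order terms of $G$ near $t=1$ stays bounded; this contradicts polynomiality. Hence $I_m\ne0$, and so $C_m\ne0$, for infinitely many $m$. Since $\Gamma\left({\alpha_1+q,\alpha_2+q \atop 1-\beta_1+q,1-\beta_2+q}\right)\ne0$ and $\int_\Delta t^{lm}\om_k=\frac{\lambda_0}{l}I_m$ with $\lambda_0\ne0$ (Lemma~\ref{key2}), it follows that $[t^{lm}\om_k]\ne0$ for those $m$, as claimed.

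The integration by parts, the bookkeeping that produces the coefficient $d_i-d'_{i+1}(q+m+i)$, and the beta/Gauss evaluation are all routine. I expect the main obstacle to be the non-vanishing step: one has to verify carefully that the logarithmic divergence of the $r$-times differentiated generating function at $x=1^-$ is genuinely present --- not cancelled by the subleading behaviour of $G$ near $t=1$ nor by the part of the integral away from $t=1$ --- and one also has to control the convergence of the various integrals for the geometrically determined representatives of $\alpha_j^\chi$ and $\beta_j^\chi$.
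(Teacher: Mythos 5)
Your proposal is correct, and while the evaluation of $I_m$ runs close to the paper's, the non-vanishing argument is genuinely different. For the formula: both you and the paper compute $\int_0^1F_1(t)t^{q+n-1}\,dt$ by the beta integral plus Gauss's theorem, getting $a_n\Gamma\left({\alpha_1+q,\alpha_2+q\atop 1-\beta_1+q,1-\beta_2+q}\right)$; you then treat the $p_1F_1'$ term by integration by parts (legitimate, since $p_1(1)=0$ and $p_0,p_1$ vanish to high order at $t=0$ kill the boundary terms), whereas the paper uses the derivative formula for ${}_2F_1$ together with the contiguous relation of Lemma \ref{otsubo2}; both routes produce the same coefficient $d_i-d'_{i+1}(q+m+i)$. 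For the non-vanishing, the paper proves a purely algebraic lemma: any $2r$ consecutive vectors $(a_{i+1},\dots,a_{i+r},(i+1)a_{i+1},\dots,(i+r)a_{i+r})$ are linearly independent, shown by factoring the determinant into Pochhammer symbols times a determinant of values $P_j(i)$ of explicit polynomials and using $\alpha_i+\beta_j\notin\Z$ (Lemma \ref{lem-basis}) to exclude a nontrivial relation $\sum_j(c_j+d_jt)P_j(t)=0$. You instead argue analytically: if $I_m=0$ for all large $m$, the generating function $\int_0^1 G(t)t^q(1-tx)^{-1}dt$ with $G=q(t)\lambda_0^{-1}f_1$ would be a polynomial, yet its $r$-th derivative ($r=\ord_{t=1}G$, finite since $G$ is analytic at $t=1$ and $f_1\not\equiv 0$) diverges like $g_r\log\frac{1}{1-x}$ as $x\to1^-$; your checks that the subleading terms of $G$ and the region $t\le 1-\varepsilon$ contribute boundedly are exactly the right ones and do go through. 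Your approach buys simplicity and robustness: it bypasses the combinatorics of the $a_n$ entirely and shows directly that $I_m\ne 0$ infinitely often, using only $f_1\not\equiv0$ and analyticity at $t=1$ (with $C_m\ne0$ then following because the $\Gamma$-factor is finite and nonzero under the hypotheses of Theorem \ref{main}). The paper's determinant argument buys more precision: it yields a nonzero $C_m$ in every window of $2r$ consecutive integers and isolates the arithmetic input $\alpha_i+\beta_j\notin\Z$ responsible for the non-vanishing. Both are complete proofs.
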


\begin{proof}
Firstly, recall that
\begin{equation}\label{integral F}
\int_0^1 {}_2F_1\left({a,b\atop d}; xt\right) t^{c-1}(1-t)^{e-c-1}\,dt
=\Gamma\left({c,e-c \atop e}\right) {}_3F_2\left({a,b,c \atop d,e};x\right).
\end{equation}
Using this, we have
\begin{align*}
&\int_0^1F_1(t) t^{q+n-1}\, dt
\\&=\int_0^1 {}_2F_1\left({\alpha_1+\beta_1,\alpha_1+\beta_2\atop 1};t\right)(1-t)^{\alpha_1+q+n-1}\,dt
\\&= \Gamma\left({\alpha_1+q+n\atop \alpha_1+q+n+1}\right) {}_3F_2\left({\alpha_1+\beta_1,\alpha_1+\beta_2,1 \atop 1,\alpha_1+q+n+1};1\right)
\\&= \Gamma\left({\alpha_1+q+n\atop \alpha_1+q+n+1}\right) {}_2F_1\left({\alpha_1+\beta_1,\alpha_1+\beta_2 \atop \alpha_1+q+n+1};1\right)
\\&=\Gamma\left({\alpha_1+q+n, \alpha_2+q+n\atop1-\beta_1+q+n, 1-\beta_2+q+n}\right)
\\&=\Gamma\left({\alpha_1+q, \alpha_2+q\atop1-\beta_1+q, 1-\beta_2+q}\right)a_n
\end{align*}
where we used Euler's formula and $\alpha_1+\alpha_2+\beta_1+\beta_2=1$. 
Hence we have
$$\int_0^1 p_0(t) F_1(t) t^{q+m-1}\, dt = \Gamma\left({\alpha_1+q, \alpha_2+q\atop1-\beta_1+q, 1-\beta_2+q}\right)\sum_i d_i a_{i+m}. $$
Secondly, recall that 
\begin{equation*}
\frac{d}{dt}{}_2F_1\left({a,b \atop c}; t\right) = \frac{ab}{c} {}_2F_1\left({a+1,b+1\atop c+1}; t \right).
\end{equation*}
Using this, we have
\begin{align*}
\frac{d}{dt}F_1(t)
=& \alpha_1 F_1(t)t^{-1} 
\\& -(\alpha_1+\beta_1)(\alpha_1+\beta_2){}_2F_1\left({\alpha_1+\beta_1+1, \alpha_1+\beta_2+1 \atop 2};1-t\right)t^{\alpha_1}. 
\end{align*}
The integral for the first term is already computed. For the second term, we have similarly as above
\begin{align*}
&\int_0^1{}_2F_1\left({\alpha_1+\beta_1+1, \alpha_1+\beta_2+1 \atop 2};1-t\right)t^{\alpha_1+q+n-1}
\\&=\frac{1}{\alpha_1+q+n}
{}_3F_2\left({\alpha_1+\beta_1+1,\alpha_1+\beta_2+1,1\atop \alpha_1+q+n+1,2};1\right). 
\end{align*}
Applying Lemma \ref{otsubo2} with $q=0$, we have
\begin{align*}
&\frac{(\alpha_1+\beta_1)(\alpha_1+\beta_2)}{\alpha_1+q+n} 
{}_3F_2\left({\alpha_1+\beta_1+1,\alpha_1+\beta_2+1,1\atop \alpha_1+q+n+1,2};1\right)\\
&=\frac{1}{\alpha_1+q+n}\Gamma\left({\alpha_1+q+n+1, \alpha_2+q+n-1\atop -\beta_1+q+n, \beta_2+q+n}\right)-1
\\&=\Gamma\left({\alpha_1+q, \alpha_2+q\atop1-\beta_1+q, 1-\beta_2+q}\right)(\alpha_1+q+n-1)a_{n-1}-1.
\end{align*}
Combining these and using $p_1(1)=\sum_i d_i'=0$, we obtain
\begin{align*}
&\int_0^1 p_1(t) F_1'(t)t^{q+m-1}\,dt
\\& =\Gamma\left({\alpha_1+q, \alpha_2+q\atop1-\beta_1+q, 1-\beta_2+q}\right)\sum_i d_i' (\alpha_1 a_{i+m-1} - (\alpha_1+q+i+m-1)a_{i+m-1})
\\& =-\Gamma\left({\alpha_1+q, \alpha_2+q\atop1-\beta_1+q, 1-\beta_2+q}\right)\sum_i d_i' (q+i+m-1)a_{i+m-1}.
\end{align*}
Hence we obtain the first assertion. 
The second assertion follows from the lemma below. 
\end{proof}

\begin{lem}Let $\{a_n\}_{n \ge 0}$ be as above. 
Let $\{x_n\}_{n=1}^r$, $\{y_n\}_{n=1}^r$ be sequences of finite length such that $x_n \ne 0$ for some $n$ and $y_n\ne 0$ for some $n$. 
Then. 
$$\sum_{n=1}^r (x_n a_{n+m} +y_n (n+m)a_{n+m})$$
is non-trivial for infinitely many $m \ge 0$. 
\end{lem}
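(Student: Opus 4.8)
The plan is to reduce the whole statement to the non-vanishing of a single rational function of $m$. Write $q=k/l$ and (with $\alpha_i=\alpha_i^\chi$, $\beta_j=\beta_j^\chi$)
\[
\varphi(m):=\frac{(\alpha_1+q+m)(\alpha_2+q+m)}{(1-\beta_1+q+m)(1-\beta_2+q+m)}\in\ol{\Q}(m),\qquad R_n(m):=\prod_{i=0}^{n-1}\varphi(m+i).
\]
From the shape of $a_n$ in Proposition~\ref{pff1-5-1} one has $a_{n+m}=a_m\,R_n(m)$ for all $n,m\ge0$, and moreover $a_m\ne0$ and $R_n(m)$ is finite and non-zero for every integer $m\ge0$: indeed the hypothesis of Theorem~\ref{main} gives $q+\alpha_i\notin\Z$ and $-q+\beta_j\notin\Z$ (equivalently $1-\beta_j+q\notin\Z$), so none of the factors $q+\alpha_i+m+i'$ or $1-\beta_j+q+m+i'$ can vanish. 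Hence the sum in the Lemma equals $a_m\cdot P(m)$, where
\[
P(m):=\sum_{n=1}^{r}\bigl(x_n+y_n(n+m)\bigr)R_n(m)\in\ol{\Q}(m).
\]
Since $a_m\ne0$ and a non-zero rational function has only finitely many zeros, it suffices to prove $P\not\equiv0$; then $P(m)\ne0$, i.e. the sum is non-zero, for all but finitely many $m\ge0$.

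To prove $P\not\equiv0$ I would argue via its pole structure. Discarding the indices $n$ with $(x_n,y_n)=(0,0)$, we may assume $(x_r,y_r)\ne(0,0)$. Multiplying $P$ by $\prod_{i=0}^{r-1}(1-\beta_1+q+m+i)(1-\beta_2+q+m+i)$ yields a polynomial
\[
Q(m)=\sum_{n=1}^{r}\bigl(x_n+y_n(n+m)\bigr)\widetilde R_n(m),
\]
where
\[
\widetilde R_n(m):=\prod_{i=0}^{n-1}(\alpha_1+q+m+i)(\alpha_2+q+m+i)\cdot\prod_{i=n}^{r-1}(1-\beta_1+q+m+i)(1-\beta_2+q+m+i),
\]
and $Q\equiv0\iff P\equiv0$. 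Put $m^\ast:=-(1-\beta_1+q)-(r-1)$ and $m^{\ast\ast}:=-(1-\beta_2+q)-(r-1)$. For $n<r$ the factor $1-\beta_1+q+m+(r-1)$ (resp. $1-\beta_2+q+m+(r-1)$) divides $\widetilde R_n$, so $\widetilde R_n(m^\ast)=\widetilde R_n(m^{\ast\ast})=0$; whereas $\widetilde R_r$ has no $\beta$-factors left and $\widetilde R_r(m^\ast)=\prod_{i=0}^{r-1}(\alpha_1+\beta_1-r+i)(\alpha_2+\beta_1-r+i)\ne0$ because $\alpha_i+\beta_j\notin\Z$ by Lemma~\ref{lem-basis}; likewise $\widetilde R_r(m^{\ast\ast})\ne0$.

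Now $Q(m^\ast)=(x_r+y_r(r+m^\ast))\,\widetilde R_r(m^\ast)$. If this is non-zero, then $Q\not\equiv0$. Otherwise $x_r+y_r(r+m^\ast)=0$; since $(x_r,y_r)\ne(0,0)$ this forces $y_r\ne0$, so $x_r+y_r(r+m)$ is a genuine degree-one polynomial with unique root $m^\ast$. If $\beta_1\ne\beta_2$, then $m^{\ast\ast}\ne m^\ast$, hence $x_r+y_r(r+m^{\ast\ast})\ne0$ and $Q(m^{\ast\ast})\ne0$. If $\beta_1=\beta_2$, then $m^\ast=m^{\ast\ast}$ and every $\widetilde R_n$ with $n<r$ vanishes at $m^\ast$ to order $\ge2$, so, differentiating,
\[
Q'(m^\ast)=\sum_{n=1}^{r}\Bigl(y_n\widetilde R_n(m^\ast)+\bigl(x_n+y_n(n+m^\ast)\bigr)\widetilde R_n'(m^\ast)\Bigr)=y_r\,\widetilde R_r(m^\ast)\ne0.
\]
In every case $Q\not\equiv0$, hence $P\not\equiv0$, completing the argument.

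The main obstacle is the degenerate situation in which the naive $m\to\infty$ asymptotics of $P$ (controlled at leading order by $\sum_n y_n$ and then by $\sum_n x_n$) both vanish: there one cannot read off the conclusion from the dominant term, and the pole/zero bookkeeping above — crucially the non-integrality of $\alpha_i+\beta_j$ — is genuinely needed. Within that, the sub-case $\beta_1=\beta_2$, where the two candidate poles $m^\ast,m^{\ast\ast}$ collide and one is forced to pass to $Q'$, is the most delicate point to get right.
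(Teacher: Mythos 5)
Your argument is correct, and it takes a genuinely different route to what is ultimately the same polynomial identity. The paper views the sum as the pairing of $(x_1,\dots,x_r,y_1,\dots,y_r)$ with the vector $e_m=(a_{m+1},\dots,a_{m+r},(m+1)a_{m+1},\dots,(m+r)a_{m+r})$ and proves that any $2r$ consecutive vectors $e_{m+1},\dots,e_{m+2r}$ are linearly independent, by factoring the $2r\times 2r$ determinant until its entries become values of the polynomials $P_j(t)=(a+t)_j(b+t)_j(c+t+j)_{r-j}(d+t+j)_{r-j}$ --- which are precisely your $\widetilde R_j$ --- and then excluding a relation $\sum_j(c_j+d_jt)P_j(t)\equiv 0$ by stripping off the factors $(c+t+j)(d+t+j)$ one index at a time. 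You instead use $a_{n+m}=a_m R_n(m)$ to write the whole sum as $a_m$ times a single rational function $P(m)$, so that vanishing for infinitely many $m$ already forces the polynomial identity $Q\equiv 0$, which you refute by evaluating at the non-integral point $m^{\ast}=\beta_1-q-r$ (passing to $Q'$ when $\beta_1=\beta_2$ makes that root double). Both proofs consume exactly the same inputs --- $\alpha_i+\beta_j\notin\Z$ from Lemma \ref{lem-basis}, and $\alpha_i+q,\ \beta_j-q\notin\Z$ from the hypothesis of Theorem \ref{main} --- and the same polynomials, but yours bypasses the determinant computation entirely and yields the slightly sharper conclusion that the sum is non-zero for all but finitely many $m$, whereas the paper's version gives the complementary quantitative fact that every window of $2r$ consecutive values of $m$ contains a non-vanishing one. (Like the paper's own proof, yours in fact only uses $(x,y)\ne(0,0)$, not the stated hypothesis that each of the two sequences is individually non-zero.)
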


\begin{proof}
Put
$$e_i=(a_{i+1},\dots, a_{i+r}, (i+1)a_{i+1}, \dots, (i+r)a_{i+r}) \in \Q^{\oplus 2r}.$$
It suffices to show that $e_{m+1},\dots, e_{m+2r}$ are linearly independent. 
Put $a=\alpha_1+q$, $b=\alpha_2+q$, $c=1-\beta_1+q$, $d=1-\beta_2+q$. 
Since $(\alpha)_{i+j}=(\alpha)_i(\alpha+i)_j$, we have for $j=1,\dots, r$, 
\begin{align*}
a_{i+j}
&=\frac{(a)_i(b)_i}{(c)_i(d)_i} \cdot \frac{(a+i)_j(b+i)_j}{(c+i)_j(d+i)_j}
\\&=\frac{(a)_i(b)_i}{(c)_i(d)_i} \cdot \frac{(a+i)_j(b+i)_j(c+i+j)_{r-j}(d+i+j)_{r-j}}{(c+i)_{r}(d+i)_{r}}. 
\end{align*}
So we have the determinant
\begin{align*}
\begin{vmatrix}e_{m+1}\\ \vdots \\ e_{m+2r} \end{vmatrix}
= \prod_{i=m+1}^{m+2r} \frac{(a)_i(b)_i}{(c)_i(d)_i(c+i)_{r}(d+i)_{r}}\cdot 
\begin{vmatrix} f_{m+1} \\ \vdots \\ f_{m+2r}
\end{vmatrix}
\end{align*}
where we put
$$f_i=(b_{i,1},\dots, b_{i,r}, (i+1)b_{i,1},\dots, (i+r)b_{i,r})$$
with
$$b_{i,j}= (a+i)_j(b+i)_j(c+i+j)_{r-j}(d+i+j)_{r-j}.$$
For each $j=1,\dots, r$, 
$$P_j(t):=(a+t)_j(b+t)_j(c+t+j)_{r-j}(d+t+j)_{r-j}$$
is a polynomial of degree $2r$ such that $P_j(i)=b_{i,j}$. 
Suppose that the above determinant is $0$. 
Then, there exist $c_j$, $d_j$ which are not all $0$ such that the polynomial $\sum_{j=1}^r (c_j+d_j t)P_j(t)$ vanishes at $t=m+1,\dots, m+2r$. 
Since every $P_j(t)$ is divisible by $(a+t)(b+t)$ which does not vanish at integers, we have a polynomial of degree $2r-1$ with $2r$ distinct roots. Hence we have $\sum_{j=1}^r (c_j+d_j t)P_j(t)=0$.  
Since $P_1(t), \dots , P_{r-1}(t)$ are divisible by $(c+t+r-1)(d+t+r-1)$, so is $(c_r+d_r t)P_r(t)$. 
On the other hand, by Lemma \ref{lem-basis}, $P_r(t)$ is not divisible by $(c+t+r-1)(d+t+r-1)$, 
hence $c_r=d_r=0$. Proceeding similarly, we obtain $c_j=d_j=0$ for all $j$, which is a contradiction. 
\end{proof}
We finish the proof of Theorem \ref{main}.
The constant $C_m$ is an nonzero algebraic number for infinitely many $m$'s.
Then \eqref{pff1-lem1} holds so that we have
\[\mathrm{Period}([H^{(l)}]^{\varepsilon_k\ot\chi}) 
\sim_{\ol\Q^\times} 
\lambda_0 I_m
\sim_{\ol\Q^\times} 
\lambda_0\cdot
 \Gamma\left({\alpha_1+q,\alpha_2+q \atop 1-\beta_1+q,1-\beta_2+q}\right).
 \]
Since $\lambda_0\in 2\pi i\ol{\Q}^\times$ by Lemma \ref{key2}, 
we are done.

\begin{rem}Since $\alpha_1+\alpha_2+\beta_1+\beta_2\in\Z$, we can also write
$$\mathrm{Period}([H^{(l)}]^{\varepsilon_k\ot\chi}) 
\sim_{\ol\Q^\times} B(\alpha_1+q, \beta_1-q)B(\alpha_2+q,\beta_2-q).$$
\end{rem}
The above proof also shows that the map \eqref{local1-com-4} is bijective.
Hence
\begin{cor}\label{local1-com-5}
$e_iH^2(\ol{U}^{(l)})_\fib\cong e_i H^{(l)}$.
\end{cor}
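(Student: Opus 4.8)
The plan is to deduce the Corollary from the monomorphism \eqref{local1-com-3} by a dimension count, the numerical input having already been supplied by Proposition \ref{pff1-5-1}. Recall that \eqref{local1-com-3} is a monomorphism
\[
e_iH^2(\ol{U}^{(l)})_\fib\hra e_iH^{(l)}
\]
of de Rham-Hodge structures compatible with the $K_i[G^{(l)}]$-action, so it remains only to prove surjectivity. After extending scalars to $\ol\Q$, the $K_i$-action splits both de Rham realizations into eigencomponents indexed by the embeddings $\varepsilon_k\ot\chi\in I_i$ of $K_i$, and \eqref{local1-com-3} respects this splitting because it is induced by morphisms of $K[G^{(l)}]$-modules. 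Under the hypothesis of Theorem \ref{main} the multiplication on $e_iH^{(l)}$ is maximal, so each eigencomponent $[e_iH^{(l)}_\dR]^{\varepsilon_k\ot\chi}$ is a one-dimensional $\ol\Q$-vector space.

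First I would identify the $\varepsilon_k\ot\chi$-part of \eqref{local1-com-3} with the map \eqref{local1-com-4} and then exhibit a nonzero element in its source. This is exactly what the computation $\int_\Delta t^{lm}\om_k=\frac{\lambda_0}{l}I_m$ of \S\ref{computing-sect} together with Proposition \ref{pff1-5-1} gives: for infinitely many $m\ge 1$ the constant $C_m$ is a nonzero algebraic number, whence (using $\lambda_0\neq 0$ from Lemma \ref{key-lem} and the fact that the $\Gamma$-factor is finite and nonzero because $\alpha_j+k/l,\,-\beta_j+k/l\notin\Z$) one has $\int_\Delta t^{lm}\om_k\neq 0$, so $[t^{lm}\om_k]$ is a nonzero class in $[e_iH^2_\dR(\ol{U}^{(l)}/\ol\Q)_\fib]^{\varepsilon_k\ot\chi}$. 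A nonzero map of $\ol\Q$-lines being an isomorphism, \eqref{local1-com-4} is bijective for every $\varepsilon_k\ot\chi\in I_i$; summing over $I_i$ shows that the de Rham realization of \eqref{local1-com-3} is bijective.

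Finally I would upgrade ``bijective on $H_\dR$'' to ``isomorphism of de Rham-Hodge structures with $K_i[G^{(l)}]$-multiplication''. This is formal: the comparison isomorphism forces the Betti realization of \eqref{local1-com-3} to be bijective as well, and strictness of a morphism of Hodge structures for the Hodge (and weight) filtration shows that the inverse linear maps again respect $F^\bullet$ (and $W_\bullet$); hence \eqref{local1-com-3} is an isomorphism, which is the assertion $e_iH^2(\ol{U}^{(l)})_\fib\cong e_iH^{(l)}$. I do not expect a genuine obstacle here, since all the analytic work — the closed form for $I_m$ and the non-vanishing of $C_m$ — has already been carried out in the proof of Theorem \ref{main}, so the Corollary is essentially bookkeeping. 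The only point deserving care is confirming that, as $\varepsilon_k\ot\chi$ ranges over $I_i$, the map \eqref{local1-com-4} sweeps out \emph{every} eigencomponent; this holds because each element of $I_i$ has the form $\varepsilon_k\ot\chi$ with $0<k<l$ prime to $l$ and $\chi:K\hra\ol\Q$ arbitrary, precisely the cases treated in the proof of the period formula.
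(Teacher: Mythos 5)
Your proposal is correct and is essentially the paper's own argument: the authors dispose of the corollary in one line by observing that the non-vanishing of $\int_\Delta t^{lm}\om_k$ (via $C_m\ne 0$ and $\lambda_0\ne 0$) makes the injection \eqref{local1-com-4} between one-dimensional eigencomponents bijective, hence \eqref{local1-com-3} an isomorphism. Your write-up merely makes explicit the eigencomponent bookkeeping and the formal upgrade to an isomorphism of de Rham--Hodge structures, which the paper leaves implicit.
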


\section{Proof of the Regulator Formula}\label{main-reg-pf-sect}
In this section we prove Theorem \ref{main-reg}.
We fix a projection $K[G^{(l)}]\to K_i$ and the idempotent $e_i$ which satisfy
the assumption of Theorem \ref{main}, i.e. none of \eqref{main-cond} is an integer.
Recall the following notations:  
\begin{align*}
&
D^{(l)}_0:=(f^{(l)})^{-1}(0),\quad
D^{(l)}_\infty:=(f^{(l)})^{-1}(\infty),\quad 
D^{(l)}_i:=(f^{(l)})^{-1}(\zeta^i_l),~(1\leq i\leq l)
\\&
D_{ss}^{(l)}:=\sum_{i=1}^lD^{(l)}_i,\quad
D^{(l)}:=D^{(l)}_0+D^{(l)}_\infty+D^{(l)}_{ss},\quad  \E:=D^{(l)}_0+D^{(l)}_{ss}, 
\\&
\ol{U}^{(l)}:=X^{(l)}\setminus D^{(l)}_\infty, \quad  U^{(l)}:=X^{(l)}\setminus D^{(l)}. 
\end{align*}

\subsection{Cycle $\Gamma$}\label{gamma-sect}
Let $\gamma_{\Q,t}\in H_1(X_t,\Q)$ be a homology cycle
which does not vanish at $t=1$.
We then define a cycle \[\Gamma\in H_2(\ol{U}^{(l)},D_0^{(l)}+D^{(l)}_l;\Q)\] 
to be the Lefschetz thimble
obtained by sweeping $\gamma_{\Q,t}$ over the segment $0\leq t \leq1$.
Since $k/l+\alpha^\chi_j\not\in\Z$ by the assumption of Theorem \ref{main},
one has $e_iH_1(D_0,\ol\Q)=0$
(cf. \S \ref{delta-sect}).
Hence we obtain a cycle
\[e_i\Gamma
\in H_2(\ol{U}^{(l)},D_{ss}^{(l)};\Q),
\quad D^{(l)}_{ss}:=
\sum_{i=1}^lD^{(l)}_i\]
with nontrivial boundary:
\[
\partial(e_i\Gamma)=e_i\gamma_{\Q,1}\ne0
\in e_i H_1(D^{(l)}_{ss},\Q)\cong K_i.
\]

\subsection{Proof of Theorem \ref{main-reg} : Step 1}
We want to compute the extension data of \eqref{local1}.
The auto-duality \eqref{auto-dual} for $H^{(l)}$ together with the Verdier duality
yields a commutative diagram
\[
\xymatrix{
0\ar[r] &H^{(l)}(2)\ar[r]\ar[d]^\cong& 
M^{(l)}(2)\ar[r]\ar[d]^\cong& E(2)\ar[r]\ar[d]^\cong&0\\
0\ar[r] &(H^{(l)})^*\ar[r]\ar[d]& 
(H^1(\P^1,j_!M^{(l)}))^*\ar[r]\ar[d]& (i^{-1}j_*\cM^{(l)})^*\ar[r]\ar[d]^\cong&0\\
0\ar[r] &H_2(X^{(l)})/H_{2}(D^{(l)})\ar[r]& 
H_2(X^{(l)},D^{(l)})\ar[r]& H_1(D^{(l)})\ar[r]&0\\
0\ar[r] &H_2(\ol{U}^{(l)})/H_2(\E)\ar[r]\ar[u]& 
H_{2}(\ol{U}^{(l)},\E)\ar[r]\ar[u]& 
H_1(\E)\ar[u]
}
\]
with exact rows where $j:\P^1\setminus\{0,1,\infty\}\hra\P^1$ and $i:\{0,1,\infty\}\hra\P^1$.
It follows from Corollary \ref{local1-com-5} that the exact sequence
\begin{equation}\label{reg-1}
\xymatrix{
0\ar[r] &e_iH_2(\ol{U}^{(l)})/H_2(\E)\ar[r]& 
e_iH_2(\ol{U}^{(l)},\E)\ar[r]& 
e_iH_1(\E)\ar[r]\ar@{=}[d]&0\\
&[e_iH^2(\ol{U}^{(l)},\Q)_\fib]^*\ar[u]^\cong&&
e_iH_1(D^{(l)}_{ss})
}
\end{equation}
is isomorphic to the $e_i$-part of \eqref{local1}.

\medskip

Let us discuss \eqref{reg-1}.
Since $e_iH^1(D_{ss},\Q)$ is a Hodge structure of type $(0,0)$, 
there is an isomorphism
\[
\Pi:e_i F^1H^2_\dR(\ol{U}^{(l)},\E)
\os{\cong}{\lra} e_i F^1H^2_\dR(\ol{U}^{(l)})_\fib.
\]
For $\omega\in e_i F^1H^2_\dR(\ol{U}^{(l)})_\fib$ we write
\[\omega_{(\ol{U}^{(l)},\E)}:=\Pi^{-1}(\omega)\in e_i F^1H^2_\dR(\ol{U}^{(l)},\E).\]
The following is well-known to specialists and 
indeed it can be proven immediately from the definition (the detail
 is left to the reader).
\begin{prop}
Let 
\[
\check{\rho}:e_iH_1(D_{ss}^{(l)},\Q)\lra\Ext^1_\MdRH(\Q,e_iH^2(\ol{U}^{(l)})_\fib)
\]
be the connecting homomorphism
arising from \eqref{reg-1}.
Let $\Gamma_x\in e_iH_2(\ol{U}^{(l)},\E;\Q)$ be a lifting of $x\in 
e_iH_1(D_{ss}^{(l)},\Q)$.
Then, under the canonical isomorphism 
\[
\Ext^1_\MdRH(\Q,e_iH^2(\ol{U}^{(l)})_\fib)\cong
\Coker\left[e_iH_2(\ol{U}^{(l)},\Q)
\to \Hom(e_iF^1H^2_\dR(\ol{U}^{(l)})_\fib,\C)\right]
\]
we have
\[
\check{\rho}(x)=\left[
\omega\longmapsto\int_{\Gamma_x}
\omega_{(\ol{U}^{(l)},\E)}\right].
\]
\end{prop}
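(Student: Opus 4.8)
The plan is to unwind the definition of the connecting homomorphism of \eqref{reg-1} and translate it, through the standard description of $\Ext^1$ in the category of mixed de Rham-Hodge structures recalled in \S4, into the period pairing. Throughout we use the identification of the left-hand term of \eqref{reg-1} with $[e_iH^2(\ol{U}^{(l)},\Q)_\fib]^*$ furnished by the vertical arrow there, so that the cokernel in the statement is literally the one computing $\Ext^1_\MdRH$ of this dual object.

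First I would observe that $e_iH_1(\E)=e_iH_1(D^{(l)}_{ss})$ is a pure Hodge structure of type $(0,0)$, being dual to $e_iH^1(D^{(l)}_{ss},\Q)$; hence it is a sum of copies of $\Q(0)$, and every $x\in e_iH_1(D^{(l)}_{ss},\Q)$ spans a sub-de Rham-Hodge structure $\Q\cdot x\cong\Q(0)$. By definition, $\check\rho(x)$ is the class of the extension obtained by pulling \eqref{reg-1} back along $x\colon\Q(0)\to e_iH_1(\E)$. A lift $\Gamma_x\in e_iH_2(\ol{U}^{(l)},\E;\Q)$ of $x$ is exactly the datum of a $\Q$-Betti splitting of this pulled-back extension, well defined up to adding a class from $e_iH_2(\ol{U}^{(l)},\Q)$.

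Next I would apply the isomorphism $\Ext^1_\MdRH(\Q,N)\cong(\C\ot_{\ol\Q}N_\dR)/(F^0N_\dR+\iota^{-1}(N_B))$ to $N=[e_iH^2(\ol{U}^{(l)})_\fib]^*$. Since $e_iH^2(\ol{U}^{(l)})_\fib$ has Hodge type $(0,2)+(1,1)+(2,0)$, the annihilator in $N_\dR$ of $F^1H^2_\dR(\ol{U}^{(l)})_\fib$ is exactly $F^0N_\dR$, so dualizing gives
\[
\Ext^1_\MdRH(\Q,N)\cong\Coker\left[e_iH_2(\ol{U}^{(l)},\Q)\to\Hom\bigl(e_iF^1H^2_\dR(\ol{U}^{(l)})_\fib,\C\bigr)\right],
\]
where the map is the period pairing $\gamma\mapsto(\omega\mapsto\int_\gamma\omega)$ and where $e_iH_2(\E)$ disappears from the source because fibral $F^1$-classes restrict to zero on the components of $\E$. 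Under this isomorphism, the standard recipe for the class of the pulled-back extension gives the functional $\omega\mapsto\int_{\Gamma_x}\tilde\omega$, where $\tilde\omega\in F^1H^2_\dR(\ol{U}^{(l)},\E)$ is any lift of $\omega$ and the integral is the pairing of the relative cycle $\Gamma_x$ against the relative de Rham group: replacing $\Gamma_x$ by another lift, or $\tilde\omega$ by a lift differing by the image of a class from $H^1_\dR(\E)$, changes the value only by something in the image of $e_iH_2(\ol{U}^{(l)})$, i.e.\ by the very subgroup we quotient out. Finally, since $e_iF^1H^1_\dR(\E)=0$ (again by the type $(0,0)$ of $e_iH^1(\E)$), the lift $\tilde\omega$ is unique and equals $\omega_{(\ol{U}^{(l)},\E)}=\Pi^{-1}(\omega)$, which yields the asserted formula.

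The only real work here is bookkeeping, not mathematics: keeping track of filtration indices and Tate twists when passing to duals, checking that $x\colon\Q(0)\to e_iH_1(\E)$ is defined over $\ol{\Q}$ on the de Rham side (it is, as the relevant de Rham classes on $\E$ are classes of irreducible components, hence $\ol{\Q}$-rational), and transcribing the extension-class recipe into the period pairing. This is precisely why the statement is left to the reader in the text, and I would present only these points.
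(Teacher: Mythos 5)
Your sketch is correct and is precisely the standard unwinding that the paper has in mind: the authors simply declare the statement ``well-known to specialists,'' provable ``immediately from the definition,'' and leave the details to the reader, so your write-up supplies exactly the intended argument (pull back the extension along $x\colon\Q(0)\to e_iH_1(\E)$, use the Betti splitting given by $\Gamma_x$ and the vanishing of the pairing of $F^0$ of the relative homology against $F^1H^2_\dR(\ol{U}^{(l)},\E)$, and note that $\Pi$ being an isomorphism makes the $F^1$-lift $\omega_{(\ol{U}^{(l)},\E)}$ unique). The points you flag as bookkeeping --- the identification of $F^0$ of the dual with the annihilator of $F^1$, the vanishing of periods of fibral classes over cycles supported on $\E$, and the $\ol\Q$-rationality of $x$ on the de Rham side --- are indeed the only things to check, and you handle them correctly.
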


Let us see $\check{\rho}(x)$ more explicitly.
We first note that one can choose a lifting $\Gamma_x$ by applying 
an element $\alpha_x\in K[G^{(l)}]$ on the cycle 
$\Gamma$ constructed in \S \ref{gamma-sect}:
\[
\Gamma_x=\alpha_x\Gamma.
\]
Let $\omega$ be a de Rham cohomology class defined over $\ol{\Q}$.
Then $\omega_{(\ol{U}^{(l)},\E)}$ is a class defined over $\ol{\Q}$ as well. 
Let $\wt{\omega}\in \vg(\ol{U}^{(l)},\Omega^2_{\ol{U}^{(l)}/\ol{\Q}})$ satisfy
$[\wt{\omega}]=\omega$ in $H^2_\dR(\ol{U}^{(l)}/\ol{\Q})$.
Then since $\partial\Gamma_x\in H_1(D_{ss},\Q)$ we have
\begin{equation}\label{reg-exact1}
\int_{\Gamma_x}
\omega_{(\ol{U}^{(l)},\E)}=
\int_{\Gamma_x}
\wt{\omega}+c,\quad
\exists c\in\Image(H^1_\dR(D_{ss}/\ol{\Q})\ot H_1(D_{ss},\Q))=\ol{\Q}
\end{equation}
(cf. \cite{formula} \S 3.3).
Recall the rational 2-forms $t^{lm}\om_k$ from
the proof of period formula in \S \ref{main-pf-sect}.
It gives a basis of the $\varepsilon_k\ot\chi$-part of $e_i H^2_\dR(\ol{U}^{(l)})_\fib$.
Now let $\wt{\omega}=t^{lm}\om_k$. 
We then have
\begin{equation*}\label{reg-exact2}
\int_{\Gamma_x}
\omega_{(\ol{U}^{(l)},\E)}=
\int_{g\Gamma}
t^{lm}\om_k+c
=(\varepsilon_k\ot\chi)(\alpha_x)\cdot\int_\Gamma t^{lm}\om_k+c.
\end{equation*}
Finally let us recall the connecting homomorphism $\rho$ \eqref{connecting-def}
which arises from \eqref{local1}.
Since $\varepsilon_{-k}\ot{}^t\chi$-part
of \eqref{local1} corresponds to $\varepsilon_{k}\ot\chi$-part
of \eqref{reg-1}, we have
\[
\rho^{\varepsilon_{-k}\ot{}^t\chi}=
(\check\rho)^{\varepsilon_k\ot\chi}.
\]
Summing these up, we obtain the following.
\begin{prop}\label{prop-reg-exact3}
Let $\varepsilon_k\ot\chi\in I_i$.
Then for $x\in e_iE(2)$
there is a constant $c\in\ol{\Q}$ such that
\begin{equation}\label{reg-exact3}
\rho^{\varepsilon_{-k}\ot{}^t\chi}(x)=(\varepsilon_k\ot\chi)(\alpha_x)
\int_\Gamma t^{lm}\om_k+c.
\end{equation}
\end{prop}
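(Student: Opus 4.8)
The plan is to assemble \eqref{reg-exact3} from the three ingredients prepared just above: the duality identification of $\rho$ with $\check{\rho}$, the explicit integral formula for $\check{\rho}$, and the $G^{(l)}$-equivariance of the period integral. Since the preceding paragraphs already carry out all of the substantive work, the proof is essentially a matter of organizing it.

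First I would invoke duality. The auto-duality \eqref{auto-dual} of $H^{(l)}$ together with Verdier duality puts the $\varepsilon_{-k}\ot{}^t\chi$-part of the localization sequence \eqref{local1} in perfect pairing with the $\varepsilon_k\ot\chi$-part of \eqref{reg-1}, so that $\rho^{\varepsilon_{-k}\ot{}^t\chi}=(\check{\rho})^{\varepsilon_k\ot\chi}$, where $\check{\rho}\colon e_iH_1(D_{ss}^{(l)},\Q)\to\Ext^1_\MdRH(\Q,e_iH^2(\ol{U}^{(l)})_\fib)$ is the connecting map of \eqref{reg-1}. By the previous Proposition, for any lifting $\Gamma_x\in e_iH_2(\ol{U}^{(l)},\E;\Q)$ of $x$ one has $\check{\rho}(x)=\bigl[\,\omega\mapsto\int_{\Gamma_x}\omega_{(\ol{U}^{(l)},\E)}\,\bigr]$ under the canonical identification of $\Ext^1_\MdRH(\Q,e_iH^2(\ol{U}^{(l)})_\fib)$ with the cokernel of the period pairing.

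Next I would make everything explicit. Using $e_iH_1(D_0^{(l)},\ol{\Q})=0$ — which holds because none of $k/l+\alpha^\chi_j$ lies in $\Z$, by the hypothesis \eqref{main-cond} of Theorem \ref{main} — the Lefschetz thimble $\Gamma$ of \S\ref{gamma-sect} gives a cycle $e_i\Gamma\in H_2(\ol{U}^{(l)},D_{ss}^{(l)};\Q)$, and a lifting of $x$ may be taken of the form $\Gamma_x=\alpha_x\Gamma$ for a suitable $\alpha_x\in K[G^{(l)}]$. For the de Rham class I would use the algebraic $2$-form representative $\wt{\omega}=t^{lm}\om_k$ from \S\ref{main-pf-sect}, with $m$ chosen (via Proposition \ref{pff1-5-1}) so that $[t^{lm}\om_k]$ is a $\ol{\Q}$-basis of the $\varepsilon_k\ot\chi$-part of $e_iH^2_\dR(\ol{U}^{(l)})_\fib$; recall $e_iH^2(\ol{U}^{(l)})_\fib\cong e_iH^{(l)}$ by Corollary \ref{local1-com-5}. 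Then the difference between $\int_{\Gamma_x}\omega_{(\ol{U}^{(l)},\E)}$ and $\int_{\Gamma_x}\wt{\omega}$ lies in $\Image\bigl(H^1_\dR(D_{ss}^{(l)}/\ol{\Q})\ot H_1(D_{ss}^{(l)},\Q)\bigr)=\ol{\Q}$ by \eqref{reg-exact1}; call this algebraic number $c$. Finally, since $t^{lm}\om_k$ lies in the $\varepsilon_k\ot\chi$-eigenspace for the $K[G^{(l)}]$-action, $G^{(l)}$-equivariance of the integral gives $\int_{\alpha_x\Gamma}t^{lm}\om_k=(\varepsilon_k\ot\chi)(\alpha_x)\int_\Gamma t^{lm}\om_k$, and combining these relations yields \eqref{reg-exact3}.

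The argument is mostly bookkeeping; the only genuinely delicate point is the control of the constant $c$. One must check that passing from the relative de Rham class $\omega_{(\ol{U}^{(l)},\E)}$ to a globally defined algebraic $2$-form changes the integral only by a period of $H^1$ of $D_{ss}^{(l)}$, and that the relevant such period is algebraic — this is the content of \eqref{reg-exact1}, and it is here that the vanishing $e_iH_1(D_0^{(l)},\ol{\Q})=0$, hence the non-integrality hypothesis \eqref{main-cond} of Theorem \ref{main}, is really used: without it $\partial\Gamma_x$ would meet $D_0^{(l)}$ and contribute a further, potentially transcendental, period term.
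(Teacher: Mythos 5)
Your proposal is correct and follows essentially the same route as the paper: the identification $\rho^{\varepsilon_{-k}\ot{}^t\chi}=(\check{\rho})^{\varepsilon_k\ot\chi}$ via auto-duality and Verdier duality, the integral formula for $\check{\rho}$ on the lifting $\Gamma_x=\alpha_x\Gamma$, the comparison \eqref{reg-exact1} producing the algebraic constant $c$, and $G^{(l)}$-equivariance to extract the factor $(\varepsilon_k\ot\chi)(\alpha_x)$. This matches the paper's Step 1 of the proof of Theorem \ref{main-reg} in both structure and detail.
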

Note $(\varepsilon_k\ot\chi)(\alpha_x)\in\ol{\Q}^\times$ unless $x=0$.

\begin{rem}
The constant ``$c$'' in \eqref{reg-exact3} depends on the choice of 
the lifting $\wt{\omega}=t^{lm}\om_k$ (recall from Proposition \ref{pff1-5-1}
that $t^{lm}\om_k$ can be a basis
for infinitely many $m$'s).
However if one chooses $\wt{\omega}$
to be a certain lifting arising from Deligne\rq{}s canonical extension,
then it is proven that $c=0$ (\cite{asakura-otsubo} Appendix).
\end{rem}

\subsection{Proof of Theorem \ref{main-reg} : Step 2 : Contiguous relations of ${}_3F_2$}
\begin{lem}\label{bailey}
If $c+1>a+b$ and $q>0$, we have
\[
{}_3F_2\left(\begin{matrix}
a,b,q\\
c,q+1
\end{matrix};1\right)=
q\Gamma\left({c,c+1-a-b \atop c+1-a,c+1-b}\right)
{}_3F_2\left(\begin{matrix}
1,c+1-a-b,c-q\\
c+1-a,c+1-b
\end{matrix};1\right).
\]
\end{lem}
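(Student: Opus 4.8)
The identity is one of Thomae's transformation formulas for ${}_3F_2$ evaluated at $1$, and the plan is to prove it by first stripping off the redundant parameter pair $(q,q+1)$ and then applying the classical Euler-integral machinery. The key opening observation is that $(q)_n/(q+1)_n=q/(q+n)$ and $1/(q+n)=\int_0^1 x^{q+n-1}\,dx$, so summing termwise (legitimate since $q>0$ and $c+1>a+b$ make everything absolutely convergent, the ${}_2F_1$ being $O((1-x)^{c-a-b})$ near $x=1$) yields
\[
{}_3F_2\left({a,b,q\atop q+1,c};1\right)=q\int_0^1 x^{q-1}\,{}_2F_1\left({a,b\atop c};x\right)dx .
\]
Thus the hypotheses $q>0$ and $c+1>a+b$ are exactly the conditions for this integral to make sense.

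The next step is to insert Euler's integral representation of the ${}_2F_1$, interchange the two integrations, and perform the $x$-integral — which reproduces ${}_2F_1(b,q;q+1;y)$ — then apply Euler's transformation ${}_2F_1(b,q;q+1;y)=(1-y)^{1-b}\,{}_2F_1(1,q+1-b;q+1;y)$, expand the surviving ${}_2F_1$ (its numerator parameter $1$ cancels the $n!$) and integrate termwise by the beta integral. This produces the Thomae-type intermediate identity
\[
{}_3F_2\left({a,b,q\atop q+1,c};1\right)=\frac{\Gamma(c)\,\Gamma(c+1-a-b)}{\Gamma(c-a)\,\Gamma(c+1-b)}\;{}_3F_2\left({1,a,q+1-b\atop q+1,c+1-b};1\right).
\]
Since the new series again carries a numerator parameter $1$ against a denominator parameter $q+1$, the same device applies once more: $(1)_n/(q+1)_n=q\int_0^1 t^n(1-t)^{q-1}\,dt$ collapses it to $q\int_0^1(1-t)^{q-1}{}_2F_1(a,q+1-b;c+1-b;t)\,dt$; applying Euler's transformation to extract a factor $(1-t)^{c-a-q}$ and integrating termwise gives $\frac{q}{c-a}\,{}_3F_2\left({1,c+1-a-b,c-q\atop c+1-a,c+1-b};1\right)$. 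Substituting back and using $(c-a)\Gamma(c-a)=\Gamma(c+1-a)$ yields exactly the asserted formula. (Equivalently one may quote a single Thomae transformation from a table of ${}_3F_2(1)$-relations and merely match parameters; note the target series has parametric excess $q$, so its convergence is again precisely $q>0$.)

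The integral manipulations above are valid on the nonempty open set of parameters for which the Euler integrals converge absolutely (e.g. $c>a>0$, and the analogous inequalities at the second step), and both sides of the claimed identity are meromorphic in $(a,b,c,q)$; hence it extends by analytic continuation to the full stated range. I foresee no genuine analytic obstacle — the only real work is carefully tracking the shifted parameters and the quotients of $\Gamma$-functions through the two transformation steps, and the bookkeeping checks out exactly when $c+1>a+b$ and $q>0$, these being the positivity of the parametric excesses of the source and target ${}_3F_2$'s.
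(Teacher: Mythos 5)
Your argument is correct, and it checks out line by line: the reduction $(q)_n/(q+1)_n=q/(q+n)$ turns the left side into $q\int_0^1x^{q-1}{}_2F_1(a,b;c;x)\,dx$, the first Euler-integral/Euler-transformation pass yields the intermediate identity with prefactor $\Gamma(c)\Gamma(c+1-a-b)/(\Gamma(c-a)\Gamma(c+1-b))$, and the second pass produces the factor $q/(c-a)$, which combines with $\Gamma(c-a)$ to give $\Gamma(c+1-a)$ as required; the parametric excesses of source and target series are $c+1-a-b$ and $q$ respectively, so your hypotheses are exactly the convergence conditions, and the analytic-continuation remark legitimately removes the auxiliary positivity assumptions needed for the Euler integrals. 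The paper, by contrast, proves the lemma in one line by quoting Thomae's two-term transformation, namely Bailey, \emph{Generalized Hypergeometric Series}, Ch.~III, 3.2~(1): with the substitution $(a,b,c,e,f)\mapsto(q,a,b,q+1,c)$ that formula specializes immediately to the stated identity (the factor $q$ arising as $\Gamma(q+1)/\Gamma(q)$). So you have in effect re-derived the relevant special case of Thomae's transformation from scratch --- your closing parenthetical ``one may quote a single Thomae transformation from a table'' is precisely the paper's proof. What your route buys is self-containedness and a transparent view of where the hypotheses $q>0$ and $c+1>a+b$ enter; what the citation buys is brevity and the full four-parameter generality of Thomae's relation.
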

\begin{proof}
Apply \cite{Bailey} Ch. III, 3.2 (1), p.14.
\end{proof}
\begin{lem}[3-term relations]\label{cont1}
Let 
\[
F^q(x):={}_3F_2\left(\begin{matrix}
1,c,q\\
a,b
\end{matrix};x\right), \quad
F_a(x):={}_3F_2\left(\begin{matrix}
1,c,q\\
a,b
\end{matrix};x\right).
\]
Then we have
\begin{multline*}
(a-q-1)(b-q-1)F^q(x)+q(a+b-3-2q-(c-q-1)x)F^{q+1}(x)\\
+q(1+q)(1-x)F^{q+2}(x)=(a-1)(b-1),
\end{multline*}
and
\begin{multline*}
(a-2)(a-1)(1-x)F_{a-2}(x)+(a-1)((2a-c-q-3)x-a+b+1)F_{a-1}(x)\\
-(a-q-1)(a-c-1)xF_a(x)=(a-1)(b-1).
\end{multline*}
In particular, if $a+b>c+q+2$, we have
\begin{align*}
&(a-q-1)(b-q-1)F^q(1)+q(a+b-c-2-q)F^{q+1}(1)=(a-1)(b-1),
\\&
(a-1)(a+b-c-q-2)F_{a-1}(1)
-(a-q-1)(a-c-1)F_a(1)=(a-1)(b-1).
\end{align*}
\end{lem}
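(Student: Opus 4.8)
The statement to prove is Lemma \ref{cont1}, the three-term (contiguous) relations for ${}_3F_2$. The plan is to derive both displayed identities by the classical method of \emph{formal manipulation of hypergeometric series}, and then to obtain the ``In particular'' assertions by specializing $x=1$ and invoking the convergence criterion $\sum\beta_j-\sum\alpha_i>0$ recalled in the Notations.

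First I would treat the relation among $F^q$, $F^{q+1}$, $F^{q+2}$. Writing $F^q(x)=\sum_{n\ge0} u_n(q)x^n$ with
\[
u_n(q)=\frac{(1)_n(c)_n(q)_n}{(a)_n(b)_n\,n!}=\frac{(c)_n(q)_n}{(a)_n(b)_n},
\]
the idea is that the left-hand side is a linear combination, with polynomial-in-$(n,q)$ coefficients, of the series whose $n$-th coefficient telescopes. Concretely, one seeks to write the coefficient of $x^n$ on the left-hand side as $v_{n}-v_{n-1}$ (up to the $(a-1)(b-1)$ constant coming from $n=0$), where $v_n$ is a suitable multiple of $u_n$; the factor $(1-x)$ multiplying $F^{q+2}$ is exactly what produces a difference $u_n(q+2)-u_{n-1}(q+2)$ after shifting indices. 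So the real content is the elementary identity, valid for each $n\ge1$,
\[
(a-q-1)(b-q-1)u_n(q)+q\bigl(a+b-3-2q\bigr)u_n(q+1)-q(c-q-1)u_{n-1}(q+1)
+q(1+q)\bigl(u_n(q+2)-u_{n-1}(q+2)\bigr)=0,
\]
which one checks by expressing every $u_n(q+j)$ in terms of $u_n(q)$ via the ratios $(q)_n/(q)_n$, $(q+1)_n/(q)_n=(q+n)/q$, etc., clearing denominators, and verifying that the resulting polynomial in $n$ (of bounded degree) vanishes identically — a finite check. Alternatively, and more cleanly, I would quote a known contiguous relation for ${}_3F_2$ from the literature (e.g.\ the tables in \cite{Bailey} Ch.~III, as is already done in Lemma \ref{bailey}, or Rainville's list of contiguous function relations) and match parameters; that reduces the proof to a citation plus a parameter dictionary.

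For the second relation, among $F_{a-2}$, $F_{a-1}$, $F_a$, the approach is identical: now the varying parameter sits in a denominator, so $u_n$ involves $1/(a)_n$, and the relevant ratios are $(a-1)_n/(a)_n=(a-1)/(a-1+n)$ and $(a-2)_n/(a)_n=(a-2)(a-1)/((a-2+n)(a-1+n))$. Again the $(1-x)$ factor on $F_{a-2}$ and the bare $x$ on $F_a$ are arranged precisely so that, after clearing the denominators $(a-2+n)(a-1+n)$ and collecting, the coefficient of $x^n$ telescopes, leaving the boundary term $(a-1)(b-1)$ at $n=0$. I would verify this the same way, or cite it.

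Finally, the ``In particular'' statements: under the hypothesis $a+b>c+q+2$ one has $a+b>c+q+1$ and hence the series $F^q(1)$, $F^{q+1}(1)$, $F^{q+2}(1)$, $F_{a-1}(1)$, $F_a(1)$ all converge (the excess of lower over upper parameters is $a+b-1-c-q>1>0$, and likewise with the shifted parameters), so one may simply set $x=1$ in the two general identities; the term $q(1+q)(1-x)F^{q+2}(x)$ and the term $(a-2)(a-1)(1-x)F_{a-2}(x)$ drop out, yielding exactly the two displayed two-term relations. The main obstacle is purely bookkeeping: organizing the telescoping so that the polynomial coefficients are manifestly the right ones, i.e.\ guessing the correct $v_n$ / correct linear combination; once that ansatz is in hand the verification is a routine, bounded-degree polynomial identity in $n$. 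I would minimize the risk of sign errors by cross-checking the general $x$ identities at $x=0$ (both sides equal $(a-1)(b-1)$) and by differentiating once at $x=0$ to test the $n=1$ coefficient before committing to the general $n$ computation.
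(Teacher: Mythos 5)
Your proposal is correct in substance but takes a genuinely different route from the paper. You verify the three-term relations by direct manipulation of the power-series coefficients: writing $F^q(x)=\sum_n u_n(q)x^n$ with $u_n(q)=(c)_n(q)_n/((a)_n(b)_n)$ and checking that the coefficient of $x^n$ on the left-hand side telescopes to zero for $n\ge 1$, the constant $(a-1)(b-1)$ being the $n=0$ term (your cross-check at $x=0$ is exactly this and it does come out to $(a-1)(b-1)$). The paper instead works with the Euler operator $D=x\,d/dx$: it factors the ${}_3F_2$ differential operator as $D\bigl[(D+a-1)(D+b-1)-x(D+c)(D+q)\bigr]$, uses the contiguity $(D+q)F^q=qF^{q+1}$ (resp.\ $(D+a-1)F_a=(a-1)F_{a-1}$), and expands the bracket in the basis $(D+q+1)(D+q),\ (D+q),\ 1$; annihilation by $D$ then forces the resulting combination to be constant. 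The operator method buys you the coefficients $a_1(x),a_2(x),a_3(x)$ systematically (no ansatz to guess) and packages your ``bounded-degree polynomial identity in $n$'' into a single algebraic division of operators; your method is more elementary and equally rigorous once the combination is written down, and your fallback of citing a tabulated contiguous relation is also acceptable. The two are of course two faces of the same computation, since $D$ acts on $x^n$ by $n$.

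One small inaccuracy in your ``In particular'' step: under the hypothesis $a+b>c+q+2$ the series $F^q(1)$, $F^{q+1}(1)$, $F_{a-1}(1)$, $F_a(1)$ do converge, but $F^{q+2}(1)$ and $F_{a-2}(1)$ need not --- their parameter excess is $a+b-c-q-3$, which the hypothesis only forces to exceed $-1$, not $0$. What you actually need is that $(1-x)F^{q+2}(x)\to 0$ as $x\to 1^-$ (and likewise for $F_{a-2}$), which does hold: the coefficients $u_n(q+2)$ are $O(n^{c+q+2-a-b})$ and hence tend to $0$, so $(1-x)\sum u_n x^n = u_0+\sum_{n\ge1}(u_n-u_{n-1})x^n$ tends to $\lim_n u_n=0$ by Abel's theorem applied to the telescoping series. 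With that correction the passage to $x=1$ is sound.
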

\begin{proof}
Let $D=x\frac{d}{dx}$ be the Euler differential operator.
Then $F^q(x)$ is a solution of the differential operator
\[
D(D+a-1)(D+b-1)-x(D+1)(D+c)(D+q)
=D[(D+a-1)(D+b-1)-x(D+c)(D+q)].
\]
On the other hand, one can directly shows
$(D+q)F^q(x)=qF^{q+1}(x)$.
Therefore 
if we write
\[
(D+a-1)(D+b-1)-x(D+c)(D+q)=
a_1(x)(D+q+1)(D+q)+a_2(x)(D+q)+a_3(x)
\]
then we have
\[
D(a_1(x)q(q+1)F^{q+2}+a_2(x)qF^{q+1}(x)+a_3(x)F^q(x))=0
\]
$\Longleftrightarrow$
\[
a_1(x)q(q+1)F^{q+2}+a_2(x)qF^{q+1}(x)+a_3(x)F^q(x)=\mbox{constant.}
\]
We thus obtain the 3-term relation for $F^q$ (details are left to the reader).
Noting $(D+a-1)F_a(x)=(a-1)F_{a-1}(x)$, the 3-term relation for $F_a$ is proven in the same way.
\end{proof}
Lemmas \ref{bailey} and \ref{cont1} immediately imply
\begin{cor}\label{cont2}
For $c+1>a+b$, put
\[
F_c^{a,b,q}:=\Gamma\left({c+1-a,c+1-b\atop c,c+1-a-b }\right)
{}_3F_2\left(\begin{matrix}
a,b,q\\
c,q+1
\end{matrix};1\right).
\]
Then for any rationals $a'\equiv a$, $b'\equiv b$, $c'\equiv c$
and $q'\equiv q$ mod $\Z$, there are rationals $k$, $k'$, $k''$ such that
\[
k F_c^{a,b,q}+k'F_{c'}^{a',b',q'}+k''=0.
\]
\end{cor}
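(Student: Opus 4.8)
The plan is to use Bailey's transformation (Lemma~\ref{bailey}) to strip the $\Gamma$-normalization off each $F_c^{a,b,q}$, turning it into a bare ${}_3F_2$ at $x=1$ whose numerator parameters are $1,\,c+1-a-b,\,c-q$ and denominator parameters $c+1-a,\,c+1-b$, and then to connect the two resulting hypergeometric values by a finite chain of the contiguous relations of Lemma~\ref{cont1}.

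First I would substitute the identity of Lemma~\ref{bailey} into the definition of $F_c^{a,b,q}$. The normalization factor $\Gamma\left({c+1-a,c+1-b\atop c,c+1-a-b}\right)$ and the factor $\Gamma\left({c,c+1-a-b\atop c+1-a,c+1-b}\right)$ produced by Bailey are mutually reciprocal, so they cancel and one is left with
\[
F_c^{a,b,q}=q\,{}_3F_2\left(\begin{matrix}1,\,c+1-a-b,\,c-q\\ c+1-a,\,c+1-b\end{matrix};1\right),
\]
and likewise for the primed data. This step uses $c+1>a+b$ and $q>0$, together with the primed analogues of these inequalities, under which alone $F_{c'}^{a',b',q'}$ is defined; if one wants the conclusion beyond that range, the identities obtained are in any case identities of rational functions in the parameters and extend by analytic continuation, just as the contiguous relations below do.

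Next, observe that both resulting series carry a literal $1$ among their numerator parameters, and the relations of Lemma~\ref{cont1} never move that slot; the remaining numerator parameters $c+1-a-b$, $c-q$ and denominator parameters $c+1-a$, $c+1-b$ are, because $a'\equiv a$, $b'\equiv b$, $c'\equiv c$, $q'\equiv q\pmod{\Z}$, congruent modulo $\Z$ to the corresponding primed ones. Using the symmetry of ${}_3F_2$ in its numerator parameters and in its denominator parameters, the two ``in particular'' identities of Lemma~\ref{cont1} give, for each of these four movable parameters, a two-term relation expressing the value with that parameter shifted by $\pm1$ as a rational multiple of the unshifted value plus a rational constant. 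Composing finitely many such unit shifts, one in each slot as often as needed to pass from the unprimed parameter to the primed one, yields rationals $r,s$ with
\[
{}_3F_2\left(\begin{matrix}1,\,c'+1-a'-b',\,c'-q'\\ c'+1-a',\,c'+1-b'\end{matrix};1\right)=r\,{}_3F_2\left(\begin{matrix}1,\,c+1-a-b,\,c-q\\ c+1-a,\,c+1-b\end{matrix};1\right)+s.
\]
Multiplying by $q'$, rewriting both bare series via the first step, and dividing by $q$ on the right gives $(q'r/q)\,F_c^{a,b,q}-F_{c'}^{a',b',q'}+q's=0$, so one takes $k=q'r/q$, $k'=-1$, $k''=q's$, all in $\Q$.

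The only place that needs real attention — and it is bookkeeping rather than a new idea — is ordering the unit shifts in the last step so that at each stage the factor one must divide by in order to run the relation in the required direction (for instance $(a-q-1)(a-c-1)$ in the second identity of Lemma~\ref{cont1}) is nonzero, and so that every intermediate ${}_3F_2$ still lies in the region of convergence at $x=1$. In the parameter ranges in which this corollary is used both conditions are automatic; in general they follow from the fact that the contiguous relations are identities of rational functions in the parameters, valid by analytic continuation.
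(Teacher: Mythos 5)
Your proposal is correct and is essentially the paper's own argument: the paper proves this corollary with the single line ``Lemmas \ref{bailey} and \ref{cont1} immediately imply,'' i.e.\ exactly your combination of Bailey's transformation (under which the two $\Gamma$-factors cancel) followed by a finite chain of the two-term contiguous relations in the four movable parameter slots. Your added remarks on convergence and on avoiding degenerate coefficients are sensible bookkeeping that the paper leaves implicit.
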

We shall apply Corollary \ref{cont2} to the case $a'=a$, $b'=b$, 
$c'=c$ and $q'=q+1$.
For the later use we write it down explicitly.
\begin{lem}\label{otsubo2}
For any $a,b,c \in\R$ with $c+1>a+b$, put
\[
F(q):={}_3F_2\left({a,b,q\atop c,q+1};1\right).\]
Then, we have
\[
\frac{(q+1-a)(q+1-b)}{q+1}F(q+1)-(q+1-c)F(q)=
\Gamma\left({c, c+1-a-b \atop c-a,c-b}\right).
\]
\end{lem}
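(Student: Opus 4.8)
The plan is to make the combination of Bailey's transformation (Lemma~\ref{bailey}) and the first three-term relation (Lemma~\ref{cont1}) effective in the special case $a'=a$, $b'=b$, $c'=c$, $q'=q+1$ of Corollary~\ref{cont2}, keeping careful track of all the constants. First I would apply Lemma~\ref{bailey} to write, for $q>0$ and $c+1>a+b$,
\[
F(q)=q\,\Gamma\left({c,\,c+1-a-b\atop c+1-a,\,c+1-b}\right)G(q),\qquad
G(q):={}_3F_2\left({1,\,c+1-a-b,\,c-q\atop c+1-a,\,c+1-b};1\right),
\]
together with the analogous formula with $q$ replaced by $q+1$; note that the Gamma factor $\Gamma_0:=\Gamma\left({c,\,c+1-a-b\atop c+1-a,\,c+1-b}\right)$ does not depend on $q$, so $G(q)=F(q)/(q\Gamma_0)$ and $G(q+1)=F(q+1)/((q+1)\Gamma_0)$.

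Next I would substitute the parameters $(c+1-a,\ c+1-b,\ c+1-a-b,\ c-q-1)$ for $(a,b,c,q)$ in the ``$x=1$'' form of the first relation of Lemma~\ref{cont1}, the one for $F^q(1)={}_3F_2\left({1,c,q\atop a,b};1\right)$. A short computation shows that its hypothesis ``$a+b>c+q+2$'' becomes exactly $q>0$ (so it is compatible with the hypothesis of Lemma~\ref{bailey}), that $F^{c-q-1}(1)=G(q+1)$ and $F^{c-q}(1)=G(q)$, and that the three coefficients collapse to $(q+1-a)(q+1-b)$, $(c-q-1)q$ and $(c-a)(c-b)$. This gives $(q+1-a)(q+1-b)G(q+1)+(c-q-1)q\,G(q)=(c-a)(c-b)$. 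Substituting the expressions for $G$ in terms of $F$ and multiplying through by $\Gamma_0$ yields
\[
\frac{(q+1-a)(q+1-b)}{q+1}F(q+1)-(q+1-c)F(q)=(c-a)(c-b)\,\Gamma_0;
\]
since $\Gamma(c+1-a)=(c-a)\Gamma(c-a)$ and $\Gamma(c+1-b)=(c-b)\Gamma(c-b)$, the right-hand side equals $\Gamma\left({c,\,c+1-a-b\atop c-a,\,c-b}\right)$, which is the assertion.

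This argument is valid only for $q>0$, since Lemma~\ref{bailey} requires it, so I would finish by analytic continuation in $q$. For fixed $a,b,c$ with $c+1>a+b$, using $\frac{(q)_n}{(q+1)_n}=\frac{q}{q+n}$ one has $F(q)=1+\sum_{n\ge1}\frac{(a)_n(b)_n}{(c)_n\,n!}\cdot\frac{q}{q+n}$, which converges locally uniformly and is holomorphic in $q$ on a neighbourhood of $[0,\infty)$; hence both sides of the asserted identity are holomorphic there, and the identity, known for $q>0$, extends in particular to $q=0$, the value used in the proof of Proposition~\ref{pff1-5-1}.

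The main obstacle I expect is not conceptual but purely bookkeeping: one must perform the parameter substitution into Lemma~\ref{cont1} without error — in particular matching $F^q(1)$ versus $F^{q+1}(1)$ to $G(q)$ versus $G(q+1)$ correctly, noting that the reindexing $q\mapsto c-q$ reverses the direction of the shift — and check that the three simplified coefficients really come out as $(q+1-a)(q+1-b)$, $(c-q-1)q$ and $(c-a)(c-b)$. Degenerate parameter values (for instance $c-q$ a non-positive integer, where $G$ terminates) are harmless and are absorbed by the analytic-continuation step.
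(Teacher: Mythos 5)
Your proposal is correct and follows exactly the route the paper intends: Lemma \ref{otsubo2} is obtained by specializing Corollary \ref{cont2} to $a'=a$, $b'=b$, $c'=c$, $q'=q+1$, i.e.\ by combining Lemma \ref{bailey} with the first three-term relation of Lemma \ref{cont1}, and your bookkeeping of the coefficients $(q+1-a)(q+1-b)$, $(c-q-1)q$, $(c-a)(c-b)$ and of the Gamma factor checks out. The only addition beyond the paper's implicit argument is your analytic continuation in $q$ to remove the restriction $q>0$ inherited from Lemma \ref{bailey}, which is a sensible refinement since the lemma is invoked at $q=0$ in Proposition \ref{pff1-5-1}.
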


\subsection{Proof of Theorem \ref{main-reg} : Step 3}
We finish the proof of Theorem \ref{main-reg}. 
By Proposition \ref{prop-reg-exact3}
it is enough to show that the integral 
\[\int_\Gamma t^{lm}\om_k\]
in \eqref{reg-exact3}
is a $\ol\Q$-linear combination of the terms \eqref{main-reg-term0} and
\eqref{main-regterm}, and that the coefficient of \eqref{main-regterm} is nonzero.
Put as in Lemma \ref{key-lem}
\[
F_1(t)=t^{\alpha_1}{}_2F_1\left({\alpha_1+\beta_1,\alpha_1+\beta_2\atop1};t\right),\quad 
F_2(t)=t^{\alpha_1}{}_2F_1\left({\alpha_1+\beta_1,\alpha_1+\beta_2,\atop 2\alpha_1+\beta_1+\beta_2};t\right). 
\]
Similarly as in the proof in \S \ref{computing-sect}, we have
\begin{align}
\int_\Gamma t^{lm}\om_k
&\os{\eqref{pff1-1d}}{=}
\int_0^1t^{lm+k-1}q(t^l)\,dt \int_{\gamma_t}\omega^{(l)}\notag\\
&=
\frac{1}{l}\int_0^1t^{m+k/l-1}q(t)f_2(t)\, dt\notag\\
&=\frac{1}{l}
\int_0^1t^{m+k/l-1}q(t)(\lambda_1\theta F_1+\lambda_2 \theta F_2) \,dt
\quad (\mbox{Lemma \ref{key-lem}})\notag\\
&=\frac{\lambda_1}{l}I_m+
\frac{\lambda_2}{l}
\int_0^1t^{m+k/l-1}\bigl(p_0(t)F_2(t)+p_1(t) F'_2(t)\bigr)\,dt\label{pff3-eq1}. 
\end{align}
The integral $I_m$ is computed in Proposition \ref{pff1-5-1}.
Let us compute the second integral in \eqref{pff3-eq1}:
\begin{align*}
J_m:&=\int_0^1t^{m+k/l-1}\bigl(p_0(t)F_2(t)+p_1(t) F'_2(t)\bigr)\,dt\\
&=\int_0^1\bigr(t^{m+k/l-1}p_0(t)-(t^{m+k/l-1}p_1(t))' \bigr)F_2(t)\,dt,
\end{align*}
where the second equality follows from 
$(t-1)|p_1(t)$ as is assumed.
Let $p_0(t)=\sum d_it^i$ and $p_1(t)=\sum d'_it^i$.
We fix a sufficiently large integer $m$ such that $C_m\ne0$ which is defined in
Proposition \ref{pff1-5-1}.
Put 
\[
K_n:=\int_0^1t^{k/l+n-1} F_2(t)\,dt
=\int_0^1t^{\a_1+k/l+n-1} {}_2F_1\left({\a_1+\b_1,\a_1+\b_2 \atop
2\a_1+\b_1+\b_2};t\right) \,dt.\]
Then 
$J_m$ is a linear combination of $K_n$'s:
\begin{equation}\label{jm1}
J_m=\sum_{i\geq -1}(d_i-(m+k/l+i)d'_{i+1})K_{m+i},\quad (d_{-1}:=0).
\end{equation}

\begin{lem}\label{otsubo1}
Put $a:=\alpha_1+\beta_1$, $b:=\alpha_1+\beta_2$ and $q=k/l$. 
Then we have
\[
K_n=(q+\a_1+n)^{-1}\cdot {}_3F_2\left(\begin{matrix}a,b,
q+\a_1+n\\a+b,q+\a_1+n+1\end{matrix};1
\right).
\]
\end{lem}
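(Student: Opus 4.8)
The plan is to recognize $K_n$ as a direct instance of the Euler-type integral representation \eqref{integral F} of ${}_3F_2$. First I would use the identity $2\alpha_1+\beta_1+\beta_2=a+b$, where $a=\alpha_1+\beta_1$ and $b=\alpha_1+\beta_2$, to rewrite
\[
K_n=\int_0^1 t^{\alpha_1+q+n-1}\,{}_2F_1\left({a,b\atop a+b};t\right)dt .
\]
This has exactly the shape of the left-hand side of \eqref{integral F} evaluated at $x=1$: the upper parameters are $a,b$, the lower parameter is $d=a+b$, the power of $t$ forces $c:=\alpha_1+q+n$, and there is no factor $(1-t)$, so $e-c-1=0$, i.e. $e=c+1$.

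Next I would substitute these values into \eqref{integral F}, which yields
\[
K_n=\Gamma\left({c,1\atop c+1}\right)\,{}_3F_2\left({a,b,c\atop a+b,c+1};1\right),
\]
and then simplify the gamma-factor: since $\Gamma(1)=1$ and $\Gamma(c+1)=c\,\Gamma(c)$, one has $\Gamma\left({c,1\atop c+1}\right)=1/c=(\alpha_1+q+n)^{-1}$, which is precisely the claimed formula. Alternatively, the same identity follows by integrating the power series of ${}_2F_1$ term by term, using $\int_0^1 t^{c+k-1}\,dt=(c+k)^{-1}$ together with $(c)_k/\!\left(c\,(c+1)_k\right)=(c+k)^{-1}$; this term-by-term argument also covers the mildly degenerate case $d=a+b$, in which the inner ${}_2F_1$ acquires a logarithmic singularity at $t=1$.

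I do not expect any real obstacle here; the only points needing a word of justification are convergence issues. The ${}_3F_2$ on the right converges at $x=1$ because the excess of lower over upper parameters is $(a+b)+(c+1)-a-b-c=1>0$; the integral defining $K_n$ converges at $t=1$ because ${}_2F_1\left({a,b\atop a+b};t\right)$ grows only like $\log(1-t)$ there, and at $t=0$ it converges provided $\mathrm{Re}(\alpha_1+q+n)>0$, which is automatic in our application since $K_n$ is used only for $n=m+i$ with $m\gg 0$ in \eqref{jm1} (in general one may extend the identity to all $n$ by analytic continuation in $n$). Thus the lemma is essentially a one-line unwinding of \eqref{integral F} once the lower parameter of the inner ${}_2F_1$ is written as $a+b$.
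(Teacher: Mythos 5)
Your proposal is correct and is exactly the paper's argument: the authors' entire proof reads ``Straightforward from \eqref{integral F}'', i.e.\ the same specialization $d=a+b$, $c=\alpha_1+q+n$, $e=c+1$, $x=1$, with $\Gamma\left({c,1\atop c+1}\right)=1/c$. Your added remarks on convergence and the term-by-term alternative are sound but not needed beyond what the paper implicitly assumes.
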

\begin{proof}
Straightforward from \eqref{integral F}.
\end{proof}
Lemmas \ref{otsubo2} and \ref{otsubo1} yield that
there are $p_n,p\rq{}_n\in \Q$
such that
\begin{equation}\label{jm2}
B(a,b)K_n=p_nB(a,b){}_3F_2\left(\begin{matrix}a,b,\alpha_1+q\\
a+b,\alpha_1+q+1\end{matrix};1
\right)+p'_n
\end{equation}
Using $\alpha_2=1-(\alpha_1+\beta_1+\beta_2)$,
the rational number $p_n$ is given by 
\begin{align*}
p_n&=(q+\alpha_1+n)^{-1}\prod_{k=1}^n
\frac{(q+\alpha_1+k)(q+\alpha_2+k-1)}{(q-\beta_1+k)(q-\beta_2+k)}
\\&=(q+\alpha_1)^{-1}
\frac{(q+\alpha_1)_n(q+\alpha_2)_n}{(q-\beta_1+1)_n(q-\beta_2+1)_n}.
\end{align*}
We thus have $p_n=(q+\a_1)^{-1}a_n$ where $a_n$ is the constant in \eqref{pff1-5-1-a}. 
Hence
\begin{align*}
B(a,b)J_m
&=(q+\alpha_1)^{-1}\left(\sum_{i\geq -1}(d_i-(m+k/l+i)d'_{i+1})a_{m+i}\right)
\\&\qquad \cdot  B(a,b){}_3F_2\left(\begin{matrix}a,b,\alpha_1+q\\
a+b,\alpha_1+q+1\end{matrix};1
\right)+p''_m\\
&=C_mB(a,b){}_3F_2\left(\begin{matrix}a,b,\alpha_1+q\\
a+b,\alpha_1+q+1\end{matrix};1
\right)+p''_m
\end{align*}
for some $p''_m\in\ol\Q$ where $C_m$ is the constant in \eqref{pff1-5-1-C}.
Since $\lambda_2\ne0\in \ol{\Q}\cdot B(a,b)$ by Key Lemma 3 (Lemma \ref{key3}), 
we have
\begin{align*}
\int_\Gamma t^{lm}\om_k
&= \frac{\lambda_1}{l}I_m+\frac{\lambda_2}{l}J_m\\
&= \frac{\lambda_1}{l}I_m+\frac{\lambda_2}{l}\left(
C_m\cdot{}_3F_2\left(\begin{matrix}a,b,\alpha_1+q\\
a+b,\alpha_1+q+1\end{matrix};1
\right)+C'_mB(a,b)^{-1}\right)\\
&=
\frac{\lambda_1}{l}I_m+c_1+c_2 
B(a,b)~{}_3F_2\left(\begin{matrix}a,b,\alpha_1+q\\
a+b,\alpha_1+q+1\end{matrix};1
\right)
\end{align*}
for some $c_1\in\ol\Q$ and $c_2\in\ol\Q^\times$.
The third term appears as \eqref{main-regterm}.
Again, by Key Lemma 3 (Lemma \ref{key3}) and Proposition \ref{pff1-5-1},
the first term $\lambda_1I_m/l$ appears as 
the second term in \eqref{main-reg-term0}.
So we are done.


\begin{thebibliography}{AAAi}

\bibitem{anderson} Anderson, G. W.: {\it Logarithmic derivatives of Dirichlet $L$-functions and the periods of Abelian varieties}, Compositio Math. {\bf 45}, Fasc. 3 (1982), 315-332. 

\bibitem{formula}
Asakura, M.:
{\it A formula for Beilinson's regulator map
on $K_1$ of a fibration of curves having a totally degenerate 
semistable fiber}.
arXiv:1310.2810.

\bibitem{asakura-otsubo}
Asakura, M. and Otsubo, N.: {\it CM periods, CM regulators and hypergeometric functions, I.} arXiv:1503.07962. 

\bibitem{sato}
Asakura, M. and Sato, K.:
{\it Chern class and Riemann-Roch theorem for cohomology without
homotopy invariance.} arXiv:1301.5829.

\bibitem{barth}
Barth, W., Peters, C., Van de Ven, A. 
{\it Compact complex surfaces.}
Springer-Verlag, Berlin. 


\bibitem{Bailey}
Bailey, W.N.: Generalized Hypergeometric series.
 Cambridge Tracts in Mathematics and Mathematical Physics, No. 32 Stechert-Hafner, Inc., New York 1964 v+108 pp. 

\bibitem{beilinson}
A. Beilinson: {\it Notes on absolute Hodge cohomology}.
In: Appl. of Alg. K-theory to
Alg. Geometry and Number Theory, Contemp. Math {\textbf 55}, 35--68 (AMS 1986).

\bibitem{BBDG}
A. Beilinson, J. Bernstein, P. Deligne: {\it Faisceaux pervers} In: Analyse et topologie
sur les espaces singuliers I, Ast\'erisque {\textbf 100} (1982)
\bibitem{HodgeII}
Deligne, P.:
{\it Th\'eorie de Hodge II.}
Publ. Math. IHES. {\bf 40} (1972) 5-57.

\bibitem{Bateman}A. Erd\'elyi et al. ed., {\it Higher transcendental functions}, Vol. 1, California Inst. Tech, 1981. 
\bibitem{gross} Gross, B. H. (with an appendix by Rohrlich, D. E.):{\it  On the periods of Abelian integrals and a formula of Chowla-Selberg}, Invent. Math. {\bf 45} (1978), 193-211. 

\bibitem{hoffman}
 Hoffman, J.: 
 {\it The Hodge theory of stable curves}. 
 Mem. Amer. Math. Soc.  {\bf 51}  (1984),  no. 308
 
\bibitem{lerch} Lerch, M.: {\it Sur quelques formules relatives au nombre des classes}, Bull. Sci. Math. {\bf 21} (1897), prem. partie, 290-304.

\bibitem{otsubo1}
Otsubo, N.: {\it On the regulator of Fermat motives and generalized hypergeometric functions}. 
J. reine angew. Math. {\bf 660} (2011), 27-82.  


\bibitem{msaito1}
Saito, M.:
{\it Modules de Hodge polarisables.} 
Publ. Res. Inst. Math. Sci.  24 (1988) no. 6, 849--995.
\bibitem{msaito2}
Saito, M.:
{\it Mixed Hodge modules}. Publ. Res. Inst. Math. Sci.  26  (1990),  no. 2, 221--333. 
\bibitem{Schmid}
Schmid, W.:
{\it Variation of Hodge structure: the singularities of the period mapping.} 
Invent. Math. 22 (1973), 211--319.

 
\bibitem{slater} Slater, L. J.: {\it Generalized hypergeometric functions}, Cambridge
Univ. Press, Cambridge 1966. 

\bibitem{shimura} Shimura, G.: {\it Automorphic forms and periods of abelian varieties}, J. Math. Soc. Japan {\bf 31} (1979), 561-592.  
 
\bibitem{steenbrink}
Steenbrink, J.:   
{\it Limits of Hodge structures} Invent. Math. 31 (1976), no. 3, 229--257.
\bibitem{zucker-ann}
Zucker, S.:
{\it Hodge theory with degenerating coefficients. 
$L_2$ cohomology in the Poincar\'e metric.} 
Ann. of Math. (2)  109  (1979), no. 3, 415--476.
\end{thebibliography}
\end{document}